\DeclareMathOperator{\bbE}{\mathbb{E}}
\DeclareMathOperator{\bbJ}{\mathbb{J}}
\DeclareMathOperator{\bbL}{\mathbb{L}}
\DeclareMathOperator{\bbN}{\mathbb{N}}
\DeclareMathOperator{\bbP}{\mathbb{P}}
\DeclareMathOperator{\bbQ}{\mathbb{Q}}
\DeclareMathOperator{\bbR}{\mathbb{R}}
\DeclareMathOperator{\bbZ}{\mathbb{Z}}
\DeclareMathOperator{\bfE}{\mathbf{E}}
\DeclareMathOperator{\bfI}{\mathbf{I}}
\DeclareMathOperator{\bfJ}{\mathbf{J}}
\DeclareMathOperator{\bfL}{\mathbf{L}}
\DeclareMathOperator{\bfP}{\mathbf{P}}
\DeclareMathOperator{\bfa}{\mathbf{a}}
\DeclareMathOperator{\bfb}{\mathbf{b}}
\DeclareMathOperator{\sM}{\mathcal{M}}
\DeclareMathOperator{\expm}{exp}
\DeclareMathOperator{\lc}{\lceil}
\DeclareMathOperator{\rc}{\rceil}
\DeclareMathOperator{\lf}{\lfloor}
\DeclareMathOperator{\rf}{\rfloor}
\DeclareMathOperator{\Var}{Var}
\theoremstyle{plain}
\newtheorem{thm}{Theorem}[section]
\newtheorem{prop}[thm]{Proposition}
\newtheorem{lem}[thm]{Lemma}
\newtheorem{cor}[thm]{Corollary}
\theoremstyle{definition}
\theoremstyle{remark}
\newtheorem{ex}[thm]{Example}
\DeclareMathOperator{\qmeas}{\bfP_{\bfa, \bfb}}
\DeclareMathOperator{\ameas}{\bbP}
\DeclareMathOperator{\aexp}{\bbE}
\DeclareMathOperator{\qexp}{\bfE_{\bfa,\bfb}}
\DeclareMathOperator{\qzexp}{\bfE_{\bfa,\bfb}^z}
\DeclareMathOperator{\E}{E}
\DeclareMathOperator{\En}{H}
\newcommand{\ubar}[1]{\underaccent{\bar}{#1}}
\DeclareMathOperator{\zs}{z_{\star}}
\DeclareMathOperator{\lam}{\lambda_{\star}}
\numberwithin{equation}{section}
\title{Large deviations for some corner growth models with inhomogeneity}
\author{Elnur Emrah\thanks{\footnotesize{Department of Mathematical Sciences, Carnegie Mellon University.\ Pittsburgh, PA, USA.\newline \href{mailto:eemrah@andrew.cmu.edu}{eemrah@andrew.cmu.edu}}}
\and
Chris Janjigian\thanks{Laboratoire de Probabilit\'es et Mod\`eles Al\'eatoires, Universit\'e Paris Diderot - Paris 7. Paris, France. \newline\href{mailto:cjanjigian@math.univ-paris-diderot.fr}{cjanjigian@math.univ-paris-diderot.fr}}\ \footnotemark[2]}
\begin{document}

\maketitle 
\abstract{We study an inhomogeneous generalization of the classical corner growth in which the weights are exponentially distributed with random parameters. Our interest is in the large deviation properties of the last passage times. We obtain tractable variational representations of the right tail large deviation rate functions in both the quenched and annealed settings and estimates for left tail large deviations. We also compute expansions of the right tail rate functions near the shape function, which are consistent with the expectation of KPZ type fluctuations in an appropriate regime.}

\noindent \textbf{Keywords:} corner growth model; 
directed last-passage percolation; TASEP; exactly solvable models; large deviations; rate functions
 
\noindent \textbf{AMS MSC 2010:} 60K35; 60K37.

\section{Introduction}
\label{S1}
Let $W = \{W(i, j): i, j \in \bbN\}$ be a collection of nonnegative real random variables (weights) with joint distribution $P$. We consider a growing random subset of $\bbN^2$.   Initially, this subset is empty and the growth rule is as follows: at time zero, we start a countdown of length $W(1,1)$; when this countdown ends, we add $(1,1)$ to the subset. This process then iterates: once the bottom and left neighbors (if they exist) of a site $(i,j)$ have been added, a countdown of length $W(i,j)$ begins; when it ends, $(i,j)$ joins the subset.

Our interest is in the last-passage times $G=\{G(i, j): i, j \in \bbN\}$, defined recursively by  
\begin{equation}
\begin{aligned}
\label{E57}
G(i, j) = G(i-1, j) \vee G(i, j-1) + W(i, j) \quad G(i, 0) = G(0, j) = 0 \qquad \text{ for } i, j \in \bbN. \\
\end{aligned}
\end{equation}
These random variables encode the evolution of the set described above in the sense that $(i, j) \in \bbN^2$ is added to the set at time $t = G(i, j)$. They can also be viewed as describing a directed last-passage percolation model since (\ref{E57}) is equivalent to 
\begin{equation}\label{E27}
G(m, n) = \max \limits_{\pi \in \Pi_{(1, 1), (m, n)}} \sum \limits_{(i, j) \in \pi} W(i, j),
\end{equation}
where $\Pi_{(k, l), (m, n)}$ is the set all directed paths from $(k, l)$ to $(m, n)$. A directed path $\pi \in \Pi_{(k, l), (m, n)}$ is a finite sequence $(u_i, v_i)_{i \in [p]}$ in $\bbZ^2$ such that $(u_1, v_1) = (k, l)$, $(u_p, v_p) = (m, n)$ and $(u_{i+1}-u_{i}, v_{i+1}-v_i) \in \{(1, 0), (0, 1)\}$ for $1 \le i < p$.

The corner growth model also maps to a generalization of the totally asymmetric exclusion process (TASEP) on $\bbZ$ started from step initial conditions. In this interpretation, we begin with particles at the sites $i < 0$ and holes at the sites $i \geq 0$. We label the particles with $i \in \bbN$, counting from right to left and holes with $j \in \bbN$ from left to right. In the dynamics, particle $i$ and hole $j$ interchange at time $G(i,j)$. If we denote the position of particle $i$ at time $t$ by $\sigma(i,t)$ then we have
\begin{align}
\sigma(i,t) = -i + \max\{j \in \bbN : G(i,j) \leq t\} \label{eq:particle}
\end{align} 
If the weights are i.i.d. with geometric or exponential marginals, this process is the usual TASEP run in discrete or continuous time, respectively \cite[p. 5]{Seppalainen}.

These and related models have received substantial research attention in the past two decades, partially in connection with KPZ universality. See the surveys \cite{Corwin,Martin}.  When $P$ is i.i.d. with geometric or exponential marginals, it has been possible to compute various statistics of the last-passage times. For example
\begin{equation}
\label{E58}
\lim \limits_{n \rightarrow \infty} \frac{G(\lf ns \rf, \lf nt \rf)}{n} = m(s+t) + 2\sigma \sqrt{st} \quad \text{ for } s, t > 0 \ P\text{-a.s.}, \end{equation}
where $m$ and $\sigma^2$ are the common mean and the variance of the weights. The exponential case of (\ref{E58}) was first proved in \cite{Rost} and the geometric case appeared in \cite{CohnElkiesPropp, JockuschProppShor, Seppalainen2}. Large deviation principles for the last-passage times were derived in \cite{Johansson, Seppalainen3}. These papers identified the right-tail rate function and the correct decay rate for both the right and left tails. It is also established in \cite{Johansson} that the model exhibits KPZ statistics; the fluctuations around the limit in (\ref{E58}) are of order $n^{1/3}$ and appropriately rescaled last-passage times converge to the Tracy-Widom GUE distribution. 

This paper concerns an inhomogeneous generalization of the classical i.i.d. exponential model. Given parameter sequences $\bfa = (a_n)_{n \in \bbN}$ and $\bfb = (b_n)_{n \in \bbN}$ taking values in $(0, \infty)$, we define a measure under which the weights are independent and $W(i, j)$ is exponentially distributed with mean $(a_i+b_j)^{-1}$. We state the assumptions of our model precisely in Section \ref{S3}, but for the moment we will outline our results in the case that $\bfa$ and $\bfb$ are independent i.i.d. sequences which are bounded away from zero and have finite means. We refer to the model where we condition on these random sequences as the quenched model. In the framework of particle systems, the quenched model corresponds to an inhomogeneous continuous time TASEP with particlewise and holewise inhomogeneity. The annealed measure is constructed by averaging the family of quenched measures over the joint distribution of $(\bfa,\bfb)$. Weights in the quenched model are independent, but not identically distributed in general; in the annealed model, the weights are identically distributed but correlated along rows and columns.

For the models we consider, the almost sure limit of $n^{-1} G(\lf ns \rf, \lf nt \rf)$ is deterministic and can be characterized as the solution of a certain variational problem, see \cite{Emrah}. With some choices of the marginal distributions of $\bfa$ and $\bfb$, this family of models has a feature not present in the previous exactly solvable corner growth models: the presence of linear segments of the shape function. This is of some interest because different fluctuation exponents are expected in these different regions if the weights are weakly correlated. For a particle system perspective on this phenomenon in the case where $\bfa$ is almost surely constant and $\bfb$ is i.i.d. and bounded, see \cite{KrugSepp}.

The present paper is devoted to the question of large deviations corresponding (\ref{E58}). In the quenched setting, we are able to prove a large deviation principle with rate $n$ and a rate function given by the solution to a reasonably tractable variational problem. With certain choices of the weights and in certain directions, we provide some explicit formulas for these rate functions.

In the annealed setting, we have a variational expression for the right tail rate function which is similar to the variational expression in the quenched setting, though we no longer have any non-trivial explicitly computable examples. Deviations to the right of the shape function in the annealed model are connected to deviations in the quenched model through a variational problem involving relative entropy. Heuristically, these deviations should arise from perturbations of $(\bfa,\bfb)$ combined with deviations in the quenched model with these perturbed parameters. There are rate $n$ annealed large deviations to the left of the shape function. This is in contrast to the i.i.d. models, where the rate is $n^2$ \cite{Johansson, Seppalainen3}. We show that this occurs by using the fact that it is possible to see a finite entropy deviation of the (order $n$ many) parameters $\{a_i\}_{i=1}^{\lf ns \rf}$ and $\{b_j\}_{j=1}^{\lf nt \rf}$ which affect the distribution of $G(\lf ns \rf, \lf nt \rf)$ and makes the shape function smaller.

We identify the expansions of both the quenched and annealed rate functions near the shape function. In the quenched model, for directions in which the shape function is strictly concave, these expansions are heuristically consistent with the expectation of Tracy-Widom GUE fluctuations. Fluctuation results for an inhomogeneous version of the closely related Sepp\"al\"ainen-Johansson model (oriented digital boiling) were previously obtained in a series of papers by Gravner, Tracy, and Widom \cite{GTW01,GTW02b,GTW}. The question of what happens at the interface of the linear and concave regions was left open in those papers. At the interface of the linear and concave regions for the models we study, our results suggest KPZ type fluctuations under a moment condition. We elaborate on this connection in the next section.

To prove the variational formulas for the right tail rate functions, we follow an approach introduced in \cite{Seppalainen4} and applied in \cite{GeorgSepp, Janjigian, Seppalainen3}. The key technical condition making this scheme tractable is an analogue of Burke's theorem from queueing theory, which in this setting corresponds to the existence of a stationary version of the model, as discussed in Proposition \ref{P2}. We expect that the techniques employed in this paper could be used to obtain similar results in inhomogeneous versions of other models with the Burke property, such as the log gamma polymer \cite{Sep12}, the strict-weak polymer \cite{CSS14} and the corner growth model with geometric weights \cite{Emrah}.

The principal contributions of this paper are as follows. To the best of our knowledge, this is the first inhomogeneous model in the KPZ class for which exact large deviation rate functions have been computed. The rate functions we obtain in both the quenched and annealed settings are tractable. For general choices of the distributions of the sequences $(\bfa,\bfb)$, we identify the asymptotic rate that the right tail rate function tends to zero near the shape function, suggesting KPZ type fluctuations for the quenched model in appropriate directions. In particular, our results suggest a partial answer to the problem of what type of fluctuations to expect at the interface of the linear and concave regions. We further connect our quenched and annealed rate functions through a natural variational problem involving relative entropy.

The paper is organized as follows. In Section \ref{S3}, we define the model precisely and state our results. The remaining sections are devoted to proofs. In Section \ref{S2}, we discuss the stationary model and compute the Lyapunov exponents of the last passage times.  In Section \ref{Sreg}, we study the extremizers of the variational problems for the rate functions and Lyapunov exponents. We then estimate the probability of left tail large deviations in Section \ref{S4}. In Section \ref{S7} we show that the Legendre-Fenchel transform of the right tail rate function is given by the previously computed Lyapunov exponents. These results are combined to prove the large deviation principle for the quenched model.  In Section \ref{S8}, we note that the extremizers for the annealed model are connected to the extremizers for a quenched model with different parameters, which gives a variational connection between the quenched and annealed rate functions. Understanding of the extremizers also allows us to prove the scaling estimates in Section \ref{S10}. We include the standard subadditivity arguments showing existence and regularity of the Lyapunov exponents and right tail rate functions in Appendix \ref{A}.

\noindent \textbf{Notation}.
For real numbers $a,b$, we denote $\max(a,b) = a\vee b$ and $\min(a,b) = a \wedge b$. We take the convention that $\bbN = \{n \in \bbZ: n > 0\}$ and $\bbR_+ = \{x \in \bbR : x > 0\}$. For $D \subset \bbR$, we denote by $\sM_1(D)$ the collection of probability measures on $D$. For $\eta \in \sM_1(D)$, we use the notation $\ubar{\eta} = \text{ess-inf}\{\eta\}$ and $\bar{\eta} = \text{ess-sup}\{\eta\}$.

Given $\nu,\mu \in \sM_1(D)$, the relative entropy is defined by $\En (\nu|\mu) = \E^\nu \log \frac{d\nu}{d\mu}$ if $\nu$ is absolutely continuous with respect to $\mu$ and $\infty$ otherwise. See for example the discussions in \cite{DemboZeitouni} and \cite{Rassoul-AghaSeppalainen} for basic properties of the relative entropy. We denote absolute continuity of $\nu$ with respect to $\mu$ by $\nu \ll \mu$. For probability measures $\nu,\mu$ with $\nu \ll \mu$, we write $\frac{d\nu}{d\mu}(x) \simeq f(x)$ if $\nu(dx) = [\int f(x) \mu(dx)]^{-1} f(x) \mu(dx)$. For a probability measure $\mu$ on $\bbR_+$, define
\begin{align}\label{Mmu}
\sM^\mu &= \{\nu \in \sM_1(\bbR_+) : \En (\nu | \mu) < \infty\}
\end{align}
and note that for each $\mu$, $\sM^\mu$ is a convex set by convexity of $\En (\nu|\mu)$.

For $f:\mathbb{R} \to (-\infty,\infty]$, $f^{\star}(\xi) = \sup_{x \in \mathbb{R}}\{x \xi - f(x)\}$ defines the Legendre-Fenchel transform. We refer the reader to \cite{Rockafellar} for basic properties of this transform.

\noindent \textbf{Acknowledgements} The authors are grateful to Timo Sepp\"al\"ainen for many helpful conversations. The authors would also like to thank the Department of Mathematics at the University of Wisconsin--Madison, where much of this work was completed.

\section{Model and results}
\label{S3}
\subsection{Model}
Denote by $W(i, j)$ the projection $\bbR_+^{\bbN^2} \rightarrow \bbR_+$ onto the coordinate $(i, j)$ for $i, j \in \bbN$. For any sequences $\bfa=(a_1,a_2,\dots),\bfb=(b_1,b_2,\dots)$ taking values in $\bbR_+$, we define $\bfP_{\bfa, \bfb}$ to be the product measure on $\bbR_+^{\bbN^2}$ satisfying 
\begin{align*}
\bfP_{\bfa, \bfb}(W(i, j) \ge x) = e^{-(a_i+b_j)x} \quad \text{for } i, j \in \bbN \text{ and } x \ge 0.
\end{align*}
We will draw the sequences $(\bfa,\bfb)$ randomly from a distribution $\mu$ on $\bbR_+^{\bbN} \times \bbR_+^{\bbN}$. For $k \in \bbZ_+$, let $\tau_k$ denote the shift $(c_n)_{n \in \bbN} \mapsto (c_{n+k})_{n \in \bbN}$. In all of the results that follow, we make the following assumptions on $(\bfa,\bfb)$. We assume that $\bfa$ and $\bfb$ are stationary sequences under $\mu$. We assume further that $\mu$ is separately ergodic with respect to $\tau_k \times \tau_l$ for $k, l \in \bbN$. This means that if $k, l \in \bbN$ and $B \subset \bbR_+^{\bbN} \times \bbR_+^{\bbN}$ is a Borel set with $(\tau_k \times \tau_l)^{-1}(B) = B$ then $\mu(B) \in \{0, 1\}$.  

The annealed distribution $\bbP$ is given by $\ameas(B) = \E \left[\qmeas(B)\right]$ for any Borel set $B \subset \bbR_+^{\bbN^2}$, where $\E$ is the expectation under $\mu$. Let $\bfE_{\bfa, \bfb}$ and $\bbE$ denote the expectations under $\bfP_{\bfa, \bfb}$ and $\bbP$, respectively. We denote by $\alpha$ and $\beta$ the distributions of $a_1$ and $b_1$ and take the convention that $a$ and $b$ are random variables with distributions $\alpha$ and $\beta$ respectively. In all of the following results, we will assume that $\E[a+b] < \infty$ and $\ubar{\alpha}+\ubar{\beta} > 0$. Finally, all large deviation results under $\ameas$ are limited to the case in which $\bfa$ and $\bfb$ are independent i.i.d. sequences.

We will also consider a `stationary' model defined on the extended sample space $\bbR_+^{\bbZ_+^2}$. Each weight $W(i, j)$ is now redefined as the projection onto coordinate $(i, j)$ for $i, j \in \bbZ_+^2$. Introduce the last-passage times 
\begin{align}
\label{E52}
\widehat{G}(m, n) = \max_{\pi \in \Pi_{(0, 0), (m, n)}} \sum_{i, j \in \pi} W(i, j) \quad \text{ for } m, n \in \bbZ_+.\end{align} For sequences $\bfa$ and $\bfb$ in $(0, \infty)$ and $z \in (-\ubar{\alpha}, \ubar{\beta})$, define the product measure $\bfP_{\bfa, \bfb}^z$ on $\bbR_+^{\bbZ_+^2}$ by 
\begin{equation}
\label{E30}
\begin{aligned}
&\bfP_{\bfa, \bfb}^z(W(i, j) \ge x) = \exp(-(a_i+b_j)x) \qquad \bfP_{\bfa, \bfb}^z(W(0, 0) = 0) = 1 \\
&\bfP_{\bfa, \bfb}^z(W(i, 0) \ge x) = \exp(-(a_i+z)x) \qquad \ \bfP_{\bfa, \bfb}^z(W(0, j) \ge x) = \exp(-(b_j-z)x)
\end{aligned}
\end{equation}
for $i, j \in \bbZ_+$ and $x \ge 0$. We will use definition (\ref{E30}) for $z = -\ubar{\alpha}$ when $a_i > \ubar{\alpha}$ for $i \in \bbN$ and for $z = \ubar{\beta}$ when $b_j > \ubar{\beta}$ for $j \in \bbN$. The utility of these measures is that the last-passage increments given by $I(m, n) = \widehat{G}(m, n)-\widehat{G}(m-1, n)$ for $m \ge 1, n \ge 0$ and
$J(m, n) = \widehat{G}(m, n)-\widehat{G}(m, n-1)$ for $m \ge 0, n \ge 1$ are stationary in the following sense. 
\begin{prop}[Proposition 4.1 in \cite{Emrah}]
\label{P2}
Let $k, l \in \bbZ_+$. Under $\bfP_{\bfa, \bfb}^z$,  
\begin{enumerate}[(a)]
\item $I(i, l)$ has the same distribution as $W(i, 0)$ for $i \in \bbN$. 
\item $J(k, j)$ has the same distribution as $W(0, j)$ for $j \in \bbN$. 
\item The random variables $\{I(i, l): i > k\} \cup \{J(k, j): j > l\}$ are jointly independent. 
\end{enumerate}
\end{prop}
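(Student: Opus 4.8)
\ The plan is to run the classical Burke-type argument: propagate a ``stationary interface'' of last-passage increments across a sequence of down-right lattice paths, starting from the coordinate axes, using a single-vertex identity together with the memoryless property of the exponential distribution.

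The algebraic engine is the following. Fix $m,n\ge 1$ and subtract $\widehat G(m-1,n-1)$ from the recursion $\widehat G(m,n)=W(m,n)+\widehat G(m-1,n)\vee\widehat G(m,n-1)$. Writing $I_{\mathrm{in}}=I(m,n-1)$, $J_{\mathrm{in}}=J(m-1,n)$, $I_{\mathrm{out}}=I(m,n)$, $J_{\mathrm{out}}=J(m,n)$ and $W=W(m,n)$, this yields
\[
I_{\mathrm{out}}=(I_{\mathrm{in}}-J_{\mathrm{in}})^+ + W,\qquad J_{\mathrm{out}}=(J_{\mathrm{in}}-I_{\mathrm{in}})^+ + W .
\]
The probabilistic input is the elementary lemma: if $I_{\mathrm{in}},J_{\mathrm{in}},W$ are independent with $I_{\mathrm{in}}\sim\mathrm{Exp}(a_m+z)$, $J_{\mathrm{in}}\sim\mathrm{Exp}(b_n-z)$ and $W\sim\mathrm{Exp}(a_m+b_n)$, then, because $(a_m+z)+(b_n-z)=a_m+b_n$, the outputs $I_{\mathrm{out}},J_{\mathrm{out}}$ are \emph{independent} with $I_{\mathrm{out}}\sim\mathrm{Exp}(a_m+z)$ and $J_{\mathrm{out}}\sim\mathrm{Exp}(b_n-z)$. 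I would prove this by putting $C=W$ and $D=I_{\mathrm{in}}-J_{\mathrm{in}}$, noting $C\perp D$ and $I_{\mathrm{out}}=C+D^+$, $J_{\mathrm{out}}=C+(-D)^+$, using that $D$ restricted to $\{D>0\}$ is $\mathrm{Exp}(a_m+z)$ carrying mass $(b_n-z)/(a_m+b_n)$ (and symmetrically on $\{D<0\}$), and then computing $\qzmeas(I_{\mathrm{out}}>s,\,J_{\mathrm{out}}>t)=e^{-(a_m+z)s}e^{-(b_n-z)t}$ by a short case split on the sign of $D$.

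With these in hand I would argue by induction over down-right paths. Call a bi-infinite lattice path $\gamma$ in $\bbZ_+^2$ using unit South and East steps \emph{admissible} if it is obtained from the axis path $\gamma_0=\{(0,j):j\ge 0\}\cup\{(i,0):i\ge 0\}$ by finitely many \emph{corner moves}, a corner move replacing a South-then-East corner of the current path at a vertex $(m-1,n-1)$ by the East-then-South corner at $(m,n)$ --- equivalently, applying the recursion once at $(m,n)$. The inductive claim is: for every admissible $\gamma$, the increments of $\widehat G$ along the edges of $\gamma$ form a jointly independent family in which a horizontal edge lying in column $i$ carries an $\mathrm{Exp}(a_i+z)$ variable and a vertical edge lying in row $j$ an $\mathrm{Exp}(b_j-z)$ variable, and this family is jointly independent of $\{W(i,j):(i,j)\text{ lies strictly to the north-east of }\gamma\}$. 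For $\gamma_0$ this is immediate: under $\qzmeas$ the axis increments are $I(i,0)=W(i,0)$ and $J(0,j)=W(0,j)$ (using $W(0,0)=0$), and $\qzmeas$ is the product of the prescribed exponentials. For the inductive step, a corner move at $(m,n)$ changes only the two edges adjacent to the corner; their new increments $(I(m,n),J(m,n))$ are a deterministic function of the three mutually independent members $I(m,n-1),J(m-1,n),W(m,n)$ of the previous family, the single-vertex lemma supplies their joint law, and deleting those three members leaves the remaining ones jointly independent and independent of any function of the three --- while the weights strictly north-east of the new path are exactly those of the old path with $W(m,n)$ removed. Hence the claim persists.

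It remains to read off (a)--(c). For (a): given $(i,l)$, perform corner moves until every site of the finite lattice rectangle $\{0,\dots,i\}\times\{0,\dots,l\}$ has been passed; the resulting admissible path then contains the horizontal edge from $(i-1,l)$ to $(i,l)$, so $I(i,l)\sim\mathrm{Exp}(a_i+z)$, which is the law of $W(i,0)$. Part (b) is symmetric. For (c), since joint independence of a countable family reduces to that of all its finite subfamilies, it suffices to treat $\{I(i,l):k<i\le p\}\cup\{J(k,j):l<j\le q\}$ for arbitrary $p>k$, $q>l$; the path running along row $q$ from $(0,q)$ to $(k,q)$, down column $k$ to $(k,l)$, along row $l$ to $(p,l)$, and down column $p$ to $(p,0)$ is admissible and has all of these variables among its edge-increments, which are jointly independent by the inductive claim. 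Letting $p,q\to\infty$ yields (c). The only genuine work is the single-vertex lemma and keeping the independence bookkeeping honest at each corner move; everything else is combinatorial.
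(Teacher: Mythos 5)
Your argument is correct: the single-vertex lemma (independent inputs $I_{\mathrm{in}}\sim\mathrm{Exp}(a_m+z)$, $J_{\mathrm{in}}\sim\mathrm{Exp}(b_n-z)$, $W\sim\mathrm{Exp}(a_m+b_n)$ produce independent outputs with the same pair of marginals) combined with induction over down-right paths generated by corner flips is exactly the standard Burke-property proof, and your bookkeeping of which edge increments and which north-east weights remain jointly independent after each flip is sound. The paper itself does not prove this proposition but quotes it from \cite{Emrah}, where the argument is along the same lines, so your proposal matches the intended proof rather than offering a genuinely different route.
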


For admissible $z$, define the measure $\bbP^z$ on $\bbR_+^{\bbZ_+^2}$ by 
$\bbP^z(B) = \E [\bfP_{\bfa, \bfb}^z(B)]$
for any Borel set $B$. Let $\bfE_{\bfa, \bfb}^z$ and $\bbE^z$ denote the expectations under $\bfP_{\bfa, \bfb}^z$ and $\bbP^z$, respectively. 

\subsection{Results}

We begin by briefly summarizing the results from \cite{Emrah}. The ergodicity assumptions on $\mu$ and the superadditivity of the last-passage times imply that $\lim_{n \rightarrow \infty} n^{-1}G(\lf ns \rf, \lf nt \rf) = g(s, t)$ for $s, t > 0$ $\bbP$-a.s. and $\qmeas$-a.s. for $\mu$-a.e. $(\bfa,\bfb)$ for some deterministic function $g$ known as the \emph{shape function}. $g$ admits the variational representation 
\begin{equation}
\label{E28}
g(s, t) = \inf \limits_{z \in [-\ubar{\alpha}, \ubar{\beta}]} \left\{s \E \left[\frac{1}{a+z}\right] + t \E \left[\frac{1}{b-z}\right]\right\} \quad \text{for } s, t > 0.
\end{equation}
The infimum above is actually a minimum with a unique minimizer and the function $g_z$ given by $g_z(s,t) = s \E \left[(a+z)^{-1}\right] + t \E \left[(b-z)^{-1}\right]$ is the shape function in the stationary version of the model. At times we will also view $g(s,t)$ as a function of $(\alpha, \beta) \in \sM_1(\bbR_+)^2$. In these cases, we will use the notation  $(\alpha,\beta) \mapsto g(s,t) \equiv g_{\alpha,\beta}(s,t)$ to highlight the dependence on these measures. This map will be considered for any $(\alpha,\beta) \in \sM_1(\bbR_+)^2$.

Set \begin{equation}
\label{E67}
\begin{aligned}
c_1 = \frac{\E\left[(b+\ubar{\alpha})^{-2}\right]}{\E\left[(a-\ubar{\alpha})^{-2}\right]} \qquad  c_2 = \frac{\E\left[(b-\ubar{\beta})^{-2}\right]}{\E\left[(a+\ubar{\beta})^{-2}\right]}.
\end{aligned}
\end{equation}
Then $0 \le c_1 < c_2 \le \infty$, and $c_1 = 0$ and $c_2 = \infty$ if and only if $\E[(a-\ubar{\alpha})^{-2}] = \infty$ and $\E[(b-\ubar{\beta})^{-2}] = \infty$, respectively. It can be seen from (\ref{E28}) that $g$ is strictly concave for $c_1 < s/t < c_2$ and is linear for $s/t \le c_1$ or $s/t \ge c_2$, see Figure \ref{F1}. 
\begin{figure}[h!]
\centering
\begin{tikzpicture}[scale = 1]
\draw[<->](-1, 5.5)node[above]{$t$}--(-1, -1)--(5.5, -1)node[right]{$s$};
\draw (-1, -1)node[below]{$0$};
\draw[blue] (-1, -1) -- (1, 5); 
\draw[blue] (-1, -1) -- (5, 2); 
\fill[red, fill opacity=0.3](-1, -1)--(0, 2)--(-1, 5)--cycle;
\fill[red, fill opacity=0.3](-1, -1)--(4, -1)--(1.55, 0.275)--cycle;
\fill[red, fill opacity=0.3](-1, -1)--(0, 2)--(1.55, 0.275)--cycle;
\fill [white, very thick](0,2) arc (-160:-123:4 and 3.5);
\draw [red, very thick] (0, 2) -- (-1, 5); 
\draw [red, very thick] (1.55, 0.275) -- (4, -1); 
\draw [red, very thick](0,2) arc (-160:-123:4 and 3.5); 
\draw [red, very thick](1.3, 1.3)node[below]{$g \le 1$};
\draw [blue, very thick](1, 5)node[above]{\large $s/t = c_1$};
\draw [blue, very thick](5, 2)node[above]{\large $s/t = c_2$};
\end{tikzpicture}
\caption{\small{An illustration of the sublevel set $g \le 1$ and the rays $s/t = c_1$ and $s/t = c_2$ when $0 < c_1 < c_2 < \infty$.}}
\label{F1}
\end{figure}
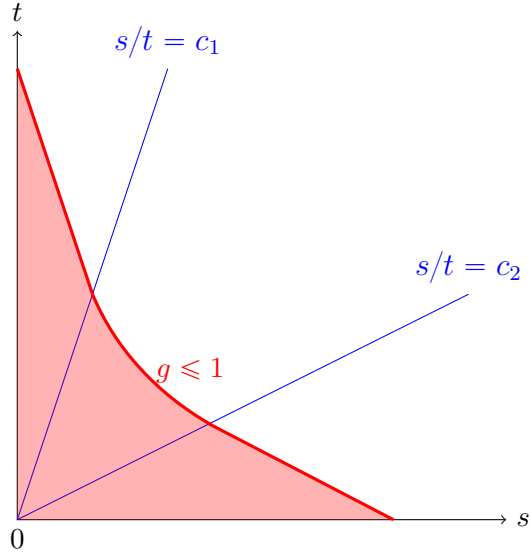

We show in Proposition \ref{P1} of the appendix that for $s, t, \lambda > 0$,  we may define the quenched and annealed Lyapunov exponents by
\begin{align*}
\bfL_{s, t}(\lambda) = \lim_{n \rightarrow \infty} \frac{1}{n} \log \bfE_{\bfa, \bfb}\left[e^{\lambda G(\lf ns \rf, \lf nt \rf)}\right] \quad \mu\text{-a.s.}, \qquad \bbL_{s, t}(\lambda) =  \lim_{n \rightarrow \infty} \frac{1}{n} \log \bbE \left[e^{\lambda G(\lf ns \rf, \lf nt \rf)}\right].  \end{align*}
Our first result is an exact computation of these exponents.
\begin{thm}
\label{T1} For $s,t,\lambda > 0$,
\begin{align}
\bfL_{s, t}(\lambda) &=  \begin{cases} 
\displaystyle \inf \limits_{z \in [-\ubar{\alpha}, \ubar{\beta}-\lambda]} \left\{s \E \log\left(\frac{a+z+\lambda}{a+z}\right) + t \E \log\left(\frac{b-z}{b-z-\lambda}\right)\right\}&\text{ if } 0 < \lambda \le  \ubar{\alpha} + \ubar{\beta} \\ \infty &\text{ if } \lambda > \ubar{\alpha}+\ubar{\beta}.\end{cases}\label{E146}\\
\bbL_{s, t}(\lambda) &=  \begin{cases}
\displaystyle \inf \limits_{z \in [-\ubar{\alpha}, \ubar{\beta}-\lambda]} \left\{s \log \E \left(\frac{a+z+\lambda}{a+z}\right) + t \log \E \left(\frac{b-z}{b-z-\lambda}\right)\right\}&\text{ if } 0 < \lambda \le \ubar{\alpha} + \ubar{\beta} \\ \infty &\text{ if } \lambda > \ubar{\alpha}+\ubar{\beta}\end{cases}\label{E147}
\end{align}
\end{thm}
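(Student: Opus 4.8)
The plan is to compute the Lyapunov exponents by exploiting the stationary model of Proposition \ref{P2}, which gives exact control on moment generating functions of last-passage increments. For the quenched statement \eqref{E146}, the key observation is that in the stationary model $\bfP_{\bfa, \bfb}^z$ with $z \in (-\ubar\alpha, \ubar\beta)$, the last-passage time $\widehat G(m,n)$ decomposes as a sum of independent increments: walking along the bottom row from $(0,0)$ to $(m,0)$ and then up the column to $(m,n)$, we get $\widehat G(m,n) = \sum_{i=1}^m I(i,0) + \sum_{j=1}^n J(m,j)$, and by Proposition \ref{P2}(a)--(c) these $m+n$ summands are jointly independent with $I(i,0) \sim \mathrm{Exp}(a_i+z)$ and $J(m,j) \sim \mathrm{Exp}(b_j-z)$. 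Hence for $\lambda$ small enough that all the exponential moments exist (namely $\lambda < a_i + z$ and $\lambda < b_j - z$ for the relevant indices, which holds when $0<\lambda\le\ubar\alpha+\ubar\beta$ after choosing $z$ in the stated range),
\[
\bfE_{\bfa,\bfb}^z\bigl[e^{\lambda \widehat G(m,n)}\bigr] = \prod_{i=1}^m \frac{a_i+z}{a_i+z-\lambda}\cdot\prod_{j=1}^n \frac{b_j-z}{b_j-z-\lambda}.
\]
Taking $m = \Ns$, $n = \Nt$, then $\tfrac1N\log(\cdot)$, and using the ergodic theorem (the separate ergodicity of $\mu$ under $\tau_k\times\tau_l$) yields $\lim_N \tfrac1N\log \bfE_{\bfa,\bfb}^z[e^{\lambda\widehat G(\Ns,\Nt)}] = s\,\E\log\frac{a+z}{a+z-\lambda} + t\,\E\log\frac{b-z}{b-z-\lambda}$. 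Reindexing via $z \mapsto z+\lambda$ rewrites this as the expression inside the infimum in \eqref{E146}, evaluated along $z' \in [-\ubar\alpha-\lambda,\ubar\beta-\lambda]$; one restricts to $z' \ge -\ubar\alpha$ because for $z'<-\ubar\alpha$ the $a$-term becomes $+\infty$ (the integrand $\log\frac{a+z'+\lambda}{a+z'}$ has a non-integrable singularity or is undefined), so the effective range is exactly $z' \in [-\ubar\alpha, \ubar\beta-\lambda]$.

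The remaining work is to pass from the stationary model to the original (corner) model. For the upper bound, one uses a path-decomposition / coupling comparison: the original weights $W(i,j)$ for $i,j\ge 1$ have the same law under $\bfP_{\bfa,\bfb}$ and $\bfP_{\bfa,\bfb}^z$, and any directed path from $(1,1)$ to $(\Ns,\Nt)$ sits inside the quadrant, so $G(\Ns,\Nt) \le \widehat G(\Ns,\Nt)$ after coupling the bulk weights (the stationary model only adds nonnegative boundary contributions). This gives $\bfL_{s,t}(\lambda) \le$ RHS of \eqref{E146} for every admissible $z$, hence $\le$ the infimum. For the lower bound, one restricts $\widehat G(\Ns,\Nt)$ to paths that leave the boundary quickly: $\widehat G(\Ns,\Nt) \le \widehat G(\epsilon N, 0) + \widehat G(0,\epsilon N) + G'$, where $G'$ is an interior last-passage time over a box of size $(\Ns - \epsilon N)\times(\Nt-\epsilon N)$ shifted into the bulk; the boundary terms contribute $O(\epsilon N)$ in exponent with explicit constants, and letting $\epsilon\downarrow 0$ after $N\to\infty$ recovers the bound. (This is the standard Seppäläinen-type argument referenced as \cite{Seppalainen4, Seppalainen3}; existence of the limit $\bfL_{s,t}$ itself is Proposition \ref{P1} in the appendix, so here we only need to identify its value.) Finally, for $\lambda > \ubar\alpha + \ubar\beta$ one shows $\bfL_{s,t}(\lambda) = \infty$ directly: with positive $\mu$-probability there is a weight $W(i,j)$ with $a_i + b_j$ arbitrarily close to $\ubar\alpha+\ubar\beta < \lambda$ (or, in the bounded-away case, simply $a_1+b_1$ close to $\ubar\alpha+\ubar\beta$), and a single exponential weight with rate $< \lambda$ already makes $\bfE[e^{\lambda W(i,j)}] = \infty$, so any path through that site forces the moment generating function to diverge; a Borel--Cantelli / ergodicity argument upgrades this to the claimed $\mu$-a.s. divergence of the normalized log-MGF.

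The annealed statement \eqref{E147} follows the same skeleton but with $\bbE = \E\bfE_{\bfa,\bfb}$ in place of $\bfE_{\bfa,\bfb}$, using crucially the standing assumption that $\bfa$ and $\bfb$ are independent i.i.d. sequences. In the stationary annealed model the increments along the bottom row are i.i.d.\ across $i$ (each $I(i,0)$ is an $\mathrm{Exp}(a_i+z)$ variable with $a_i$ i.i.d.), and likewise along the column, and the two groups are independent; taking $\E$ of the product over $\bfP_{\bfa,\bfb}^z$ factorizes into $\prod_i \E\bigl[\frac{a_i+z}{a_i+z-\lambda}\bigr]\prod_j \E\bigl[\frac{b_j-z}{b_j-z-\lambda}\bigr] = \bigl(\E\frac{a+z}{a+z-\lambda}\bigr)^m\bigl(\E\frac{b-z}{b-z-\lambda}\bigr)^n$, so $\tfrac1N\log\bbE^z[e^{\lambda\widehat G(\Ns,\Nt)}] \to s\log\E\frac{a+z}{a+z-\lambda} + t\log\E\frac{b-z}{b-z-\lambda}$, and the same reindexing $z\mapsto z+\lambda$ and boundary-range argument gives \eqref{E147}. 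The comparison between annealed original and annealed stationary models, and the $\lambda>\ubar\alpha+\ubar\beta$ divergence, go through verbatim after applying $\E$.

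I expect the main obstacle to be the \emph{lower bound} in the bulk-vs-boundary comparison: one must carefully quantify the cost of forcing the stationary-model optimal path off the two coordinate axes in $o(N)$ steps while controlling the exponential moments uniformly, and show the $\epsilon$-error in the exponent is genuinely $O(\epsilon)$ with a constant independent of $N$. A secondary subtlety is the precise treatment of the endpoints $z = -\ubar\alpha$ and $z = \ubar\beta - \lambda$: one needs the remark in the text allowing definition \eqref{E30} at $z=-\ubar\alpha$ (resp.\ $z=\ubar\beta$) when $a_i>\ubar\alpha$ (resp.\ $b_j>\ubar\beta$) for all $i$ (resp.\ $j$), together with a monotone-convergence argument to cover the general case where the essential infimum is attained — ensuring the infimum over the closed interval is the correct value and that interchanging $\inf_z$ with $\lim_N$ is legitimate.
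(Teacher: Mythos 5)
There is a genuine gap at the core of your argument. Proposition \ref{P2}(c) gives joint independence only of the family $\{I(i,l): i>k\}\cup\{J(k,j): j>l\}$, i.e.\ increments crossing the down-right path that exits the corner $(k,l)$. Your telescoping decomposition $\widehat{G}(m,n)=\sum_{i=1}^m I(i,0)+\sum_{j=1}^n J(m,j)$ runs along an up-right path, and those summands are \emph{not} jointly independent: already $J(m,1)$ depends on the bottom boundary weights $I(i,0)=W(i,0)$, $i\le m$. The exact product formula you base everything on is false. For instance, with $m=n=1$, constant parameters with $a_1+z=b_1-z=1$ and $\lambda=\tfrac12$, one has $\widehat{G}(1,1)=\max(W(1,0),W(0,1))+W(1,1)$ and a direct computation gives $\bfE_{\bfa,\bfb}^z[e^{\lambda\widehat{G}(1,1)}]=\tfrac83\cdot\tfrac43=\tfrac{32}{9}$, whereas your product formula predicts $\tfrac{1}{(1-\lambda)^2}=4$. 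This is also consistent with Theorem \ref{thm:statLyapunov}, where the stationary Lyapunov exponent is a \emph{maximum of two} expressions, not the single expression $s\,\E\log\frac{a+z}{a+z-\lambda}+t\,\E\log\frac{b-z}{b-z-\lambda}$ you assert. Since your upper bound, your lower bound, and the annealed factorization all rest on this identity, the proposal as written does not establish Theorem \ref{T1}; moreover, repairing it by invoking the correct stationary exponent would be circular here, because Theorem \ref{thm:statLyapunov} is itself deduced from Theorem \ref{T1} in the paper.

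The paper's proof takes only MGFs of quantities that genuinely factor under Proposition \ref{P2}(c): the boundary sums and $\sum_{1\le j\le n}J(n,j)$, combined with the exit-point identity (\ref{E9}), which expresses $\sum_j J(n,j)$ as a maximum over entry points of an interior last-passage time plus boundary sums. Bounding that maximum below by each individual term and above by a coarse-grained sum yields, in Lemma \ref{L5}, a variational identity in which the \emph{unknown} exponents $\bfL_{t,1}(\lambda)$, $\bfL_{1,t}(\lambda)$ sit inside a supremum over directions, with the known boundary exponents of Lemma \ref{L4} on the other side. The endpoint and degenerate cases ($\bfL_{1,0}$, $\bfL_{0,1}$, and $\lambda=\ubar{\alpha}+\ubar{\beta}$) are handled separately in Lemma \ref{L2} and Corollary \ref{C1}, and the inversion Lemma \ref{LInvert} then recovers $\bfL_{s,t}(\lambda)=\inf_{z}\{sA(z)+tB(z)\}$; the annealed case requires the extra rearrangement (\ref{E10.1}) to restore independence before averaging over $(\bfa,\bfb)$. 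If you wish to keep the flavor of your argument, you must replace the false independence claim by this exit-point/inversion mechanism (or an equivalent one); a soft "paths leave the boundary quickly" comparison will not produce the exact infimum over $z$.
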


Having proven Theorem \ref{T1}, a proof similar to the proof of Theorem \ref{T1} allows us to compute the Lyapunov exponents in the stationary version of the model.
\begin{thm}\label{thm:statLyapunov}
For $z \in (-\ubar{\alpha},\ubar{\beta})$, almost surely for all $s,t>0$ and $\lambda \in (0, (\ubar{\alpha} + z) \wedge (\ubar{\beta} - z))$
\begin{align*}
&\bfL_{s,t}^z (\lambda) := \lim_{n\to\infty}n^{-1}\log \qzexp\left[e^{\lambda \hat{G}(\lf ns \rf, \lf nt \rf)}\right]\\
=&\left\{s \E\left[ \log \frac{a+z}{a+z-\lambda}\right] + t \E\left[ \log \frac{b-z+\lambda}{b-z}\right]\right\} \vee\left\{s \E \left[ \log \frac{a+z+\lambda}{a+z}\right] + t \E\left[ \log \frac{b-z}{b-z-\lambda}\right]\right\}.
\end{align*}
\end{thm}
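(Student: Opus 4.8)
The plan is to run the exit‑point decomposition of the stationary last‑passage times, feeding in Theorem \ref{T1} for the bulk, in the same spirit as the proof of Theorem \ref{T1} itself. Throughout fix $z\in(-\ubar{\alpha},\ubar{\beta})$ and $\lambda$ in the indicated range, abbreviate $m=\lf Ns \rf$, $n=\lf Nt \rf$, and write $p(w)=\E\log\frac{a+w+\lambda}{a+w}$, $q(w)=\E\log\frac{b-w}{b-w-\lambda}$, so that Theorem \ref{T1} reads $\bfL_{u,v}(\lambda)=\inf_{w\in[-\ubar{\alpha},\ubar{\beta}-\lambda]}\{u\,p(w)+v\,q(w)\}$ for $u,v>0$. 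Since $\lambda<(\ubar{\alpha}+z)\wedge(\ubar{\beta}-z)$, the points $z-\lambda$ and $z$ both lie in $[-\ubar{\alpha},\ubar{\beta}-\lambda]$, and an elementary rewriting identifies the asserted value of $\bfL_{s,t}^z(\lambda)$ with $\{s\,p(z-\lambda)+t\,q(z-\lambda)\}\vee\{s\,p(z)+t\,q(z)\}$; existence of the limit will be obtained together with its value from the two‑sided bounds below.

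The geometric input is the following. For $1\le k\le m$ let $G((k,1),(m,n))$ be the last‑passage time using only the bulk weights $\{W(i,j):k\le i\le m,\ 1\le j\le n\}$; under $\qzmeas$ this is independent of $\widehat G(k,0)=\sum_{i=1}^{k}W(i,0)$, which is a sum of independent $\mathrm{Exp}(a_i+z)$ variables (Proposition \ref{P2} makes these boundary marginals transparent, though here they are already visible from the definition of $\qzmeas$). Splitting a directed path from $(0,0)$ to $(m,n)$ at the last column on which it lies in row $0$, respectively the last row on which it lies in column $0$, gives for $m,n\ge1$
\begin{align*}
\widehat G(m,n)=\max_{1\le k\le m}\bigl[\widehat G(k,0)+G((k,1),(m,n))\bigr]\ \vee\ \max_{1\le l\le n}\bigl[\widehat G(0,l)+G((1,l),(m,n))\bigr].
\end{align*}
Choosing the single best exit point gives a lower bound, and summing $e^{\lambda(\cdot)}$ over the at most $m+n$ exit points gives a matching upper bound, the prefactor $m+n$ being negligible after $N^{-1}\log(\cdot)$. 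For a bottom exit near $k\approx\theta m$ the strong law gives $N^{-1}\log\qzexp[e^{\lambda\widehat G(k,0)}]\to\theta s\,p(z-\lambda)$ (the summands $\log\frac{a_i+z}{a_i+z-\lambda}$ are bounded because $\lambda<\ubar{\alpha}+z$), while $N^{-1}\log\qzexp[e^{\lambda G((k,1),(m,n))}]\to\bfL_{(1-\theta)s,\,t}(\lambda)$ by Theorem \ref{T1} applied to the bulk model with $\bfa$ shifted by $k$; left exits are symmetric. Hence
\begin{align*}
\bfL_{s,t}^z(\lambda)=\sup_{\theta\in[0,1]}\bigl[\theta s\,p(z-\lambda)+\bfL_{(1-\theta)s,\,t}(\lambda)\bigr]\ \vee\ \sup_{\theta\in[0,1]}\bigl[\theta t\,q(z)+\bfL_{s,\,(1-\theta)t}(\lambda)\bigr].
\end{align*}

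It remains to evaluate this variational expression, which is a deterministic exercise. Substituting $\bfL_{(1-\theta)s,t}(\lambda)=\inf_w\{(1-\theta)s\,p(w)+t\,q(w)\}$, the bracket is affine in $\theta$ and convex in $w$ on compact intervals, so the minimax theorem permits exchanging $\sup_\theta$ and $\inf_w$; maximizing the resulting affine function of $\theta$ over $[0,1]$ leaves $\inf_w\{s\,(p(w)\vee p(z-\lambda))+t\,q(w)\}$. Since $p$ is strictly decreasing, $q$ strictly increasing, and $w\mapsto s\,p(w)+t\,q(w)$ strictly convex with minimizer $w^\star$, one checks directly that this infimum equals $s\,p\bigl((z-\lambda)\wedge w^\star\bigr)+t\,q\bigl((z-\lambda)\wedge w^\star\bigr)$; symmetrically the other supremum equals $s\,p\bigl(z\vee w^\star\bigr)+t\,q\bigl(z\vee w^\star\bigr)$. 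A short case distinction according to whether $w^\star$ lies below $z-\lambda$, inside $[z-\lambda,z]$, or above $z$ — again using convexity and monotonicity — shows that the larger of these two values is $\{s\,p(z-\lambda)+t\,q(z-\lambda)\}\vee\{s\,p(z)+t\,q(z)\}$, completing the identification.

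The main obstacle is the uniformity in $\theta$ of the convergence $N^{-1}\log\qzexp[e^{\lambda G((\lfloor\theta m\rfloor,1),(m,n))}]\to\bfL_{(1-\theta)s,\,t}(\lambda)$: Theorem \ref{T1} supplies the limit only for each fixed direction and for $\mu$-a.e.\ environment, whereas here both the direction $(1-\theta)s$ and the spatial shift $\lfloor\theta m\rfloor$ of $\bfa$ vary with $\theta$ and $N$. This is handled in the standard way — monotonicity of $(s',t')\mapsto N^{-1}\log\qzexp[e^{\lambda G(\lf Ns' \rf,\lf Nt' \rf)}]$ in the endpoint, continuity of the positively homogeneous concave function $\bfL_{\cdot,\cdot}(\lambda)$, reduction to a finite grid of directions, and the ergodicity assumption on $\mu$ to control the shifted sub‑boxes — and the degenerate endpoint $\theta=1$ causes no harm since $\bfL_{0,t}(\lambda)\le t\,q(z-\lambda)$ (take $w=z-\lambda$ in the infimum from Theorem \ref{T1}). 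Promoting everything to a single almost‑sure event valid simultaneously for all $s,t>0$ and all admissible $\lambda$ is routine, using monotonicity of $\widehat G$ in $(m,n)$, convexity in $\lambda$, and approximation by rationals.
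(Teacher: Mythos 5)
Your proposal is correct and follows essentially the same route as the paper: the exit-point decomposition of $\widehat G$ along the two boundaries, coarse graining plus Theorem \ref{T1} to reduce to the variational formula $\sup_\theta\{\theta s\,\E\log\frac{a+z}{a+z-\lambda}+\bfL_{(1-\theta)s,t}(\lambda)\}\vee\sup_\theta\{\theta t\,\E\log\frac{b-z}{b-z-\lambda}+\bfL_{s,(1-\theta)t}(\lambda)\}$, a minimax exchange, and a case analysis based on where the unique minimizer of the strictly convex function $w\mapsto s\,p(w)+t\,q(w)$ falls relative to $z-\lambda$ and $z$. Your evaluation of the resulting expression agrees with the paper's conclusion, so no further changes are needed.
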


Similarly, we show in Proposition \ref{P3} that for $s,t > 0$ and $r \in \bbR$, we may define right tail rate functions by
\begin{align*}
&\lim \limits_{n \rightarrow \infty} -\frac{1}{n} \log \bfP_{\bfa, \bfb}(G(\lf ns \rf, \lf nt \rf) \ge nr) = \bfJ_{s, t}(r) \quad \mu\text{-a.s.}, \\
&\lim \limits_{n \rightarrow \infty} -\frac{1}{n} \log \bbP(G(\lf ns \rf, \lf nt \rf) \ge nr) = \bbJ_{s, t}(r) 
\end{align*}

Using Theorem \ref{T1}, we show that
\begin{thm}
\label{thm:Jcd} For $s,t > 0$,
\begin{align}
\bfJ_{s,t}(r) &= \begin{cases}\displaystyle \sup \limits_{\substack{\lambda \in (0,\ubar{\alpha} + \ubar{\beta}] \\ z \in [-\ubar{\alpha}, \ubar{\beta} - \lambda]}} \left\{ r \lambda - s \E\log\left(\frac{a+z+\lambda}{a+z}\right) - t \E\log\left(\frac{b-z}{b-z-\lambda}\right) \right\} & r \geq g(s,t) \label{E26}\\
0 & r < g(s,t) \end{cases}\\
\bbJ_{s,t}(r) &= \begin{cases}
\displaystyle\sup \limits_{\substack{\lambda \in (0,\ubar{\alpha} + \ubar{\beta}] \\ z \in [-\ubar{\alpha}, \ubar{\beta} - \lambda]}} \left\{ r \lambda - s \log\E\left[\frac{a+z+\lambda}{a+z}\right] - t \log\E\left[\frac{b-z}{b-z-\lambda}\right] \right\} & r \geq g(s,t) \label{E26.1} \\
0 & r < g(s,t)
\end{cases}
\end{align}
\end{thm}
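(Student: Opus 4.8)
The plan is to identify $\bfJ_{s,t}$ and $\bbJ_{s,t}$ with Legendre--Fenchel transforms of the Lyapunov exponents of Theorem~\ref{T1}; I describe the quenched case, the annealed one being entirely parallel with \eqref{E147} in place of \eqref{E146}. (Note that a direct appeal to the G\"artner--Ellis theorem is not available, since $\bfL_{s,t}$ need not be steep at the right end $\ubar{\alpha}+\ubar{\beta}$ of its effective domain, so one must work with the Laplace transform directly and then invert by convex duality.) If $r<g(s,t)$, the law of large numbers for the last-passage times recalled from \cite{Emrah} gives $\qmeas(G(\lf ns\rf,\lf nt\rf)\ge nr)\to1$, hence $\bfJ_{s,t}(r)=0$. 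If $r\ge g(s,t)$, then for $\lambda\in(0,\ubar{\alpha}+\ubar{\beta}]$ Markov's inequality gives $\qmeas(G(\lf ns\rf,\lf nt\rf)\ge nr)\le e^{-n\lambda r}\qexp[e^{\lambda G(\lf ns\rf,\lf nt\rf)}]$, so Proposition~\ref{P3} and Theorem~\ref{T1} yield $\bfJ_{s,t}(r)\ge\lambda r-\bfL_{s,t}(\lambda)$; since $\bfL_{s,t}(\lambda)$ is an infimum over $z$, taking the supremum over $(\lambda,z)$ shows $\bfJ_{s,t}(r)$ dominates the right-hand side of \eqref{E26}.

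For the reverse bound I would first establish the dual identity $\bfL_{s,t}(\lambda)=\sup_{r}\{\lambda r-\bfJ_{s,t}(r)\}=:\bfJ_{s,t}^{\star}(\lambda)$ for $\lambda\in(0,\ubar{\alpha}+\ubar{\beta})$. Starting from $\qexp[e^{\lambda G(\lf ns\rf,\lf nt\rf)}]=1+\lambda\int_0^\infty e^{\lambda u}\qmeas(G(\lf ns\rf,\lf nt\rf)>u)\,du$, the inequality $\bfL_{s,t}(\lambda)\ge\bfJ_{s,t}^{\star}(\lambda)$ comes from retaining only the part of the integral with $u$ close to $nr$, using that $\bfJ_{s,t}(r)$ exists as a limit and that $u\mapsto\qmeas(G>u)$ is monotone, and then optimizing over $r$. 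For $\bfL_{s,t}(\lambda)\le\bfJ_{s,t}^{\star}(\lambda)$ one splits the integral at a large multiple of $n$: on the bounded part the convexity and continuity of $\bfJ_{s,t}$ from Proposition~\ref{P3} give, via a covering argument, a uniform estimate $\qmeas(G>u)\le e^{-n\bfJ_{s,t}(u/n)+o(n)}$, and on the tail the Chernoff bound at an exponent $\ubar{\alpha}+\ubar{\beta}-\varepsilon>\lambda$ makes the integrand summable and, after letting the threshold and then $\varepsilon$ go to their limits, negligible. This is the step I expect to be the main obstacle: it is where the a priori regularity of $\bfJ_{s,t}$ supplied by Proposition~\ref{P3} is essential and where the error terms must be handled carefully, and the endpoint $\lambda=\ubar{\alpha}+\ubar{\beta}$ has to be recovered afterwards by monotonicity and lower semicontinuity.

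To conclude, observe that $\bfJ_{s,t}$, extended by $0$ on $(-\infty,g(s,t))$, is proper, convex and lower semicontinuous by Proposition~\ref{P3}, so $\bfJ_{s,t}=(\bfJ_{s,t}^{\star})^{\star}$ by Fenchel--Moreau. By the previous step $\bfJ_{s,t}^{\star}=\bfL_{s,t}$ on $(0,\ubar{\alpha}+\ubar{\beta}]$ (with $\bfJ_{s,t}^{\star}(0)=0$), while $\bfJ_{s,t}^{\star}\equiv+\infty$ elsewhere --- for $\lambda<0$ because $\bfJ_{s,t}$ vanishes on a left half-line, and for $\lambda>\ubar{\alpha}+\ubar{\beta}$ because of the elementary bound $\bfJ_{s,t}(r)\le(\ubar{\alpha}+\ubar{\beta})r+o(r)$ obtained by forcing a single weight at a site with nearly minimal parameter to be large. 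Since the log-moment generating function is continuous at $0$ we have $\bfL_{s,t}(0^{+})=0$, so $\lambda=0$ contributes nothing beyond the closure of the supremum over $(0,\ubar{\alpha}+\ubar{\beta}]$ and $\bfJ_{s,t}(r)=\sup_{\lambda\in(0,\ubar{\alpha}+\ubar{\beta}]}\{\lambda r-\bfL_{s,t}(\lambda)\}$ for all $r$. Rewriting $\bfL_{s,t}(\lambda)$ as the infimum over $z\in[-\ubar{\alpha},\ubar{\beta}-\lambda]$ and passing the outer supremum through the minus sign gives the formula in \eqref{E26}; it equals $0$ when $r\le g(s,t)$ because $\bfL_{s,t}(\lambda)=\bfJ_{s,t}^{\star}(\lambda)\ge\lambda g(s,t)$, which reconciles the two cases of \eqref{E26}. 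Together with the first paragraph this proves the quenched statement, and \eqref{E26.1} follows by the same argument using \eqref{E147}.
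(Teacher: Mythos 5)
Your proposal is correct and follows essentially the same route as the paper: prove $\bfJ_{s,t}^{\star}=\bfL_{s,t}$ on $(0,\ubar{\alpha}+\ubar{\beta}]$ (Markov/Chernoff for one direction, a truncation of the exponential moment plus control of the tail via an exponent below $\ubar{\alpha}+\ubar{\beta}$ for the other, with the endpoint and the regions $\lambda<0$, $\lambda>\ubar{\alpha}+\ubar{\beta}$ handled exactly as in the paper), and then invert by Fenchel--Moreau using the convexity and finiteness of $\bfJ_{s,t}$ from Proposition \ref{P3}. The only cosmetic differences are that you work with the integral representation of the Laplace transform and a Chernoff bound where the paper partitions $[0,M]$ and uses H\"older, which are interchangeable implementations of the same truncation step.
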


The preceding result also describes left tail large deviations for a tagged particle in an inhomogeneous TASEP with step initial condition. This TASEP can be obtained from the corner growth by defining the position of particle $i \in \bbN$ at time $t \ge 0$ as in (\ref{eq:particle}). By monotonicity of $G$, the particles remain ordered i.e. $\sigma(i, t) > \sigma(i+1, t)$ for $i \in \bbN$ and $t \ge 0$.  Initially, $\sigma(i, 0) = -i$ for $i \in \bbN$ and particles move on $\bbZ$ over time according to the following rule. If particle $i$ is at site $-i+j-1$, as soon as site $-i+j$ is vacant, particle $i$ moves to site $-i+j$ after $W(i, j)$ amount of time. Since $\{\sigma(i, t) > j\} = \{G(i, i+j) < t\}$ as events, Theorem \ref{thm:Jcd} implies the next corollary. 
\begin{cor}
For $x, y, t > 0$, 
\begin{align*}
&\lim_{n \rightarrow \infty}-\frac{1}{n} \log \bfP_{\bfa, \bfb}(\sigma(\lf nx \rf, nt) > \lf ny \rf) = \bfJ_{x, x+y}(t) \quad \text{ a.s. }\\
&\lim_{n \rightarrow \infty}-\frac{1}{n} \log \bbP(\sigma(\lf nx \rf, nt) > \lf ny \rf) = \bbJ_{x, x+y}(t). 
\end{align*}
\end{cor}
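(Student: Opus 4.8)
The corollary is Theorem~\ref{thm:Jcd} rewritten for the tagged particle; given that theorem, the only point needing care is the reconciliation of the lattice index $\lf nx\rf+\lf ny\rf$ with $\lf n(x+y)\rf$. The plan is: (i) use~(\ref{eq:particle}) to turn the tagged‑particle event into a right‑tail last‑passage event; (ii) absorb the $O(1)$ index discrepancy by a monotonicity sandwich and continuity of the rate functions; (iii) read off the conclusion from Theorem~\ref{thm:Jcd}. The annealed statement is identical with $\bbP,\bbJ$ replacing $\bfP_{\bfa,\bfb},\bfJ$.

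\emph{Step 1.} From $\sigma(i,t)=-i+\max\{j\in\bbN:G(i,j)\le t\}$ and the monotonicity of $j\mapsto G(i,j)$ one gets $\{\sigma(i,t)>j\}=\{G(i,i+j+1)\le t\}$, i.e.\ $\{\sigma(i,t)\le j\}=\{G(i,i+j+1)>t\}$ — this is the identity $\{\sigma(i,t)>j\}=\{G(i,i+j)<t\}$ quoted before the corollary, up to a unit shift of the second index. With $i=\lf nx\rf$, $j=\lf ny\rf$ and time level $nt$, the event whose probability carries the rate $\bfJ_{x,x+y}(t)$ is the one in which the tagged particle $\lf nx\rf$ lags behind site $\lf ny\rf$ at time $nt$, namely $\{\sigma(\lf nx\rf,nt)\le\lf ny\rf\}=\{G(\lf nx\rf,\lf nx\rf+\lf ny\rf+1)>nt\}$. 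Since $G(m,n)$ has an atomless law for $m,n\ge1$ (a finite maximum of sums of exponentials), we may replace ``$>$'' by ``$\ge$''; so the quantity to study is $\bfP_{\bfa,\bfb}(G(\lf nx\rf,\lf nx\rf+\lf ny\rf+1)\ge nt)$ (and $\bbP(\cdots)$ in the annealed case), which is the right‑tail probability of Theorem~\ref{thm:Jcd} at $(s,t,r)=(x,x+y,t)$ with its second coordinate $\lf n(x+y)\rf$ replaced by $\lf nx\rf+\lf ny\rf+1$.

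\emph{Step 2.} For $\epsilon>0$ and all $n$ large, $\lf n(x+y-\epsilon)\rf\le\lf nx\rf+\lf ny\rf+1\le\lf n(x+y+\epsilon)\rf$, so monotonicity of $(m,n)\mapsto G(m,n)$ in the second coordinate gives
\[
\big\{G(\lf nx\rf,\lf n(x+y-\epsilon)\rf)\ge nt\big\}\ \subseteq\ \big\{G(\lf nx\rf,\lf nx\rf+\lf ny\rf+1)\ge nt\big\}\ \subseteq\ \big\{G(\lf nx\rf,\lf n(x+y+\epsilon)\rf)\ge nt\big\}.
\]
Applying $-n^{-1}\log(\cdot)$, sending $n\to\infty$, and invoking Theorem~\ref{thm:Jcd} on the two extreme probabilities (which holds $\mu$-a.s.) yields
\[
\bfJ_{x,x+y-\epsilon}(t)\ \ge\ \limsup_{n\to\infty}\Big(-\tfrac1n\log\bfP_{\bfa,\bfb}(G(\lf nx\rf,\lf nx\rf+\lf ny\rf+1)\ge nt)\Big)\ \ge\ \liminf_{n\to\infty}(\cdots)\ \ge\ \bfJ_{x,x+y+\epsilon}(t),
\]
and likewise with $\bbP,\bbJ$. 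Let $\epsilon\downarrow0$. From the variational formula of Theorem~\ref{thm:Jcd}, $\theta\mapsto\bfJ_{x,\theta}(t)$ is a supremum of functions affine in $\theta$, hence convex, and it is finite (the objective there is $\le t(\ubar\alpha+\ubar\beta)$, since $t\lambda\le t(\ubar\alpha+\ubar\beta)$ on the range of $\lambda$ while the two entropy‑type terms are $\ge0$); a finite convex function is continuous, so $\bfJ_{x,x+y\pm\epsilon}(t)\to\bfJ_{x,x+y}(t)$, and similarly for $\bbJ$. This gives the two limits in the corollary. For the $\mu$-a.s.\ assertion one intersects the full‑measure sets provided by Theorem~\ref{thm:Jcd} at $(x,x+y,t)$ and at the countably many $(x,x+y\pm\epsilon,t)$, $\epsilon\in\bbQ_{>0}$.

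\emph{Main obstacle.} There is no substantive difficulty once Theorem~\ref{thm:Jcd} is available: the result is the translation of Step~1 followed by the routine index reconciliation of Step~2. The single thing to be careful about is precisely that moving the lattice endpoint by $O(1)$ does not change the exponential rate — handled by the monotonicity sandwich together with the (elementary) continuity of $(s,t)\mapsto\bfJ_{s,t}(r)$ and $(s,t)\mapsto\bbJ_{s,t}(r)$ read off from their variational formulas — plus the countable‑intersection bookkeeping needed to make the quenched statement hold simultaneously on a single $\mu$‑full set.
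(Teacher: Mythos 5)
Your proposal is correct in substance and follows exactly the route the paper takes: the paper's entire proof is the particle/last-passage identity together with Theorem~\ref{thm:Jcd}, and your Step~2 merely spells out the $O(1)$ index reconciliation that the paper leaves implicit (the monotonicity, convexity and continuity of $(s,t)\mapsto\bfJ_{s,t}(r)$ you need there are already recorded in Proposition~\ref{P3}(c), so you could cite that instead of rederiving continuity from the variational formula). One point you should make explicit rather than handle silently: what you actually prove is the rate for the complementary event $\{\sigma(\lf nx\rf,nt)\le\lf ny\rf\}=\{G(\lf nx\rf,\lf nx\rf+\lf ny\rf+1)>nt\}$, not for the event $\{\sigma(\lf nx\rf,nt)>\lf ny\rf\}$ displayed in the corollary. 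You are right to do so: as printed the corollary has the inequality reversed, since for $t>g(x,x+y)$ the probability $\bfP_{\bfa,\bfb}(\sigma(\lf nx\rf,nt)>\lf ny\rf)$ tends to one while $\bfJ_{x,x+y}(t)>0$, and for $t<g(x,x+y)$ it is a quenched left-tail event with superexponential decay; the complemented identity $\{\sigma(i,t)\le j\}=\{G(i,i+j+1)>t\}$ together with Theorem~\ref{thm:Jcd} is precisely what yields the stated rates, in line with the paper's remark that this describes left-tail deviations of the tagged particle. So the proof is fine, but state that correction of the event explicitly.
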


As with the shape function, we will at times consider the maps $(\alpha,\beta) \mapsto \bfJ_{s,t}(r) \equiv \bfJ_{s,t}^{\alpha,\beta}(r)$ and $(\alpha,\beta) \mapsto \bbJ_{s,t}(r) \equiv \bbJ_{s,t}^{\alpha,\beta}(r)$.

Note that the Lyapunov exponents and the right tail rate functions depend on $\mu$ only through the marginal distributions $\alpha$ and $\beta$. The variational problem in (\ref{E26}) can be solved exactly for certain choices of $\alpha,\beta, s$ and $t$. We note that if $r \geq g(s,t)$ and there exists $\lam \in (0, \ubar{\alpha}+\ubar{\beta})$ and $\zs \in (-\ubar{\alpha}, \ubar{\beta}-\lam)$ such that 
\begin{align*}
0 &= s\E\left[\frac{1}{a+\zs+\lam}-\frac{1}{a+\zs}\right] + t \E\left[\frac{1}{b-\zs-\lam}-\frac{1}{b-\zs}\right] \\
r &= s\E \left[\dfrac{1}{a+\zs+\lam}\right] + t \E \left[\dfrac{1}{b-\zs-\lam}\right], 
\end{align*}
then 
\begin{align}
\label{E122}
\bfJ_{s, t}(r) = \lam r - s\E\log\left(\frac{a+\zs+\lam}{a+\zs}\right) + t\E\log\left(\frac{b-\zs}{b-\zs-\lam}\right). 
\end{align}

\begin{ex} If $\alpha = \beta = \delta_{c/2}$ for $c > 0$, then for $r \geq g(s,t) = c^{-1}(\sqrt{s} + \sqrt{t})^2$,
\begin{equation}
\label{E65}
\bfJ_{s, t}(r) = \sqrt{(s+t-cr)^2-4st} -2s \cosh^{-1}\left(\frac{s-t+cr}{2\sqrt{csr}}\right) - 2t \cosh^{-1}\left(\frac{t-s+cr}{2\sqrt{ctr}}\right),
\end{equation}
which recovers \cite[Theorem 4.4]{Seppalainen3}.
\end{ex}
\begin{ex}
If $\alpha = \beta = p\delta_{c} + q\delta_{d}$ for $p, q, c, d > 0$ with $p+q = 1$ and $s = t$, then for $r \geq g(s,s) = 2s\left(pc^{-1} + qd^{-1}\right),$
\begin{align*}
\bfJ_{s, s}(r) &= r\lam - sp \log\left(\frac{c+\zs+\lam}{c+\zs}\right)- tq \log\left(\frac{c-\zs}{c-\zs-\lam}\right)\\
&-sq \log\left(\frac{d+\zs+\lam}{d+\zs}\right)- tq \log\left(\frac{d-\zs}{d-\zs-\lam}\right)
\end{align*}
where 
\begin{align*}
\zs &= \frac{2cp + 2dq + c^2r + d^2r -\sqrt{\Delta}}{2r}, \qquad \zs+\lam = \frac{2cp + 2dq + c^2r + d^2r + \sqrt{\Delta}}{2r},\\
\Delta &= (2 c p+ 2 d q + c^2 r + d^2 r)^2 + 4 r (2 c d^2 p +2 c^2 d q - c^2 d^2 r). 
\end{align*}
More complicated exact formulas in this model are available in all directions $(s,t)$.
\end{ex}
\begin{ex}
If $\alpha$ and $\beta$ are uniform on $[c/2, c/2+l]$ for $c, l > 0$ and $s = t$, then
\begin{align*}
\bfJ_{s, s}(r) = r\lam -\frac{2s}{l} \int_{c/2}^{c/2+l} \log\left(\frac{x+\zs+\lam}{x+\zs}\right)dx \quad \text{ for } r \ge g(s, s) = \frac{2s}{l}\log\left(1+\frac{2l}{c}\right),
\end{align*}
where
\begin{align*}
\zs = - \sqrt{\frac{(c/2+l)^2-c^2e^{rl/s}/4}{1-e^{rl/s}}} \qquad \zs+\lam = \sqrt{\frac{(c/2+l)^2-c^2e^{rl/s}/4}{1-e^{rl/s}}}.
\end{align*}
\end{ex}

Left tail large deviations in the quenched model have rate strictly larger than $n$. We expect that under mild hypotheses the correct rate should be $n^2$, as is the case in the homogeneous model where $\alpha = \beta = \delta_{\frac{c}{2}}$ \cite{Johansson, Seppalainen3}.
\begin{lem} \label{quenchedlb}
$\lim \limits_{n \to \infty} -\dfrac{1}{n}\log  \qmeas \left(G(\lfloor n s \rfloor, \lfloor n t\rfloor ) \leq nr \right) = \infty$ for $s, t > 0$ and $r < g(s, t)$ $\mu$-a.s.
\end{lem}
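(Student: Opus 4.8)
The plan is to show that $\qmeas(G(\lf ns\rf,\lf nt\rf)\le nr)$ decays faster than any exponential in $n$, for a fixed $r<g(s,t)$. The mechanism: for every target $M$ one exhibits at least $M$ pairwise \emph{independent} sub-events, each of probability $e^{-\Theta(n)}$, all implied by $\{G(\lf ns\rf,\lf nt\rf)\le nr\}$; the product bound then gives $e^{-\Theta(Mn)}$, and $M$ is arbitrary. Concretely, $G(\lf ns\rf,\lf nt\rf)\ge\sum_{(i,j)\in\pi}W(i,j)$ for every directed path $\pi$ in $[\lf ns\rf]\times[\lf nt\rf]$ (extend $\pi$ to the corner; the added weights are nonnegative), so for any family $\pi_1,\dots,\pi_L$ of \emph{pairwise disjoint} directed paths the event $\{G(\lf ns\rf,\lf nt\rf)\le nr\}$ is contained in $\bigcap_{\ell=1}^{L}\{\sum_{(i,j)\in\pi_\ell}W(i,j)\le nr\}$, and these $L$ events are independent under $\qmeas$.

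To exploit this one wants the $\pi_\ell$ to have length of order $n$ and ergodic mean $\frac1n\sum_{(i,j)\in\pi_\ell}(a_i+b_j)^{-1}\to v$ for a fixed $v>r$, uniformly in $\ell$; then each $\sum_{(i,j)\in\pi_\ell}W(i,j)$ is a sum of order $n$ independent exponentials with rates in $[\ubar\alpha+\ubar\beta,\infty)$ lying below its mean, so a Cram\'er estimate yields $\qmeas(\sum_{(i,j)\in\pi_\ell}W(i,j)\le nr)\le e^{-n\gamma}$ with $\gamma=\gamma(v,r)>0$ independent of $\ell$. Since one can pack $L$ of order $n$ such disjoint paths into $[\lf ns\rf]\times[\lf nt\rf]$ (translated near‑diagonal zig‑zag paths, or, most crudely, the single columns $\pi_j=\{(i,j):1\le i\le\lf ns\rf\}$, $1\le j\le\lf nt\rf$), this produces $\qmeas(G(\lf ns\rf,\lf nt\rf)\le nr)\le e^{-\Theta(n^2)}$, far more than needed. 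Taking the $\pi_\ell$ to be lattice approximations of a macroscopic path whose mean value is read off from the variational formula behind (\ref{E28}) (see \cite{Emrah}), and using stationarity/ergodicity of $(\bfa,\bfb)$ to identify $\frac1n\sum_{(i,j)\in\pi_\ell}(a_i+b_j)^{-1}$, this argument handles every $r$ below $g_{\mathrm{det}}(s,t)$, the supremum of the mean values of macroscopic directed paths in direction $(s,t)$.

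The principal obstacle is that $g_{\mathrm{det}}(s,t)<g(s,t)$ strictly: the last‑passage constant beats the best macroscopic‑path mean by a true ``optimization bonus'' (already visible in the homogeneous case, where every macroscopic path has mean $m(s+t)$ whereas $g(s,t)=m(s+t)+2\sigma\sqrt{st}$). Thus for $r\in\big(g_{\mathrm{det}}(s,t),g(s,t)\big)$ the construction above is empty — every deterministic path, and every macroscopically thin sub‑rectangle, has mean value below $r$, so none of the natural sub‑events is rare — and forcing $\{G(\lf ns\rf,\lf nt\rf)\le nr\}$ in this range requires a genuinely two‑dimensional depression of the $\Theta(n^2)$ weights: it forces the last‑passage value of every macroscopic \emph{fat} sub‑block of side of order $k$ below its typical value $\approx k\,g(s,t)$, and the left deviation of a single fat block is the very statement to be proved, one scale down, so a purely soft induction does not close. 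To break this I would invoke exact solvability: $W(i,j)\sim\mathrm{Exp}(a_i+b_j)$ is the $q\to1$ limit of geometric weights with parameters $q^{a_i+b_j}$, for which the corner growth model is a Schur process, so $\qmeas(G(\lf ns\rf,\lf nt\rf)\le t)$ has a Fredholm‑determinant representation in terms of a parameter‑deformed determinantal point process carrying $\Theta(n^2)$ pairwise interactions; confining that process strictly below the bulk of its equilibrium measure costs $\Theta(n^2)$, exactly as in \cite{Johansson}, and extracting from this an $e^{-\omega(n)}$ (indeed $e^{-\Theta(n^2)}$) upper bound uniform over $\mu$‑a.e.\ fixed $(\bfa,\bfb)$ would be where the real work lies. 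An alternative would be to carry out the analysis inside the stationary model of Proposition~\ref{P2} via its Burke property and a superadditive decomposition; but the stationary last‑passage time has only an exponential left tail, so that route would still need the observation that removing the stationary boundary makes downward deviations genuinely more expensive — again the crux.
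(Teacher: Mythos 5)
Your elementary mechanism (disjoint substructures whose sums are independent under the product measure $\qmeas$, each forced below its mean by the event $\{G(\lf ns\rf,\lf nt\rf)\le nr\}$) is exactly the right one, but your execution with \emph{paths} only covers $r$ below the best macroscopic path mean, and, as you yourself observe, that threshold is strictly below $g(s,t)$. At that point the proposal stops being a proof: the Fredholm-determinant/Schur-process route and the stationary-model route are only named, not carried out, and neither is needed. The missing idea is to replace paths by \emph{fat blocks of a fixed mesoscopic size} $m$, because the block captures the optimization bonus you are worried about: by the shape theorem (superadditivity), $m^{-1}\aexp G(\lf ms\rf,\lf mt\rf)\ge g(s,t)-\epsilon/2$ for $m$ large, with the expectation taken under the annealed measure $\ameas$. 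This is where your "soft induction does not close" objection fails: one does not need the left-tail rate of a single fat block (the statement one scale down); one only needs its annealed mean and second moment, which are finite since all weights are exponential with rates bounded below by $\ubar{\alpha}+\ubar{\beta}>0$.

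Concretely, the paper tiles $[\lf ns\rf]\times[\lf nt\rf]$ with pairwise disjoint translates $B_k^j$ of the $m$-block arranged along $\lf\sqrt n\rf$ disjoint diagonals, and uses $G(\lf ns\rf,\lf nt\rf)\ge\max_k\sum_j G_{k,j}$, where $G_{k,j}$ is the passage time across $B_k^j$. Within a diagonal the $G_{k,j}$ are independent under $\qmeas$ and identically distributed under $\ameas$; a small-$\lambda$ exponential Chebyshev bound, Jensen's inequality ($\E\log\qexp[e^{-\lambda G_{0,0}}]\le\log\aexp[e^{-\lambda G_{0,0}}]\le-\lambda\aexp G_{0,0}+\tfrac{\lambda^2}{2}\aexp G_{0,0}^2$), and the ergodic theorem in the environment give, $\mu$-a.s.\ for large $n$, a bound $e^{-nC/(2m)}$ for each diagonal sum falling below $n(g(s,t)-\epsilon)$, with $C>0$ deterministic. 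Since distinct diagonals use disjoint blocks, the corresponding events are independent under $\qmeas$, so restricting to $K$ diagonals multiplies the exponent by $K$, and $K$ is arbitrary; this yields the superexponential decay for all $r<g(s,t)$, with only a law-of-large-numbers-level input per block. So the gap in your argument is precisely the restriction to one-dimensional substructures; once the independent substructures are mesoscopic rectangles whose normalized mean approximates $g(s,t)$, the elementary route you started with does close, and no exact solvability is required.
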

Combining our results for the right and left tail deviations, we can prove a full quenched LDP at rate $n$. The rate function is given by
\begin{align} \label{Istdef}
\bfI_{s,t}(r) &= \begin{cases}
\bfJ_{s,t}(r) & r \geq g(s,t)\\
\infty & r < g(s,t)
\end{cases}.
\end{align}
As before, we will at times use the notation $(\alpha,\beta) \mapsto \bfI_{s,t}(r) \equiv \bfI_{s,t}^{\alpha,\beta}(r)$.
\begin{thm}
\label{T0}
$\mu$-a.s, for any $s, t >0$, the distribution of $n^{-1}G(\lf ns \rf, \lf nt \rf)$ under $\bfP_{\bfa, \bfb}$ satisfies a large deviation principle with rate $n$ and convex, good rate function $\bfI_{s,t}$.
\end{thm}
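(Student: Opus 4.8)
The plan is to read off the large deviation principle by gluing together the one‑sided estimates already established, so no new probabilistic input is needed. Fix $s,t>0$, set $X_n=n^{-1}G(\lf ns\rf,\lf nt\rf)$, and argue on the event of full $\mu$‑measure carrying the conclusions of Proposition \ref{P3} and Lemma \ref{quenchedlb} simultaneously. There we have $\lim_n -n^{-1}\log\qmeas(X_n\ge r)=\bfJ_{s,t}(r)$ for every $r\in\bbR$ (with $\bfJ_{s,t}$ given by (\ref{E26}) for $r\ge g(s,t)$ and $0$ below), and $\lim_n -n^{-1}\log\qmeas(X_n\le r)=\infty$ for every $r<g(s,t)$. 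On this event the theorem reduces to a deterministic LDP for the laws of $X_n$; applying the first estimate with $r<g(s,t)$ and with $r>g(s,t)$ also forces $X_n\to g(s,t)$ in $\qmeas$‑probability.

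The first task is to record the regularity of $\bfI_{s,t}$. Writing $f(\lambda,z)=s\E\log\frac{a+z+\lambda}{a+z}+t\E\log\frac{b-z}{b-z-\lambda}\ge0$ for $z\in[-\ubar{\alpha},\ubar{\beta}-\lambda]$, formula (\ref{E26}) presents $r\mapsto\bfJ_{s,t}(r)$ on $[g(s,t),\infty)$ as a supremum of affine functions of $r$ with slopes in $(0,\ubar{\alpha}+\ubar{\beta}]$ and nonpositive intercepts; hence it is convex, nondecreasing, bounded above by $(\ubar{\alpha}+\ubar{\beta})r<\infty$, and — since $\bfL_{s,t}(\lambda)\ge\lambda g(s,t)$ by Jensen's inequality together with $n^{-1}\qexp[G(\lf ns\rf,\lf nt\rf)]\to g(s,t)$ — has value $0$ at $r=g(s,t)$, which is therefore its minimum; being finite and convex it is continuous on $[g(s,t),\infty)$. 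Taking a fixed $\lambda_0\in(0,\ubar{\alpha}+\ubar{\beta})$ with admissible $z_0$ gives $\bfJ_{s,t}(r)\ge\lambda_0 r-f(\lambda_0,z_0)$ with finite constant, so $\bfJ_{s,t}(r)\to\infty$. For $r>g(s,t)$, choosing $z$ to be (or, when the minimizer of (\ref{E28}) equals $\ubar{\beta}$, approaching from below) the minimizer $\zs$ and letting $\lambda\downarrow 0$ yields $\lambda^{-1}f(\lambda,z)\to s\E[(a+\zs)^{-1}]+t\E[(b-\zs)^{-1}]=g(s,t)<r$, so $r\lambda-f(\lambda,z)>0$ for small $\lambda>0$ and $\bfJ_{s,t}(r)>0$; together with convexity and monotonicity this makes $\bfJ_{s,t}$ strictly increasing on $[g(s,t),\infty)$. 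Finally, since $\bfJ_{s,t}$ is convex on $[g(s,t),\infty)$ with minimum $0$ at the left endpoint, the extension $\bfI_{s,t}$ by $+\infty$ to the left stays convex; it is lower semicontinuous with sublevel sets of the form $[g(s,t),r_c]$, hence a convex good rate function.

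For the upper bound let $F\subseteq\bbR$ be closed. If $F\subseteq(-\infty,g(s,t))$ then $\sup F<g(s,t)$ (a closed subset of an open half‑line cannot have the missing endpoint as supremum), so $\qmeas(X_n\in F)\le\qmeas(X_n\le\sup F)$ decays at rate $\infty=\inf_F\bfI_{s,t}$. Otherwise put $r_0=\min\{r\in F:r\ge g(s,t)\}$; monotonicity of $\bfJ_{s,t}$ gives $\inf_F\bfI_{s,t}=\bfJ_{s,t}(r_0)$. If $r_0=g(s,t)$ then $\inf_F\bfI_{s,t}=0$ and the bound is automatic. If $r_0>g(s,t)$ then $r_1:=\sup\bigl(F\cap(-\infty,g(s,t))\bigr)<g(s,t)$ (otherwise $g(s,t)\in F$, forcing $r_0=g(s,t)$), and since $F\subseteq[r_0,\infty)\cup(-\infty,r_1]$ we get $\qmeas(X_n\in F)\le\qmeas(X_n\ge r_0)+\qmeas(X_n\le r_1)$, whose decay rate is $\min(\bfJ_{s,t}(r_0),\infty)=\inf_F\bfI_{s,t}$. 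For the lower bound let $U$ be open; it suffices to show $\liminf_n n^{-1}\log\qmeas(X_n\in U)\ge-\bfJ_{s,t}(r)$ for each $r\in U$ with $r\ge g(s,t)$ (nothing to prove if no such $r$ exists). If $r=g(s,t)$, then $U$ is a neighbourhood of $g(s,t)$ and $X_n\to g(s,t)$ in probability gives $\qmeas(X_n\in U)\to1$, so the liminf is $0$. If $r>g(s,t)$, choose $\varepsilon>0$ with $[r-\varepsilon,r+\varepsilon]\subseteq U$ and $r-\varepsilon>g(s,t)$, and estimate $\qmeas(X_n\in U)\ge\qmeas(X_n\ge r-\varepsilon)-\qmeas(X_n\ge r+\varepsilon)$; since $\bfJ_{s,t}(r-\varepsilon)<\bfJ_{s,t}(r+\varepsilon)$ by strict monotonicity, the subtracted term is exponentially negligible compared with the first, so the liminf is $\ge-\bfJ_{s,t}(r-\varepsilon)$, and sending $\varepsilon\downarrow0$ with continuity of $\bfJ_{s,t}$ concludes.

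The whole argument is essentially bookkeeping once Proposition \ref{P3}, Lemma \ref{quenchedlb} and Theorem \ref{thm:Jcd} are in hand; the one substantive point is the regularity of $\bfJ_{s,t}$ in the second paragraph — above all its strict monotonicity on $(g(s,t),\infty)$, which rests on the $\lambda\downarrow0$ behaviour of the variational problem (\ref{E26}) and its link to the shape‑function formula (\ref{E28}) — together with the minor care needed at the boundary $r=g(s,t)$ when splitting off the part of $F$ below the shape value. One could alternatively route the proof through the Gärtner–Ellis theorem, identifying $\bfL_{s,t}$ with $\lim_n n^{-1}\log\qexp[e^{\lambda n X_n}]$ via Theorem \ref{T1} and noting that $\bfJ_{s,t}$ is its Legendre–Fenchel transform; but then one must contend with a possible failure of steepness of $\bfL_{s,t}$ at $\lambda=\ubar{\alpha}+\ubar{\beta}$, which the direct route above sidesteps since the right‑tail rate is known for all $r$ from the outset.
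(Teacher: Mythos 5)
Your argument is correct and follows essentially the same route as the paper: combine the right-tail rate function of Theorem \ref{thm:Jcd} (via Proposition \ref{P3}) with the superexponential left-tail bound of Lemma \ref{quenchedlb}, using continuity and strict monotonicity of $\bfJ_{s,t}$ above $g(s,t)$ for the open-set lower bound and a split of closed sets around $g(s,t)$ for the upper bound. The only cosmetic difference is that you re-derive strict positivity/strict monotonicity of $\bfJ_{s,t}$ on $(g(s,t),\infty)$ directly from the variational formula (\ref{E26}) and (\ref{E28}) plus convexity, whereas the paper cites Lemma \ref{L1}; both are valid, and your write-up simply fills in details (notably the closed-set upper bound) that the paper leaves terse.
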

Although our proof of the large deviation principle goes through the Lyapunov exponents, we do not apply the G\"artner-Ellis theorem. The steepness condition in this model is $E[(a-\ubar{\alpha})^{-1}] = E[(b-\ubar{\beta})^{-1}]= \infty$, which would rule out having linear segments of the shape function and so is too restrictive.

In contrast to the quenched case, there are non-trivial annealed large deviations at rate $n$. The following bound gives a mechanism for these deviations.
\begin{lem}\label{annealedub}
For any $x < y$, 
\begin{align*}
\limsup_{n \to \infty} - \frac{1}{n} \log \ameas(n^{-1}G(\lf ns \rf, \lf nt \rf) \in (x,y)) \leq \inf_{\substack{\nu_1 \in \sM^\alpha, \nu_2 \in \sM^\beta \\ g_{\nu_1,\nu_2}(s,t) \in (x,y)}} \{s \En (\nu_1 | \alpha) + t \En (\nu_2 | \beta)\}
\end{align*}
\end{lem}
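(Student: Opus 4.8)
The plan is to produce, for each pair $(\nu_1,\nu_2) \in \sM^\alpha \times \sM^\beta$ with $g_{\nu_1,\nu_2}(s,t) \in (x,y)$, a change-of-measure argument that yields the bound $s\En(\nu_1|\alpha) + t\En(\nu_2|\beta)$ and then optimize over such pairs. The natural candidate is a tilted parameter measure $\tilde\mu$ under which $\bfa$ is i.i.d.\ with marginal $\nu_1$ and $\bfb$ is i.i.d.\ with marginal $\nu_2$, while keeping the conditional law of the weights given $(\bfa,\bfb)$ unchanged (i.e.\ exponential with mean $(a_i+b_j)^{-1}$). First I would fix $\varepsilon>0$ and $N_0$ and write, using that only the finitely many parameters $a_1,\dots,a_{\Ns}$ and $b_1,\dots,b_{\Nt}$ influence $G(\Ns,\Nt)$,
\begin{align*}
\ameas\!\left(n^{-1}G(\Ns,\Nt) \in (x,y)\right)
&= \E\!\left[\qmeas\!\left(n^{-1}G(\Ns,\Nt) \in (x,y)\right)\right] \\
&= \widetilde{\E}\!\left[\prod_{i=1}^{\Ns}\frac{d\alpha}{d\nu_1}(a_i)\prod_{j=1}^{\Nt}\frac{d\beta}{d\nu_2}(b_j)\,\qmeas\!\left(n^{-1}G(\Ns,\Nt)\in(x,y)\right)\right],
\end{align*}
where $\widetilde{\E}$ is expectation under $\tilde\mu$ (this already forces $\nu_1\ll\alpha$, $\nu_2\ll\beta$, consistent with finiteness of the entropies). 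Applying Jensen's inequality to $-\log$ of the right-hand side, restricted to the event where the Radon–Nikodym product is controlled, should produce $s\,\widetilde{\E}[\log\frac{d\nu_1}{d\alpha}(a)] + t\,\widetilde{\E}[\log\frac{d\nu_2}{d\beta}(b)] = s\En(\nu_1|\alpha)+t\En(\nu_2|\beta)$ in the exponential rate, up to the error from conditioning.

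The technical work is to make the last step precise. By the ergodic theorem under $\tilde\mu$, $\frac1n\sum_{i=1}^{\Ns}\log\frac{d\nu_1}{d\alpha}(a_i) \to s\En(\nu_1|\alpha)$ and similarly for the $\beta$-term, $\tilde\mu$-a.s.; so for the set $A_n$ of parameter sequences on which both partial sums are within $\varepsilon n$ of their limits, $\tilde\mu(A_n)\to 1$. On $A_n$ the Radon–Nikodym product is at least $e^{-n(s\En(\nu_1|\alpha)+t\En(\nu_2|\beta)+2\varepsilon)}$. Meanwhile, since $g_{\nu_1,\nu_2}(s,t)\in(x,y)$, the a.s.\ shape-function convergence (stated after \eqref{E28}) under the quenched measure with parameters drawn from $\nu_1,\nu_2$ gives $\qmeas(n^{-1}G(\Ns,\Nt)\in(x,y)) \to 1$ for $\tilde\mu$-a.e.\ parameter sequence; combined with a uniform integrability / dominated convergence argument over $A_n$ one gets $\widetilde{\E}[\mathbf 1_{A_n}\,\qmeas(n^{-1}G(\Ns,\Nt)\in(x,y))] \to 1$. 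Chaining these estimates,
\[
\ameas\!\left(n^{-1}G(\Ns,\Nt)\in(x,y)\right) \ \geq\ e^{-n(s\En(\nu_1|\alpha)+t\En(\nu_2|\beta)+2\varepsilon)}\cdot o(1)\cdot(\text{prob}\to 1),
\]
so $\limsup_n -\frac1n\log\ameas(\cdots) \le s\En(\nu_1|\alpha)+t\En(\nu_2|\beta)+2\varepsilon$. Letting $\varepsilon\downarrow 0$ and then taking the infimum over admissible $(\nu_1,\nu_2)$ finishes the proof.

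I expect the main obstacle to be the interchange-of-limits issue hidden in the sentence "$\qmeas(n^{-1}G\in(x,y))\to 1$ for a.e.\ parameter sequence." The shape theorem is stated for the \emph{fixed} stationary–ergodic base measure $\mu$, whereas here the quenched probability must be controlled for parameter sequences sampled from the product measure $\nu_1^{\otimes\bbN}\otimes\nu_2^{\otimes\bbN}$ and, crucially, uniformly enough over the event $A_n$ that the expectation over $A_n$ converges. One clean way around this: condition on the whole parameter sequence and use that, given $(\bfa,\bfb)$, $G(\Ns,\Nt)$ is a bounded-below function of independent weights, so a Chebyshev/Efron–Stein or a second-moment estimate gives $\qmeas(|n^{-1}G(\Ns,\Nt)-g_{\nu_1,\nu_2}(s,t)|>\delta)\to 0$ with a rate that is uniform over parameter sequences in a $\tilde\mu$-typical set; restricting $A_n$ further to such sequences (still of $\tilde\mu$-probability $\to 1$) legitimizes the dominated-convergence step. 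Care is also needed at the boundary: if $\En(\nu_i|\alpha)$ or $\En(\nu_i|\beta)$ is infinite the corresponding term in the infimum is $+\infty$ and there is nothing to prove, so one may assume both are finite, which in particular gives the absolute continuity needed to write the Radon–Nikodym derivatives above.
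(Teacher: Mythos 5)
Your proposal is correct, and it rests on the same mechanism as the paper's proof: replace the law of the relevant parameters by the i.i.d.\ tilted law $(\nu_1,\nu_2)$, use $g_{\nu_1,\nu_2}(s,t)\in(x,y)$ so that the tilted annealed probability of the event tends to one, and pay the entropy cost $s\En(\nu_1|\alpha)+t\En(\nu_2|\beta)$. Where you differ is in how the entropy cost is extracted. The paper never localizes to a typical set: after the change of measure it applies Jensen's inequality to $-\log$ of the tilted expectation, normalized by $\ameas_{\nu_1,\nu_2}\bigl(n^{-1}G(\lf ns\rf,\lf nt\rf)\in(x,y)\bigr)$, and controls the resulting complement term by switching back to $\E_{\alpha,\beta}$ and using the elementary bound $x\log x\ge -1/e$. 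You instead invoke the strong law of large numbers for $\sum_i\log\frac{d\nu_1}{d\alpha}(a_i)$ under the tilted measure and restrict to your good event $A_n$ on which the Radon--Nikodym product is at least $e^{-n(s\En(\nu_1|\alpha)+t\En(\nu_2|\beta)+2\varepsilon)}$; this is equally valid (the SLLN applies because finiteness of $\En(\nu_1|\alpha)$ together with $x\log^+(1/x)\le 1/e$ makes $\log\frac{d\nu_1}{d\alpha}$ integrable under $\nu_1$), and it trades the paper's $x\log x$ trick for an extra $\varepsilon$ that you remove at the end. Two small repairs. First, your change-of-measure identity should be an inequality, or be written with indicators of $\{\frac{d\nu_1}{d\alpha}>0\}$ and $\{\frac{d\nu_2}{d\beta}>0\}$ as the paper does: $\alpha\ll\nu_1$ need not hold, so equality can fail, but the direction $\ge$ is all you need. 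Second, the uniform-integrability/Efron--Stein discussion in your final paragraph is unnecessary: since $\qmeas(\cdot)\le 1$, one has
\begin{align*}
\widetilde{\E}\Bigl[\mathbf{1}_{A_n}\,\qmeas\bigl(n^{-1}G(\lf ns\rf,\lf nt\rf)\in(x,y)\bigr)\Bigr] \ \ge\ \ameas_{\nu_1,\nu_2}\bigl(n^{-1}G(\lf ns\rf,\lf nt\rf)\in(x,y)\bigr)-\tilde\mu(A_n^c)\ \longrightarrow\ 1,
\end{align*}
where the first term tends to one by the shape theorem applied to the stationary, separately ergodic product measure $\nu_1^{\otimes\bbN}\otimes\nu_2^{\otimes\bbN}$ — exactly the fact the paper itself relies on when it asserts that the tilted annealed probability converges to one.
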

The other bound needed to show that $n$ is the correct rate for certain left tail large deviations follows from essentially the same argument used to show that the quenched rate is strictly larger than $n$. This is discussed briefly after the proof of Lemma \ref{quenchedlb}. To show that there are rate $n$ annealed left tail large deviations it suffices to show that there exist $\nu_1 \in \sM^\alpha$ and $\nu_2\in \sM^\beta$ with $g_{\nu_1,\nu_2}(s,t) < g_{\alpha,\beta}(s,t)$. We give a simple proof that under mild conditions this is the case in Lemma \ref{sufconlower}. We expect that this mechanism is not sharp. 

\begin{ex} \label{notsharp}
Suppose that $\alpha = \frac{1}{2}\delta_1 + \frac{1}{2} \delta_2$ and $\beta = \delta_1$, and recall that $\sM^\alpha = \{p \delta_1 + (1-p) \delta_2: 0\leq p \leq 1\}$. For $0 \leq p \leq 1$, call $\alpha_p = p \delta_1 + (1-p) \delta_2$. Then $\{g_{\alpha_p,\beta}(1,9) : 0 \leq p \leq 1\}=\{5.\bar{3}\} \cup (5.5,8]$. The reason for the discontinuity in this example is that if $p > 0$, then the functional in (\ref{E28}) is minimized on the set $(-1, 1)$, but if $p = 0$, the minimization occurs on $(-2, 1)$. We have chosen $s=1,t=9$ so that the minimizer for the $p=0$ case occurs in $(-2 , -1)$. The bound one obtains from Lemma \ref{annealedub} in this example is infinite when applied to the interval $(5.4,5.5)$. The finite relative entropy perturbation of the $a_i$ parameters switching the distribution to $\delta_2$ turns this into a right tail large deviation.
\end{ex}

The next theorem connects quenched rate function and annealed right tail rate function through a variational problem. We expect that this result means that large deviations above the shape function in the annealed model with marginals $\alpha$ and $\beta$ can be viewed as a large deviation in the parameters $\{a_i\}_{i=1}^{\lf ns \rf}$ and $\{b_j\}_{j=1}^{\lf nt \rf}$ which affect the distribution of $G(\lf ns \rf, \lf nt \rf)$, followed by a deviation in the quenched model with these perturbed parameters. Our proof is purely analytic and does not show this interpretation directly. A similar, but stronger, connection was shown for random walk in a random environment by Comets, Gantert and Zeitouni in \cite{CometsGantertZeitouni}.
\begin{thm} \label{thm:annealedent}
For any $s,t > 0$ and $r > g(s,t)$,
\begin{align*}
\bbJ_{s,t}^{\alpha,\beta}(r) &=  \inf \limits_{\substack{\nu_1 \in \sM^\alpha \\ \nu_2 \in \sM^\beta}} \left\{\bfI_{s,t}^{\nu_1,\nu_2}(r) + s \En (\nu_1 | \alpha) + t \En (\nu_2 | \beta)\right\}.
\end{align*}
A minimizing pair $(\nu_1,\nu_2)$ exists and the equality
\begin{align*}
\bbJ_{s,t}^{\alpha,\beta}(r) &=  \bfI_{s,t}^{\nu_1,\nu_2}(r) + s \En (\nu_1 | \alpha) + t \En (\nu_2 | \beta)
\end{align*}
holds if and only if
\begin{align*}
\frac{d\nu_1}{d\alpha}(a) \simeq \frac{a+\zs+\lam}{a+\zs}, \qquad \frac{d\nu_2}{d\beta}(b) \simeq \frac{b-\zs}{b-\zs-\lam}
\end{align*}
where $\zs$ and $\lam$ are the unique $\zs,\lam$ with $\lam \in [0,\ubar{\alpha} + \ubar{\beta}], \zs \in [-\ubar{\alpha}, \ubar{\beta} - \lam]$ satisfying
\begin{align}\label{maxpair}
\bbJ_{s,t}^{\alpha,\beta}(r) = r \lam - s \log \E^\alpha\left[\frac{a+\zs+\lam}{a+\zs}\right] - t \log \E^\beta\left[\frac{b-\zs}{b-\zs-\lam}\right].
\end{align}
\end{thm}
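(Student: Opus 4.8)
The plan is to derive the identity from the variational formulas of Theorem~\ref{thm:Jcd} by inserting the Donsker--Varadhan (Gibbs) variational principle, which is exactly the tool that converts the ``$\log\E$'' appearing in $\bbJ^{\alpha,\beta}$ into the ``$\E\log$'' appearing in $\bfI^{\nu_1,\nu_2}$. Writing $R_{\alpha,\beta}=\{(\lambda,z):\lambda\in(0,\ubar{\alpha}+\ubar{\beta}],\ z\in[-\ubar{\alpha},\ubar{\beta}-\lambda]\}$ and, for $(\lambda,z)\in R_{\alpha,\beta}$, $f^{1}_{\lambda,z}(a)=\log\tfrac{a+z+\lambda}{a+z}$ and $f^{2}_{\lambda,z}(b)=\log\tfrac{b-z}{b-z-\lambda}$, I would first record that $\E^{\alpha}[e^{f^{1}_{\lambda,z}}]=\E^{\alpha}[\tfrac{a+z+\lambda}{a+z}]<\infty$ on the relevant range, so that the Gibbs principle gives $s\log\E^{\alpha}[e^{f^{1}_{\lambda,z}}]=\sup_{\nu_1}\{s\E^{\nu_1}f^{1}_{\lambda,z}-s\En(\nu_1|\alpha)\}$ and likewise for $\beta$. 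Substituting into the formula for $\bbJ^{\alpha,\beta}_{s,t}(r)$ (valid since $r>g(s,t)$) exhibits
\[
\bbJ^{\alpha,\beta}_{s,t}(r)=\sup_{(\lambda,z)\in R_{\alpha,\beta}}\ \inf_{\nu_1,\nu_2}\ \Phi(\lambda,z,\nu_1,\nu_2),\qquad \Phi:=r\lambda-s\E^{\nu_1}f^{1}_{\lambda,z}-t\E^{\nu_2}f^{2}_{\lambda,z}+s\En(\nu_1|\alpha)+t\En(\nu_2|\beta).
\]

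For the ``$\le$'' inequality I would not invoke any minimax theorem: the elementary bound $\sup\inf\le\inf\sup$ gives $\bbJ^{\alpha,\beta}_{s,t}(r)\le\inf_{\nu_1,\nu_2}\sup_{R_{\alpha,\beta}}\Phi$, and since $\nu_1\ll\alpha$, $\nu_2\ll\beta$ force $\ubar{\nu_1}\ge\ubar{\alpha}$ and $\ubar{\nu_2}\ge\ubar{\beta}$, the box $R_{\alpha,\beta}$ is contained in the box $R_{\nu_1,\nu_2}$ over which $\bfJ^{\nu_1,\nu_2}_{s,t}(r)$ is defined. Hence for every pair, $\sup_{R_{\alpha,\beta}}\{r\lambda-s\E^{\nu_1}f^{1}_{\lambda,z}-t\E^{\nu_2}f^{2}_{\lambda,z}\}\le\bfJ^{\nu_1,\nu_2}_{s,t}(r)\le\bfI^{\nu_1,\nu_2}_{s,t}(r)$, so $\bbJ^{\alpha,\beta}_{s,t}(r)\le\bfI^{\nu_1,\nu_2}_{s,t}(r)+s\En(\nu_1|\alpha)+t\En(\nu_2|\beta)$ for all $(\nu_1,\nu_2)$.

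For ``$\ge$'' and to produce a minimizer, I would take $(\lam,\zs)$ to be the maximizing pair in~(\ref{maxpair}) --- its existence, uniqueness, location in $R_{\alpha,\beta}$, and the fact that $r>g(s,t)$ forces $\lam>0$ being supplied by the extremizer analysis of Section~\ref{Sreg} --- and let $\nu_1^{*},\nu_2^{*}$ be the associated Gibbs tilts $\tfrac{d\nu_1^{*}}{d\alpha}\simeq\tfrac{a+\zs+\lam}{a+\zs}$, $\tfrac{d\nu_2^{*}}{d\beta}\simeq\tfrac{b-\zs}{b-\zs-\lam}$, which realize equality in the two Gibbs bounds and lie in $\sM^{\alpha},\sM^{\beta}$. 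The crux is then to show that $(\lam,\zs)$ is also the maximizing pair in the variational formula for $\bfJ^{\nu_1^{*},\nu_2^{*}}_{s,t}(r)$: differentiating $\log\E^{\alpha}[e^{f^{1}_{\lambda,z}}]$ in $\lambda$ or $z$ produces the $\nu_1^{*}$-expectation of the corresponding derivative of $f^{1}$ (because $\nu_1^{*}$ is the $e^{f^{1}_{\lam,\zs}}$-tilt of $\alpha$), so after this rewriting the first-order conditions characterizing the annealed maximizer $(\lam,\zs)$ become precisely the first-order conditions for the quenched problem with marginals $\nu_1^{*},\nu_2^{*}$ (i.e.\ the pair of equations displayed just above~(\ref{E122}), read against $\nu_1^{*},\nu_2^{*}$); and since the quenched inner optimization over $z$ is convex in $z$ while the outer optimization over $\lambda$ is the Legendre transform of the convex Lyapunov exponent $\bfL^{\nu_1^{*},\nu_2^{*}}_{s,t}$ of Theorem~\ref{T1}, a point solving both conditions is the global maximizer. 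Granting this, $\bfI^{\nu_1^{*},\nu_2^{*}}_{s,t}(r)=\bfJ^{\nu_1^{*},\nu_2^{*}}_{s,t}(r)=r\lam-s\E^{\nu_1^{*}}f^{1}_{\lam,\zs}-t\E^{\nu_2^{*}}f^{2}_{\lam,\zs}$ (the first equality because $\lam>0$ forces $r>g_{\nu_1^{*},\nu_2^{*}}(s,t)$), and adding $s\En(\nu_1^{*}|\alpha)+t\En(\nu_2^{*}|\beta)$ and using the Gibbs equalities returns $r\lam-s\log\E^{\alpha}[\tfrac{a+\zs+\lam}{a+\zs}]-t\log\E^{\beta}[\tfrac{b-\zs}{b-\zs-\lam}]=\bbJ^{\alpha,\beta}_{s,t}(r)$. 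Combined with ``$\le$'' this proves the identity, exhibits $(\nu_1^{*},\nu_2^{*})$ as a minimizer, and yields the ``if'' direction. For ``only if'' I would trace equality back through the ``$\le$'' chain: any minimizing pair satisfies $\Phi(\lam,\zs,\nu_1,\nu_2)=\bbJ^{\alpha,\beta}_{s,t}(r)=\inf_{\nu}\Phi(\lam,\zs,\nu)$, hence minimizes $\nu\mapsto\Phi(\lam,\zs,\nu)$; as $\nu_1\mapsto-s\E^{\nu_1}f^{1}_{\lam,\zs}+s\En(\nu_1|\alpha)$ is strictly convex by strict convexity of relative entropy, its minimizer is unique and equals $\nu_1^{*}$, and similarly $\nu_2=\nu_2^{*}$.

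I expect the main obstacle to be precisely this identification step together with the boundary cases in which $(\lam,\zs)$ lies on $\partial R_{\alpha,\beta}$ (for instance $\zs=-\ubar{\alpha}$, $\zs+\lam=\ubar{\beta}$, or $\lam=\ubar{\alpha}+\ubar{\beta}$), where the first-order conditions weaken to one-sided inequalities and one must separately verify that $\nu_1^{*}\in\sM^{\alpha}$ and $\nu_2^{*}\in\sM^{\beta}$ (finiteness of the tilted expectations) and that $R_{\alpha,\beta}$ and $R_{\nu_1^{*},\nu_2^{*}}$ share that boundary point, so that the comparison in the ``$\ge$'' step still closes. This information is exactly what the regularity results of Section~\ref{Sreg} provide, so I would invoke them rather than reprove anything; the Gibbs-principle manipulations and the convexity facts for the quenched variational problem are otherwise routine.
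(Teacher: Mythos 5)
Your proposal is correct and follows essentially the same route as the paper's proof: the variational (Donsker--Varadhan) characterization of relative entropy applied to the formula of Theorem \ref{thm:Jcd}, the elementary $\sup\inf\le\inf\sup$ bound combined with $\ubar{\nu}_1\ge\ubar{\alpha}$, $\ubar{\nu}_2\ge\ubar{\beta}$ to enlarge the optimization region, and then the identification of the annealed first-order conditions at $(\lam,\zs)$ with the quenched first-order conditions for the tilted marginals, with the boundary cases delegated to the extremizer results of Section \ref{Sreg} (Lemmas \ref{L3}, \ref{L9} and Corollary \ref{L10}), exactly as the paper does. The only minor variation is your ``only if''/uniqueness step, where you fix $(\lam,\zs)$ and use strict convexity of $\nu_1\mapsto -s\E^{\nu_1}f^{1}_{\lam,\zs}+s\En(\nu_1|\alpha)$, whereas the paper invokes strict convexity of $(\nu_1,\nu_2)\mapsto\bfI_{s,t}^{\nu_1,\nu_2}(r)+s\En(\nu_1|\alpha)+t\En(\nu_2|\beta)$; both rest on the same strict convexity of relative entropy and are equally valid.
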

\noindent It is natural to conjecture that this variational connection describes all rate $n$ annealed large deviations, rather than just annealed right tail large deviations. We have been unable to prove this result.

The next result concerns the regularity of our rate functions. Our rate functions are convex and differentiable to the right of $g(s,t)$, but we note that for certain choices of $\alpha$ and $\beta$ they can have linear segments; see Lemma \ref{L1} and the comments preceding it. 
\begin{thm}
\label{T4}
For any $s, t > 0$, both $\bfJ_{s, t}$ and $\bbJ_{s, t}$ are continuously differentiable on $[g(s, t), +\infty)$. 
\end{thm}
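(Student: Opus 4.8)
The plan is to treat $\bfJ_{s,t}$ and $\bbJ_{s,t}$ in parallel, reducing continuous differentiability to strict convexity of the Lyapunov exponents of Theorem~\ref{T1}. Write $g=g(s,t)$, and abbreviate $\bfL=\bfL_{s,t}$, $\bfJ=\bfJ_{s,t}$, with blackboard analogues for the annealed quantities.

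\emph{Reduction.} By Theorem~\ref{thm:Jcd}, $\bfJ$ is a supremum of affine functions of $r$ — each $r\mapsto r\lambda-F(\lambda,z)$ with $F(\lambda,z)=s\E\log\frac{a+z+\lambda}{a+z}+t\E\log\frac{b-z}{b-z-\lambda}\ge 0$ — extended by $0$ on $(-\infty,g)$; hence $\bfJ$ is convex, lower semicontinuous, and finite on $\bbR$ (since $\bfJ(r)\le r(\ubar{\alpha}+\ubar{\beta})$). The variational domain $\{(\lambda,z):0<\lambda\le\ubar{\alpha}+\ubar{\beta},\ -\ubar{\alpha}\le z\le\ubar{\beta}-\lambda\}$ is compact and the objective is upper semicontinuous on it, so the supremum is attained and its argmax is upper hemicontinuous in $r$; a standard argument (also using that a convex function is differentiable off a countable set) then shows that $\bfJ$ is differentiable at $r$ exactly when the maximizing value of $\lambda$ is unique, with $\bfJ'(r)$ that value. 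Since the derivative of a convex function is continuous wherever it exists, it therefore suffices to prove: (i) for each $r>g$ the maximizing $\lambda$ is unique; and (ii) the maximizing $\lambda$ tends to $0$ as $r\downarrow g$ (matching the left derivative $0$ at $g$, and so giving differentiability and continuity of $\bfJ'$ at $g$; below $g$, $\bfJ\equiv 0$ is trivially smooth). The same reduction applies to $\bbJ$.

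\emph{Reduction to strict convexity.} Using Theorem~\ref{T1}, $\bfL(\lambda)=\inf_z F(\lambda,z)$, and both (i) and (ii) follow once $\bfL$ is strictly convex on $(0,\ubar{\alpha}+\ubar{\beta})$: strict convexity makes $r\lambda-\bfL(\lambda)$ strictly concave, hence with a unique maximizing $\lambda$ (and by convexity the joint argmax has the same $\lambda$-coordinate), giving (i); and strict convexity with $\bfL(0)=0$ and $\bfL'(0^{+})=g$ (read off from the explicit formula, using $\E[a+b]<\infty$) forces $\bfL(\lambda)>g\lambda$ for $0<\lambda<\ubar{\alpha}+\ubar{\beta}$, so at $r=g$ the only maximizer is $\lambda=0$, and monotonicity of the maximizer in $r$ yields (ii). To prove strict convexity, substitute $w=z$ and use
\[
s\E\log\frac{a+z+\lambda}{a+z}+t\E\log\frac{b-z}{b-z-\lambda}=\Psi(z+\lambda)-\Psi(z)=\int_{z}^{z+\lambda}G(w)\,dw,
\]
where $\Psi(x)=s\E\log(a+x)-t\E\log(b-x)$ and $G=\Psi'$, i.e. $G(w)=s\E\frac1{a+w}+t\E\frac1{b-w}$; this gives $\bfL(\lambda)=\inf\{\int_v^{v+\lambda}G(w)\,dw:v\in[-\ubar{\alpha},\ubar{\beta}-\lambda]\}$, with $G$ real-analytic, finite, and strictly convex on $(-\ubar{\alpha},\ubar{\beta})$, having a unique minimizer $z_0$ (the minimizer in \eqref{E28}, with $G(z_0)=g$). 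One checks that the optimal interval $[v(\lambda),v(\lambda)+\lambda]$ always contains $z_0$ (whence also $\bfL'(0^{+})=g$), and computes $\bfL'$ where it exists by splitting into the cases in which no constraint, the constraint $v=-\ubar{\alpha}$, or the constraint $v+\lambda=\ubar{\beta}$ is active: in the first Danskin's theorem plus the first-order condition give $\bfL'(\lambda)=G(v(\lambda)+\lambda)=G(v(\lambda))$, and in the others $\bfL'(\lambda)=G(-\ubar{\alpha}+\lambda)$ or $G(\ubar{\beta}-\lambda)$ by direct differentiation. In every case $\bfL'$ is $G$ evaluated at an endpoint of the optimal interval, that endpoint moves strictly monotonically in $\lambda$ (in the interior case by implicitly differentiating $G(v+\lambda)=G(v)$), and $G$ is strictly monotone on each side of $z_0$; so $\bfL'$ is strictly increasing off a countable set, which for a convex function gives strict convexity.

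\emph{Annealed case and obstacle.} For $\bbL(\lambda)=\inf_z\{s\log(1+\lambda\E^{\alpha}\frac1{a+z})+t\log(1+\lambda\E^{\beta}\frac1{b-z-\lambda})\}$ I would run the same scheme: a computation gives $\bbL'(\lambda)=s\,\E^{\widehat\alpha_\lambda}[\frac1{a+u(\lambda)}]+t\,\E^{\widehat\beta_\lambda}[\frac1{b-u(\lambda)}]$ with $u(\lambda)=z(\lambda)+\lambda$ and $\widehat\alpha_\lambda,\widehat\beta_\lambda$ the exponential tilts of $\alpha,\beta$ appearing in Theorem~\ref{thm:annealedent}, and one shows this is strictly increasing. (Alternatively, for $\bbJ$ step (i) may be skipped: Theorem~\ref{thm:annealedent} already gives uniqueness of the realizing pair $(\zs,\lam)$ in \eqref{maxpair} for $r>g$, leaving only $\bbL(\lambda)>g\lambda$ for small $\lambda>0$ to be checked.) I expect the main obstacle to be the strict convexity of the Lyapunov exponents — in particular the bookkeeping of the boundary regimes of the inner minimization over $z$, where the clean ``integral of a strictly convex profile'' picture degenerates, and above all the annealed case, where the profile $G$ is no longer fixed: both the tilted measures and the evaluation point $u(\lambda)$ move with $\lambda$, and the monotonicity of $\bbL'$ must be recovered from their combined motion.
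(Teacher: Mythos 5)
Your quenched argument is essentially correct and is, at bottom, the paper's own route: the paper proves uniqueness and regularity of the inner minimizer $\zs(\lambda)$ (Lemma \ref{L3}), deduces that $\bfL_{s,t}$ is $C^1$ with $\bfL_{s,t}''>0$ (Lemma \ref{L8}), and then runs the Legendre duality to get a unique, continuous maximizer $\lam(r)$ with $\bfJ_{s,t}'(r)=\lam(r)$ (Lemma \ref{L1}), which is Theorem \ref{T4}. Your representation $F(z,\lambda)=\int_z^{z+\lambda}G(w)\,dw$ with $G(w)=s\E[(a+w)^{-1}]+t\E[(b-w)^{-1}]$ is a clean repackaging of exactly the structural identity the paper exploits ($\partial_\lambda F(z,\lambda)=G(z+\lambda)$, whence $\partial_\lambda^2F=\partial_\lambda\partial_zF$), and your endpoint/boundary-regime case analysis parallels the $\lambda_1,\lambda_2$ analysis of Lemma \ref{L3}. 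Modulo routine verifications you gesture at (the optimal window contains the minimizer $\zeta$ of (\ref{E28}); $\bfL_{s,t}'(0^+)=g(s,t)$; matching across the regime transition), that half is fine.

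The gap is the annealed half, which the theorem also asserts. For $\bbJ_{s,t}$ your scheme requires strict convexity of $\bbL_{s,t}$ (equivalently, strict monotonicity of $\bbL_{s,t}'$), and this is precisely what you do not prove: you write the correct formula for $\bbL_{s,t}'$ (it agrees with (\ref{E85})), but then explicitly defer the monotonicity as an ``obstacle,'' and the fixed-profile mechanism that drove your quenched proof genuinely fails there, since $\partial_\lambda F$ for the annealed $F$ in (\ref{E143}) is not a function of $z+\lambda$ alone — both the tilted measures and the evaluation point move with $\lambda$, as you note. The paper supplies this missing piece through Lemma \ref{L9} (the annealed analogue of Lemma \ref{L3}, with boundary criteria (\ref{E144})--(\ref{E145})) and the corresponding second-derivative computation in Lemma \ref{L8}, where positivity of $\bbL_{s,t}''$ is obtained from $\partial_z^2F\,\partial_\lambda^2F>(\partial_\lambda\partial_zF)^2$ at the minimizer together with separate computations in the constrained regimes; some such direct argument must be carried out for your proof to cover $\bbJ_{s,t}$. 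Your proposed fallback — importing uniqueness of the pair $(\zs,\lam)$ from Theorem \ref{thm:annealedent} — is not admissible either, because in the paper the proof of Theorem \ref{thm:annealedent} itself invokes Lemmas \ref{L9} and \ref{L1}, i.e.\ exactly the extremizer and regularity analysis underlying Theorem \ref{T4}, so using it here is circular. Until strict convexity (and the $C^1$ matching at the annealed regime transition $\lambda_0$) is established for $\bbL_{s,t}$, the annealed statement of Theorem \ref{T4} remains unproved in your write-up.
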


Finally, we describe the leading order asymptotics of $\bfJ_{s, t}(r)$ and $\bbJ_{s, t}(r)$ as $r \downarrow g(s, t)$ and comment on the implications for the fluctuations of the last-passage times. Let $\zeta$ denote the unique minimizer of (\ref{E28}).
 
\begin{thm}
\label{T3}
For any $s, t > 0$, as $\epsilon \downarrow 0$, 
\[
\bfJ_{s, t}(g(s, t) + \epsilon) = 
\begin{cases}
\left(-s\E\left[\dfrac{2}{(a-\ubar{\alpha})^2}\right] + t\E\left[\dfrac{2}{(b+\ubar{\alpha})^2}\right]\right)^{-1} \epsilon^{2} + o(\epsilon^{2}) &\text{ if } s/t < c_1 \\
\dfrac{2}{3}\left(s\E\left[\dfrac{1}{(a-\ubar{\alpha})^3}\right] + t\E\left[\dfrac{1}{(b+\ubar{\alpha})^3}\right]\right)^{-1/2}\epsilon^{3/2} + o(\epsilon^{3/2}) &\text{ if } s/t = c_1 \\ 
&\text{ and } \E[(a-\ubar{\alpha})^{-3}] < \infty \\
\dfrac{4}{3}\left(s\E\left[\dfrac{1}{(a+\zeta)^3}\right] + t\E\left[\dfrac{1}{(b-\zeta)^3}\right]\right)^{-1/2}\epsilon^{3/2} + o(\epsilon^{3/2}) &\text{ if } c_1 < s/t < c_2 \\ 
\dfrac{2}{3}\left(s\E\left[\dfrac{1}{(a+\ubar{\beta})^3}\right] + t\E\left[\dfrac{1}{(b-\beta)^3}\right]\right)^{-1/2}\epsilon^{3/2} + o(\epsilon^{3/2}) &\text{ if } s/t = c_2 \\ 
&\text{ and } \E[(b-\ubar{\beta})^{-3}] < \infty\\
\left(s\E\left[\dfrac{2}{(a+\ubar{\beta})^2}\right] - t\E\left[\dfrac{2}{(b-\ubar{\beta})^2}\right]\right)^{-1} \epsilon^{2} + o(\epsilon^{2}) &\text{ if } s/t > c_2 \\
\end{cases}
\]
\end{thm}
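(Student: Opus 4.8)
I would start from the identity $\bfJ_{s,t}(r) = \sup_{\lambda > 0}\{r\lambda - \bfL_{s,t}(\lambda)\}$ valid for $r \ge g(s,t)$, which is immediate from Theorems~\ref{T1} and~\ref{thm:Jcd} (with $\bfL_{s,t}(\lambda) = +\infty$ for $\lambda > \ubar{\alpha}+\ubar{\beta}$). Since $\bfL_{s,t}$ is a pointwise limit of logarithmic moment generating functions it is convex, and Jensen's inequality gives $\bfL_{s,t}(\lambda) \ge \lambda\,g(s,t)$; hence $h(\lambda) := \bfL_{s,t}(\lambda) - \lambda\,g(s,t)$ is a nonnegative convex function vanishing at $0$. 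The plan is to reduce the whole statement to a local expansion: in each regime one should have
\[
h(\lambda) = c_k\,\lambda^{k} + o(\lambda^{k}) \qquad (\lambda \downarrow 0)
\]
for an explicit $c_k > 0$, with $k = 2$ when $s/t < c_1$ or $s/t > c_2$ and $k = 3$ in the three remaining cases. Granting this, a soft Tauberian argument finishes the proof. For the lower bound I would evaluate the supremum at $\lambda = (\epsilon/(kc_k))^{1/(k-1)}$. For the upper bound, fix small $\Lambda > 0$; since $h$ is convex with $h(\Lambda) > 0 = h(0)$, for every $\epsilon < h(\Lambda)/\Lambda$ the supremum is attained over $(0,\Lambda]$, on which $h(\lambda) \ge (c_k - \eta(\Lambda))\lambda^{k}$ with $\eta(\Lambda)\downarrow 0$ as $\Lambda\downarrow 0$; optimizing $\epsilon\lambda - (c_k-\eta(\Lambda))\lambda^{k}$ and then sending $\Lambda\downarrow 0$ matches the lower bound. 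Both bounds give $\bfJ_{s,t}(g(s,t)+\epsilon) = (k-1)k^{-k/(k-1)}c_k^{-1/(k-1)}\epsilon^{k/(k-1)} + o(\epsilon^{k/(k-1)})$, and $(k-1)k^{-k/(k-1)}c_k^{-1/(k-1)}$ equals $1/(4c_2)$ when $k=2$ and $\tfrac23(3c_3)^{-1/2}$ when $k=3$.

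\textbf{Reducing to one variable.} Writing $F(\lambda,z) = s\,\E\log\frac{a+z+\lambda}{a+z} + t\,\E\log\frac{b-z}{b-z-\lambda}$, formula~\eqref{E146} gives $\bfL_{s,t}(\lambda) = \min_{z\in[-\ubar{\alpha},\,\ubar{\beta}-\lambda]}F(\lambda,z)$ for small $\lambda > 0$. I would exploit two structural facts. First, $F(\lambda,\cdot)$ is strictly convex:
\[
\partial_z^2 F(\lambda,z) = s\,\E\left[\tfrac{1}{(a+z)^2}-\tfrac{1}{(a+z+\lambda)^2}\right] + t\,\E\left[\tfrac{1}{(b-z-\lambda)^2}-\tfrac{1}{(b-z)^2}\right] > 0 .
\]
Second, $\partial_z F(\lambda,z) = \lambda\bigl(-s\,\E\tfrac{1}{(a+z)(a+z+\lambda)} + t\,\E\tfrac{1}{(b-z)(b-z-\lambda)}\bigr)$ with the bracketed quantity strictly increasing in $z$. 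So the minimizer $z^{\ast}(\lambda)$ is the unique interior critical point when one exists and an endpoint otherwise; comparing the sign of the bracket at $z=-\ubar{\alpha}$ and $z=\ubar{\beta}-\lambda$ with the definitions~\eqref{E67} of $c_1,c_2$, and using that $A(z) := s\,\E[(a+z)^{-1}] + t\,\E[(b-z)^{-1}] = g_z(s,t)$ is strictly convex with minimizer $\zeta$, one finds $z^{\ast}(\lambda) = -\ubar{\alpha}$ for $s/t\le c_1$, $z^{\ast}(\lambda) = \ubar{\beta}-\lambda$ for $s/t\ge c_2$, and $z^{\ast}(\lambda)\to\zeta$ interior for $c_1 < s/t < c_2$.

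\textbf{The expansions.} In the two strict linear regimes, $\bfL_{s,t}(\lambda) = F(\lambda,-\ubar{\alpha})$ (resp. $F(\lambda,\ubar{\beta}-\lambda)$), an explicit expectation of a logarithm, and I would Taylor expand under $\E$ (the moment conditions making the remainders $o(\lambda^{k})$ are precisely those appearing in the statement). For $s/t < c_1$ this gives $h(\lambda) = \tfrac12\bigl(t\,\E[(b+\ubar{\alpha})^{-2}] - s\,\E[(a-\ubar{\alpha})^{-2}]\bigr)\lambda^{2} + o(\lambda^{2})$, with leading coefficient positive precisely because $s/t < c_1$ (and $\E[(a-\ubar{\alpha})^{-2}]<\infty$ since $c_1>0$); $s/t > c_2$ is symmetric. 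For $s/t = c_1$ the $\lambda^2$ coefficient vanishes identically (that is the meaning of the equality $s/t=c_1$), and under the hypothesis $\E[(a-\ubar{\alpha})^{-3}]<\infty$ the cubic remainder is $o(\lambda^3)$, leaving $h(\lambda) = \tfrac13\bigl(s\,\E[(a-\ubar{\alpha})^{-3}] + t\,\E[(b+\ubar{\alpha})^{-3}]\bigr)\lambda^{3} + o(\lambda^{3})$; $s/t = c_2$ is symmetric. For $c_1 < s/t < c_2$, $z^{\ast}(\lambda)$ solves $\partial_z F(\lambda,z^{\ast})=0$, but $\partial_z^2 F(0,\cdot)\equiv 0$, so the implicit function theorem must instead be applied to $\Phi(\lambda,z):=\lambda^{-1}\partial_z F(\lambda,z)$, extended by $\Phi(0,z)=A'(z)$, which is smooth near $(0,\zeta)$ (all denominators are bounded away from $0$) with $\partial_z\Phi(0,\zeta)=A''(\zeta)>0$; this yields $z^{\ast}(\lambda) = \zeta - \tfrac12\lambda + o(\lambda)$, and substituting into $F$, expanding to third order, and using $A'(\zeta)=0$ (so the $\lambda^2$ term cancels) gives $h(\lambda) = \tfrac1{12}\bigl(s\,\E[(a+\zeta)^{-3}] + t\,\E[(b-\zeta)^{-3}]\bigr)\lambda^{3} + o(\lambda^{3})$, all expectations finite since $\zeta\in(-\ubar{\alpha},\ubar{\beta})$.

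\textbf{Conclusion, and the hard part.} Inserting these five values of $c_k$ into $(k-1)k^{-k/(k-1)}c_k^{-1/(k-1)}\epsilon^{k/(k-1)}$ reproduces the five lines of the theorem: $1/(4c_2)$ with $c_2 = \tfrac12(t\,\E[(b+\ubar{\alpha})^{-2}]-s\,\E[(a-\ubar{\alpha})^{-2}])$ for the first line (and symmetrically the last), $\tfrac23(3c_3)^{-1/2}$ with $c_3 = \tfrac13(s\,\E[(a-\ubar{\alpha})^{-3}]+t\,\E[(b+\ubar{\alpha})^{-3}])$ for the second (and symmetrically the fourth), and $\tfrac23(3c_3)^{-1/2}$ with $c_3 = \tfrac1{12}(s\,\E[(a+\zeta)^{-3}]+t\,\E[(b-\zeta)^{-3}])$ for the third. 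The hard part will be the interior regime $c_1 < s/t < c_2$: getting the third-order expansion of $\bfL_{s,t}$ there requires the non-standard implicit function theorem step for $z^{\ast}(\lambda)$ (forced by $\partial_z^2 F(0,\cdot)\equiv 0$) together with careful bookkeeping of the cancellation of the quadratic term; a secondary, routine nuisance is the dominated-convergence estimates in the interface regimes needed to upgrade the Taylor remainders to genuine $o(\lambda^{3})$ under exactly the stated third-moment hypotheses.
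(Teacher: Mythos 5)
Your proposal is correct, and all five of your coefficients $c_k$ reproduce the constants in the theorem after the Legendre-transform conversion; the core analytic input (location of the minimizer $z^{\ast}(\lambda)$ at an endpoint in the linear/interface regimes and in the interior with $z^{\ast}(\lambda)=\zeta-\lambda/2+o(\lambda)$, plus the positivity of the quadratic coefficient exactly when $s/t<c_1$ or $s/t>c_2$) is the same as in the paper. Where you differ is the transfer mechanism from $\bfL_{s,t}$ to $\bfJ_{s,t}$: the paper works at the level of derivatives, using the relations $\bfJ_{s,t}'(r)=\lam(r)$ and $\bfL_{s,t}'(\lam(r))=r$ from Lemma \ref{L1} (which rest on the regularity analysis of Section \ref{Sreg}), expands $\bfL_{s,t}'(\lambda)=g_{\zs(\lambda)+\lambda}(s,t)$ only to second order, inverts, and integrates; you instead expand $\bfL_{s,t}$ itself to order $\lambda^{k}$ (one order higher, e.g.\ via the identity $\bfL_{s,t}(\lambda)=\min_{z}\int_{z}^{z+\lambda}g_{w}(s,t)\,dw$ implicit in your substitution step) and then extract the asymptotics of $\sup_{\lambda>0}\{\epsilon\lambda-h(\lambda)\}$ directly, using convexity of $h$ to localize the supremum to $(0,\Lambda]$. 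Your route buys independence from the differentiability of $\bfJ_{s,t}$ and from the first-order conditions of Lemma \ref{L1}, at the price of the extra order of Taylor expansion and the Tauberian bookkeeping; the paper's route buys a shorter expansion but leans on the smoothness of $\zs$, $\bfL_{s,t}'$ and $\lam$. Two points you flag as routine do need to be written out: (i) that the endpoint is the minimizer for \emph{all} small $\lambda>0$ when $s/t\le c_1$ (resp.\ $s/t\ge c_2$), which follows since $-s\E\bigl[\tfrac{1}{(a-\ubar{\alpha})(a-\ubar{\alpha}+\lambda)}\bigr]+t\E\bigl[\tfrac{1}{(b+\ubar{\alpha})(b+\ubar{\alpha}-\lambda)}\bigr]$ is nondecreasing in $\lambda$ and has nonnegative limit as $\lambda\downarrow0$; and (ii) the truncation/dominated-convergence argument showing the third-order Taylor remainder of $\E\log$ is $o(\lambda^{3})$ under only the third-moment hypotheses in the interface cases — both are elementary and match the level of care the paper itself takes via the continuity of $\partial_z^2 g_z(s,t)$ up to the endpoints under (\ref{E81.2}).
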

\noindent We do not have an intuitive explanation for the presence of an extra factor of $\frac{1}{2}$ in the boundary cases $\frac{s}{t}=c_1,c_2$. 

The results of Theorem \ref{T3} in the concave region $S$ and the boundary lines $\frac{s}{t} = c_1$ or $c_2$ are heuristically consistent with the expectation of KPZ type fluctuations. For example, to see this set
\begin{align*}
C &= s\E\left[\dfrac{1}{(a+\zeta)^3}\right] + t\E\left[\dfrac{1}{(b-\zeta)^3}\right] = \frac{1}{2} \partial_z^2 g_z(s,t) \big|_{z = \zeta}
\end{align*}
and assume that our asymptotic result in the concave region hold for finite $n$. Then for $(s,t) \in S$ and large $r$, we expect to see
\begin{align*}
\qmeas(G(\lf ns \rf, \lf nt \rf) - n g(s,t) \geq n^\frac{1}{3} C^\frac{1}{3} r) \approx \text{exp}\left\{-\frac{4}{3}C^{-\frac{1}{2}} (C^\frac{1}{3} n^{-\frac{2}{3}} r)^{\frac{3}{2}}n\right\} = e^{-\frac{4}{3} r^\frac{3}{2}},
\end{align*}
which agrees the leading order large $r$ asymptotics of the Tracy-Widom GUE distribution \cite[Exercise 3.8.3]{AGZ10}. Note that the choice of normalizing constant $C$ in this argument is not arbitrary. Taking $C = \frac{1}{2}\partial_z^2 g(s,t)|_{z = \zeta}$ is consistent with the normalizing constants needed to see Tracy-Widom GUE limits in, for example, \cite[Theorem 1.6]{Johansson} (this is the case $\alpha,\beta \sim \delta_{\frac{1}{2}}$) and in \cite[Theorem 1.3]{BorodinCorwinFerrari}. In the latter case, this was shown to be the constant arising from the KPZ scaling theory in \cite{Spohn}. We also remark that the centering in this argument is likely not correct. As in \cite[Theorem 3]{GTW}, we expect that the correct centering should be  $n$ times the shape function with $\alpha$ and $\beta$ given by the empirical distribution of the parameters $\{a_i\}_{i=1}^{\lf ns \rf}$ and $\{b_j\}_{j=1}^{\lf nt \rf}$ rather than $n g(s,t)$. This new shape function is not random with respect to $\qmeas$ and converges to $g(s,t)$ for almost every realization of the environment. Continuity of the rate function then explains why this difference does not appear at the level of right tail large deviations. The same heuristic suggests that when $E[(a-\ubar{\alpha})^{-3}] < \infty$ or $E[(b-\ubar{\beta})^{-3}]<\infty$, we should expect KPZ type fluctuations in the critical directions $s/t = c_1$ or $s/t = c_2$, though we do not conjecture the precise limiting distribution in these cases. We also do not address the cases when $E[(a-\ubar{\alpha})^{-2}] < \infty$ but $E[(a-\ubar{\alpha})^{-3}] =\infty$ or  $E[(b-\ubar{\beta})^{-2}]<\infty$ but $E[(b-\ubar{\beta})^{-3}]=\infty$, though these are interesting questions.

\begin{thm}
\label{T2}
Suppose that $\alpha$ and $\beta$ are not both degenerate. For any $s, t > 0$, as $\epsilon \downarrow 0$, 
\[
\bbJ_{s, t}(g(s, t) + \epsilon) = 
\begin{cases}
\left(-s\E\left[\dfrac{1}{a-\ubar{\alpha}}\right]^2 + t\Var\left[\dfrac{1}{b+\ubar{\alpha}}\right]+t\E\left[\dfrac{1}{(b+\ubar{\alpha})^2}\right]\right)^{-1} \epsilon^2/2+o(\epsilon^2) &\text{ if } s/t < c_1 \\
\left(s\Var\left[\dfrac{1}{a+\zeta}\right] + t\Var\left[\dfrac{1}{b-\zeta}\right]\right)^{-1} \epsilon^{2}/2 + o(\epsilon^{2}) &\text{ if } c_1 \le s/t \le c_2 \\ 
\left(s\Var\left[\dfrac{1}{a+\ubar{\beta}}\right] +s\E\left[\dfrac{1}{(a+\ubar{\beta})^2}\right]- t\E\left[\dfrac{1}{b-\ubar{\beta}}\right]^2\right)^{-1} \epsilon^2/2+o(\epsilon^2) &\text{ if } s/t > c_2 
\end{cases}
\]
\end{thm}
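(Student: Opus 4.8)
The plan is to reduce the theorem to a second-order analysis of the annealed Lyapunov exponent $\bbL_{s,t}$ near $\lambda=0$, followed by a routine Legendre-transform step. By Theorem \ref{thm:Jcd}, for $r\ge g(s,t)$ we have $\bbJ_{s,t}(r)=\sup_{\lambda>0}\{r\lambda-\bbL_{s,t}(\lambda)\}$, and by Theorem \ref{T1}, $\bbL_{s,t}(\lambda)=\inf_{z\in[-\ubar{\alpha},\,\ubar{\beta}-\lambda]}\ell(z,\lambda)$ where, writing $\frac{a+z+\lambda}{a+z}=1+\frac{\lambda}{a+z}$ and $\frac{b-z}{b-z-\lambda}=1+\frac{\lambda}{b-z-\lambda}$,
\[
\ell(z,\lambda)=s\log\Bigl(1+\lambda\,\E\bigl[\tfrac{1}{a+z}\bigr]\Bigr)+t\log\Bigl(1+\lambda\,\E\bigl[\tfrac{1}{b-z-\lambda}\bigr]\Bigr).
\]
Since $\bbL_{s,t}$ is convex (a limit of logarithmic moment generating functions) and vanishes at $0$, it suffices to prove the one-sided expansion $\bbL_{s,t}(\lambda)=g(s,t)\lambda+\tfrac12\sigma_{s,t}^2\lambda^2+o(\lambda^2)$ as $\lambda\downarrow0$, where $\sigma_{s,t}^2$ denotes the quantity placed inside the parentheses in the relevant case of the statement: then $h(\lambda):=\bbL_{s,t}(\lambda)-g(s,t)\lambda$ is convex, nonnegative, with $h(0)=0$ and $h(\lambda)=\tfrac12\sigma_{s,t}^2\lambda^2+o(\lambda^2)$, so $\bbJ_{s,t}(g(s,t)+\epsilon)=\sup_{\lambda\ge0}\{\epsilon\lambda-h(\lambda)\}=h^\star(\epsilon)=\tfrac{\epsilon^2}{2\sigma_{s,t}^2}+o(\epsilon^2)$; the last equality is elementary because the supremum is attained near $\lambda=\epsilon/\sigma_{s,t}^2\to0$, so only the local behavior of $h$ matters.

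To obtain the expansion, first record the fixed-$z$ Taylor expansion $\ell(z,\lambda)=\lambda\,g_z(s,t)+\lambda^2 R(z)+O(\lambda^3)$, with remainder locally uniform in $z$ over sets where the appearing moments are finite, where $R(z)=t\,\E[(b-z)^{-2}]-\tfrac12 s\,\E[(a+z)^{-1}]^2-\tfrac12 t\,\E[(b-z)^{-1}]^2$. Recall from (\ref{E28}) that $g_z(s,t)$ is strictly convex in $z$ with minimizer $\zeta$, which is interior when $c_1<s/t<c_2$, equals $-\ubar{\alpha}$ when $s/t\le c_1$, and equals $\ubar{\beta}$ when $s/t\ge c_2$. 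In the interior case, minimizing $\lambda g_z(s,t)+\lambda^2R(z)$ over $z$ near $\zeta$ and using $\partial_z^2 g_z(\zeta)>0$ shows the minimizer $z(\lambda)$ is displaced from $\zeta$ by only $O(\lambda)$ and that this displacement perturbs $\bbL_{s,t}$ only at order $\lambda^3$; hence $\tfrac12\sigma_{s,t}^2=R(\zeta)$, and the stationarity relation $s\,\E[(a+\zeta)^{-2}]=t\,\E[(b-\zeta)^{-2}]$ rewrites this as $\tfrac12(s\Var[(a+\zeta)^{-1}]+t\Var[(b-\zeta)^{-1}])$, which is the asserted constant and is $>0$ exactly because $\alpha,\beta$ are not both degenerate. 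The boundary lines $s/t=c_1$ and $s/t=c_2$ fall into this case: there $\zeta$ lies at an endpoint but is still a stationary point of $g_z$, so the same computation applies and the two expressions for $\sigma_{s,t}^2$ agree. When $s/t<c_1$, $\partial_z g_z(-\ubar{\alpha})>0$ pins $z(\lambda)\equiv-\ubar{\alpha}$ for small $\lambda$, giving $\sigma_{s,t}^2=2R(-\ubar{\alpha})$; this is finite because $c_1>0$ forces $\E[(a-\ubar{\alpha})^{-2}]<\infty$, it matches the stated constant after expanding the variance, and it is automatically positive from $\partial_z g_z(-\ubar{\alpha})>0$ together with Cauchy--Schwarz, $\E[(a-\ubar{\alpha})^{-1}]^2\le\E[(a-\ubar{\alpha})^{-2}]$. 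When $s/t>c_2$, the constraint $z\le\ubar{\beta}-\lambda$ is active and $z(\lambda)\equiv\ubar{\beta}-\lambda$; substituting this value collapses $\ell$ to $s\log(1+\lambda\E[(a+\ubar{\beta})^{-1}])+t\log(1+\lambda\E[(b-\ubar{\beta})^{-1}])$ (both moments finite because $c_2<\infty$), whose $\lambda^2$-coefficient is exactly the stated constant.

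The main obstacle I anticipate is the uniform control of the $z$-infimum needed to pass from the fixed-$z$ expansion of $\ell$ to the expansion of $\bbL_{s,t}=\inf_z\ell(z,\cdot)$: one must preclude $z(\lambda)$ drifting toward the endpoints $-\ubar{\alpha}$ or $\ubar{\beta}-\lambda$ where $\E[(a+z)^{-1}]$ or $\E[(b-z-\lambda)^{-1}]$ can blow up, and, in the interior case, confirm that the $O(\lambda)$ motion of $z(\lambda)$ is genuinely invisible at scale $\lambda^2$. This is handled by combining strict convexity of $g_z(s,t)$ with coarse estimates --- a lower bound $\ell(z,\lambda)\ge\lambda g_z(s,t)-C\lambda^2$ on compact sub-intervals, and $\ell(z,\lambda)\to\infty$ as $z$ approaches a singular endpoint --- which together confine the minimizer to a shrinking neighborhood of $\zeta$ (respectively of $\ubar{\beta}-\lambda$) for small $\lambda$. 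A subsidiary, bookkeeping point is to verify in each region that exactly the integrability implied by the region ($c_1>0$, so $\E[(a-\ubar{\alpha})^{-2}]<\infty$; $c_2<\infty$, so $\E[(b-\ubar{\beta})^{-2}]<\infty$; or $\zeta$ interior, so all relevant reciprocal moments bounded) makes every expectation entering $\sigma_{s,t}^2$ finite.
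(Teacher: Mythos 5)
Your overall route is sound and is genuinely different from the paper's. The paper never expands $\bbL_{s,t}(\lambda)$ itself: it differentiates, using the formula (\ref{E85}) for $\bbL_{s,t}'(\lambda)$ along the minimizer $\zs(\lambda)$ of Lemma \ref{L9}, expands $\bbL'$ to first order in $\lambda$, and then inverts through $\bbJ_{s,t}'(r)=\lam(r)$, $\bbL_{s,t}'(\lam(r))=r$ and integrates. You instead exploit the algebraic identity $\E[\tfrac{a+z+\lambda}{a+z}]=1+\lambda\E[(a+z)^{-1}]$ to get a second-order expansion of $\bbL_{s,t}(\lambda)=\inf_z\ell(z,\lambda)$ directly, and finish with an elementary convex-duality step; your interior computation, the identification $R(\zeta)=\tfrac12\bigl(s\Var[(a+\zeta)^{-1}]+t\Var[(b-\zeta)^{-1}]\bigr)$ via the stationarity relation, the $s/t<c_1$ case, and the Legendre step are all correct, and (a genuine advantage) the upper/lower bound structure at the boundary rays does not need the third-moment condition (\ref{E81.2}) that the paper's own proof invokes there. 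One caveat on the boundary ray $s/t=c_2$: your claim of a uniformly $O(\lambda^3)$ remainder fails there if $\E[(b-\ubar{\beta})^{-3}]=\infty$, since the minimizing $z$ sits near $\ubar{\beta}-\lambda$; under $\E[(b-\ubar{\beta})^{-2}]<\infty$ (forced by $c_2<\infty$) one only gets a uniform $o(\lambda^2)$ remainder, which however still suffices for your argument.

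There is, however, a concrete error in the case $s/t>c_2$. Substituting $z(\lambda)=\ubar{\beta}-\lambda$ does \emph{not} collapse $\ell$ to $s\log\bigl(1+\lambda\E[(a+\ubar{\beta})^{-1}]\bigr)+t\log\bigl(1+\lambda\E[(b-\ubar{\beta})^{-1}]\bigr)$: the first factor is $s\log\bigl(1+\lambda\E[(a+\ubar{\beta}-\lambda)^{-1}]\bigr)$, because the $a$-argument is $a+z=a+\ubar{\beta}-\lambda$. The expression you wrote has $\lambda^2$-coefficient $-\tfrac{s}{2}\E[(a+\ubar{\beta})^{-1}]^2-\tfrac{t}{2}\E[(b-\ubar{\beta})^{-1}]^2$, which is negative and is certainly not the stated constant, so the sentence ``whose $\lambda^2$-coefficient is exactly the stated constant'' is false as written. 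The dropped $\lambda$-shift matters at exactly the order you need: expanding $\E[(a+\ubar{\beta}-\lambda)^{-1}]=\E[(a+\ubar{\beta})^{-1}]+\lambda\E[(a+\ubar{\beta})^{-2}]+O(\lambda^2)$ supplies the additional term $s\E[(a+\ubar{\beta})^{-2}]\lambda^2$, after which the coefficient becomes $\tfrac12\bigl(s\Var[(a+\ubar{\beta})^{-1}]+s\E[(a+\ubar{\beta})^{-2}]-t\E[(b-\ubar{\beta})^{-1}]^2\bigr)$, matching the theorem, with positivity coming from $s\E[(a+\ubar{\beta})^{-2}]>t\E[(b-\ubar{\beta})^{-2}]\ge t\E[(b-\ubar{\beta})^{-1}]^2$. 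The repair is mechanical and stays within your framework, but as written the third case does not yield the claimed asymptotics.
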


We do not have any explicitly computable examples for which the regions $s/t \le c_1$ and $s/t \ge c_2$ are non-trivial, but we illustrate the results of the last two theorems with a numerical example. 
\begin{ex}
Choose $\alpha = 4(a-1)^31_{[1,2]}(a) da$ and $\beta = \delta_1$. We note that $\ubar{\alpha} = \ubar{\beta} = 1$. Explicit computation shows that
$\E\left[(a-1)^{-2}\right] = 2$,  $\E\left[(b-1)^{-2}\right] = \infty$, and $\E\left[(b+1)^{-2}\right] = \frac{1}{4}$. The linear region is then $\frac{s}{t} < \frac{1}{8}$. This is illustrated in Figure \ref{numericalshape} below.

\begin{figure}[H]
\begin{center}
\includegraphics[scale=.5]{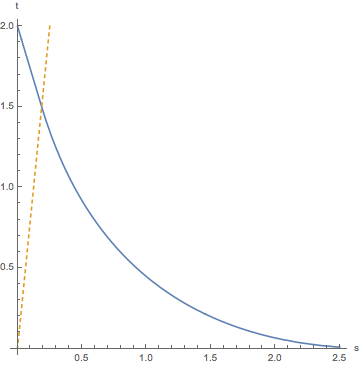}
\end{center}
\caption{\small{The level set $\{(s,t) : g(s,t) = 1\}$ (solid) and the boundary line $\frac{s}{t} = \frac{1}{8}$ (dashed).}}
\label{numericalshape}
\end{figure}

In Figure \ref{numericalasymp}, we plot numerical approximations of the rate functions against the small $\epsilon$ asymptotics in Theorems \ref{T3} and \ref{T2}. For example, frame (e) plots $\bfJ_{1,1}(g(1,1) + \epsilon)$ against $\frac{4}{3}(E (a+\zeta)^{-3} + \E(b-\zeta)^{-3})^{-\frac{1}{2}}\epsilon^{\frac{3}{2}}$, where $\zeta$ is the minimizer in (\ref{E28}).

\begin{figure}[H]
\centering
\begin{tabular}{@{}c c@{}}
\subfloat[Quenched linear, $t=10$]{
    \includegraphics[width=.5\textwidth]{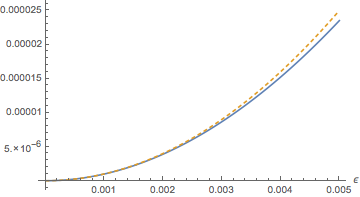}} &
\subfloat[Annealed linear, $t=10$]{
    \includegraphics[width=.5\textwidth]{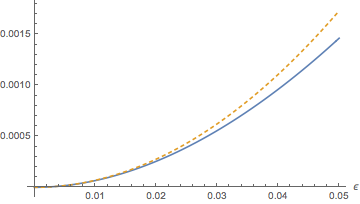}}
\end{tabular}
\end{figure}
\begin{figure}[H]
\centering
\begin{tabular}{@{}c c@{}}
\subfloat[Quenched boundary, $t=8$]{
    \includegraphics[width=.5\textwidth]{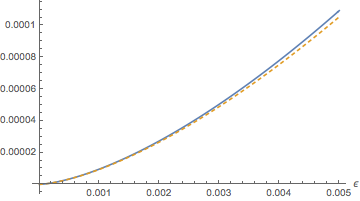} } &
\subfloat[Annealed boundary, $t =8$]{
    \includegraphics[width=.5\textwidth]{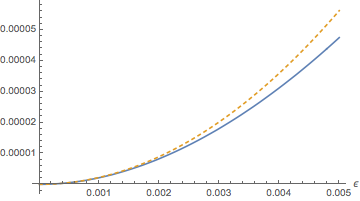}}
\end{tabular}
\end{figure}
\begin{figure}[H]
\begin{tabular}{@{}c c@{}}
\subfloat[Quenched concave $t=1$]{
    \includegraphics[width=.5\textwidth]{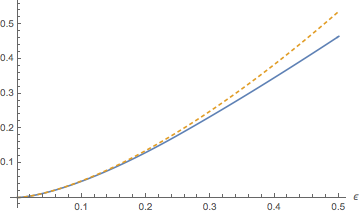}}&
\subfloat[Annealed concave $t=1$]{
    \includegraphics[width=.5\textwidth]{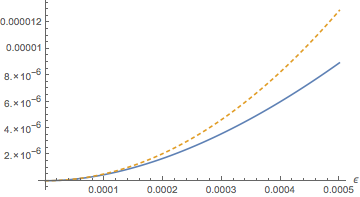}}   \\
  \end{tabular}
  \caption{\small{Plot of $\bbJ_{s,t}(g(s,t) + \epsilon)$ and $\bfJ_{s,t}(g(s,t) + \epsilon)$ (solid) and their $\epsilon \downarrow 0$ asymptotics (dashed) with $s = 1$.}}
\label{numericalasymp}
\end{figure}
\end{ex}

\section{Variational formulas for the Lyapunov exponents}
\label{S2}
Note from (\ref{E30}) that the probabilities under $\bfP_{\bfa, \bfb}^z$ and $\bbP^z$ of events generated by $\{W(i, 0): i \in \bbN\}$ make sense for any $z  > -\ubar{\alpha}$. Therefore, we permit ourselves to use notation $\bfP_{\bfa, \bfb}^z$ and $\bbP^z$ (and the corresponding expectations) for $z \ge \ubar{\beta}$ and, similarly, for $z \le -\ubar{\alpha}$ when we work only with $\{W(i, 0): i \in \bbN\}$ and $\{W(0, j): j \in \bbN\}$, respectively. 
\begin{lem}
\label{L4}
Let $\lambda \in \bbR$. Suppose that $z > -\ubar{\alpha}$ in (\ref{EQ1.3}), (\ref{EQ1}), and $z < \ubar{\beta}$ in (\ref{EQ1.4}) and (\ref{EQ1.1}) below. 
\begin{enumerate}[(a)]
\item 
$\mu$-a.s., for any $t > 0$,
\begin{align} 
\lim \limits_{n \rightarrow \infty} \frac{1}{n} \log \bfE^z_{\bfa, \bfb} \left[\exp\left(\lambda \sum_{i = 1}^{\lf nt \rf} W(i, 0)\right)\right] &= \begin{cases}t \E\left[\log\left(\dfrac{a+z}{a+z-\lambda}\right)\right] &\text{ if } \lambda \le \ubar{\alpha}+z \\ \infty &\text{ otherwise. }\end{cases} \label{EQ1.3} \\
\lim \limits_{n \rightarrow \infty} \frac{1}{n} \log \bfE^z_{\bfa, \bfb} \left[\exp\left(\lambda \sum_{i = 1}^{\lf nt \rf} W(0, i)\right)\right] &= \begin{cases}t \E\left[\log\left(\dfrac{b-z}{b-z-\lambda}\right)\right] &\text{ if } \lambda \le \ubar{\beta}-z \\ \infty &\text{ otherwise. }\end{cases}\label{EQ1.4}
\end{align}
\item For any $t > 0$, 
\begin{align}
\lim \limits_{n \rightarrow \infty} \frac{1}{n} \log \bbE^z \left[\exp\left(\lambda \sum_{i = 1}^{\lf nt \rf} W(i, 0)\right)\right]  &= \begin{cases}t \log\E\left[\dfrac{a+z}{a+z-\lambda}\right] &\text{ if } \lambda \le \ubar{\alpha}+z \\ \infty &\text{ otherwise. }\end{cases} \label{EQ1} \\
\lim \limits_{n \rightarrow \infty} \frac{1}{n} \log \bbE^z \left[\exp\left(\lambda \sum_{i = 1}^{\lf nt \rf} W(0, i)\right)\right] &= \begin{cases}t \log \E\left[\dfrac{b-z}{b-z-\lambda}\right] &\text{ if } \lambda \le \ubar{\beta}-z \\ \infty &\text{ otherwise. }\end{cases}\label{EQ1.1}
\end{align}
\end{enumerate}
\end{lem}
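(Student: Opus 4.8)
The plan is to reduce every limit to Birkhoff's ergodic theorem by exploiting the product structure of $\bfP^z_{\bfa, \bfb}$. Fix $\lambda \in \bbR$ and $z > -\ubar{\alpha}$. Under $\bfP^z_{\bfa, \bfb}$ the boundary weights $\{W(i,0)\}_{i \ge 1}$ are independent with $W(i,0)$ exponentially distributed of rate $a_i + z$, so I would start from
\begin{align*}
\bfE^z_{\bfa, \bfb}\Bigl[\exp\Bigl(\lambda \sum_{i=1}^{\lf nt \rf} W(i,0)\Bigr)\Bigr] = \prod_{i=1}^{\lf nt \rf}\bfE^z_{\bfa, \bfb}\bigl[e^{\lambda W(i,0)}\bigr],
\end{align*}
in which the $i$-th factor equals $\frac{a_i+z}{a_i+z-\lambda}$ when $a_i + z > \lambda$ and equals $+\infty$ otherwise. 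Applying $\frac{1}{n}\log$ turns the right-hand side into $\frac{\lf nt \rf}{n}$ times the empirical average of $f(a) := \log\frac{a+z}{a+z-\lambda}$ over $a_1, \dots, a_{\lf nt \rf}$. Two structural facts feed in: stationarity together with the definition of $\ubar{\alpha}$ gives that $\mu$-a.s.\ $a_i \ge \ubar{\alpha}$ for every $i$, and the $\bfa$-marginal of $\mu$ is ergodic under $\tau_1$, which follows from separate ergodicity of $\mu$ by testing against cylinder sets $A \times \bbR_+^{\bbN}$ with $A$ a $\tau_1$-invariant set.

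I would then treat three ranges of $\lambda$. If $\lambda < \ubar{\alpha}+z$, then $a_i + z - \lambda \ge \ubar{\alpha}+z-\lambda > 0$ for all $i$, so $f$ is bounded, hence $\mu$-integrable, and Birkhoff's theorem gives that the empirical average converges $\mu$-a.s.\ to $\E[f(a)]$ along one full-measure set valid simultaneously for all $t$; combined with $\lf nt \rf / n \to t$ this produces (\ref{EQ1.3}). At the boundary $\lambda = \ubar{\alpha}+z$ the same identity holds with $f \ge 0$ but possibly non-integrable, and Birkhoff's theorem for nonnegative functions still gives convergence to $\E[f(a)] \in (0,\infty]$, which matches the formula under the convention $t\cdot\infty = \infty$ (and if $\mu$ charges $\{\ubar{\alpha}\}$, both sides are literally $+\infty$ for all large $n$). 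If $\lambda > \ubar{\alpha}+z$, then $\ubar{\alpha} < \lambda - z$, so $\mu(a_1 < \lambda - z) > 0$, hence $\mu$-a.s.\ some index $i_0$ satisfies $a_{i_0}+z < \lambda$ and the whole expectation is $+\infty$ once $\lf nt \rf \ge i_0$, giving the limit $+\infty$. Equation (\ref{EQ1.4}) is proved identically after replacing $(a_i, \ubar{\alpha}, z)$ by $(b_i, \ubar{\beta}, -z)$, using the hypothesis $z < \ubar{\beta}$.

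For part (b) I would use $\bbE^z[\,\cdot\,] = \E[\bfE^z_{\bfa, \bfb}[\,\cdot\,]]$ to rewrite the annealed quantity as $\E\bigl[\prod_{i=1}^{\lf nt \rf}\frac{a_i+z}{a_i+z-\lambda}\bigr]$ with the same $+\infty$ convention, and then invoke the standing assumption that $\bfa$ is i.i.d. For $\lambda < \ubar{\alpha}+z$ the factors are a.s.\ positive, finite and $\mu$-integrable, so by independence the expectation of the product equals $\bigl(\E[\tfrac{a+z}{a+z-\lambda}]\bigr)^{\lf nt \rf}$, and $\frac{1}{n}\log$ of this tends to $t\log\E[\tfrac{a+z}{a+z-\lambda}]$, which is (\ref{EQ1}). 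The boundary $\lambda = \ubar{\alpha}+z$ is handled as in part (a), the factored expectation $\bigl(\E[\tfrac{a+z}{a-\ubar{\alpha}}]\bigr)^{\lf nt \rf}$ being finite or $+\infty$ consistently with $t\log\infty = \infty$; and if $\lambda > \ubar{\alpha}+z$ the product is $+\infty$ on a set of positive $\mu$-measure, so the annealed expectation is $+\infty$ and the limit is $+\infty$. Equation (\ref{EQ1.1}) follows by the same symmetry.

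The computation is essentially bookkeeping; the one step I expect to require care is the boundary case $\lambda = \ubar{\alpha}+z$ (and symmetrically $\lambda = \ubar{\beta}-z$), where one must simultaneously accommodate the possibility that $\mu$ puts an atom at $\ubar{\alpha}$ — making the finite-$n$ moment generating functions already $+\infty$ — and the possibility that it does not while $\E\log\frac{a+z}{a-\ubar{\alpha}} = \infty$ nonetheless, so that the finite-$n$ quantities are finite but the limit is infinite. Both are absorbed into the stated formulas once the convention $t\cdot\infty=\infty$ is in force.
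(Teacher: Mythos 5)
Your proof is correct and takes essentially the same route as the paper: compute the moment generating function exactly as a product, apply the ergodic theorem for the quenched limit and the i.i.d.\ factorization for the annealed one, and handle $\lambda > \ubar{\alpha}+z$ via the positive probability of parameters below $\lambda - z$. The only cosmetic difference is at the boundary $\lambda = \ubar{\alpha}+z$, where you invoke the nonnegative (truncation) form of Birkhoff's theorem, while the paper lower-bounds by $\lambda' < \lambda$ and lets $\lambda' \uparrow \lambda$ with monotone convergence; both arguments are valid.
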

\begin{proof}
Using (\ref{E30}), we compute 
\begin{equation}
\label{E2}\bfE_{\bfa, \bfb}^z \left[e^{\lambda \sum \limits_{i=1}^{\lf nt \rf} W(i, 0)} \right] = 
\begin{cases}\displaystyle \prod \limits_{i=1}^{\lf nt \rf} \frac{a_i+z}{a_i+z-\lambda}
&\text{ if } \lambda < \min \limits_{1 \le i \le \lf nt \rf} a_i + z \\
\infty &\text{ otherwise. }
\end{cases}
\end{equation}
If $\lambda < \ubar{\alpha} + z$ then the first equality in (\ref{E2}) holds for all $n \in \bbN$ $\mu$-a.s and we have
\begin{equation}\label{E36}\E\left|\log \left(\frac{a+z}{a+z-\lambda}\right)\right| < \infty.\end{equation}
Hence, by the ergodicity of $\bfa$, 
\begin{equation}
\label{E4}
\lim_{n \rightarrow \infty} \frac{1}{n} \log \bfE^z_{\bfa, \bfb}\left[e^{\lambda \sum \limits_{i = 1}^{\lf nt \rf} W(i, 0)}\right] = \lim_{n \rightarrow \infty} \frac{1}{n} \sum \limits_{i=1}^{\lf nt \rf} \log \left(\frac{a_i+z}{a_i+z-\lambda}\right) = t \E \log \left(\frac{a+z}{a+z-\lambda}\right) \quad \mu\text{-a.s.}
\end{equation}
Moreover,  it follows from (\ref{E2}) that  
\begin{equation}
\label{E3}
\lim_{n \rightarrow \infty}\frac{1}{n} \log \bbE^z\left[e^{\lambda \sum \limits_{i = 1}^{\lf nt \rf} W(i, 0)}\right] = \lim_{n \rightarrow \infty}\frac{\lf nt \rf}{n} \log \E\left[\frac{a+z}{a+z-\lambda}\right] \rightarrow t \log \E\left[\frac{a+z}{a+z-\lambda}\right]. 
\end{equation}
Next, consider the case $\lambda = \ubar{\alpha}+z$. If (\ref{E36}) is in force, then both (\ref{E4}) and (\ref{E3}) still hold. Suppose now that (\ref{E36}) fails. By monotonicity, 
\begin{align*}
\liminf \limits_{n \rightarrow \infty} \frac{1}{n} \log \bfE^z_{\bfa, \bfb}\left[e^{\lambda \sum \limits_{i = 1}^{\lf nt \rf} W(i, 0)}\right] &\ge t \E \log \left(\frac{a+z}{a+z-\lambda'}\right) \quad \mu\text{-a.s.}\\
\liminf \limits_{n \rightarrow \infty} \frac{1}{n} \log \bbE^z\left[e^{\lambda \sum \limits_{i = 1}^{\lf nt \rf} W(i, 0)}\right] &\ge t \log \E\left[\frac{a+z}{a+z-\lambda'}\right]
\end{align*} 
for any $\lambda' < \lambda$. Letting $\lambda' \uparrow \lambda$ and monotone convergence yield 
\begin{equation}
\label{E37}
\lim \limits_{n \rightarrow \infty} \frac{1}{n} \log \bfE^z_{\bfa, \bfb}\left[e^{\lambda \sum \limits_{i = 1}^{\lf nt \rf} W(i, 0)}\right] = \lim \limits_{n \rightarrow \infty} \frac{1}{n} \log \bbE^z\left[e^{\lambda \sum \limits_{i = 1}^{\lf nt \rf} W(i, 0)}\right] = \infty.\end{equation}
Finally, consider the case $\lambda > \ubar{\alpha} + z$. Then, by the ergodicity of $\bfa$, there exists $i \in \bbN$ such that $\lambda \ge a_i+z$ and the second equality in (\ref{E2}) holds for large enough $n \in \bbN$ $\mu$-a.s. Hence, (\ref{E37}). 

We have verified (\ref{EQ1.3}) and (\ref{EQ1}). The proofs of (\ref{EQ1.4}) and (\ref{EQ1.1}) are similar. 
\end{proof}

Recall the basic properties of the Lyapunov exponents stated in Proposition \ref{P1}. For $s, t > 0$ and $\lambda \in \bbR$, define 
\begin{align*}
\bfL_{s, 0}(\lambda) = \lim_{t \downarrow 0} \bfL_{s, t}(\lambda) \qquad \bfL_{0, t}(\lambda) = \lim_{s \downarrow 0} \bfL_{s, t}(\lambda),
\end{align*}
where the limits exist by monotonicity. 
Define $\bbL_{s, 0}(\lambda)$ and $\bbL_{0, t}(\lambda)$ similarly. Also, for $k, l \in \bbZ_+$, let $\theta_{k, l}$ denote the shift given by $\omega(i, j) \mapsto \omega(i+k, j+l)$ for $i, j \in \bbN$ and $\omega \in \bbR^{\bbN^2}$. We next obtain a variational formula involving the Lyapunov exponents. 

\begin{lem}
\label{L5}
Let $z \in (-\ubar{\alpha}, \ubar{\beta})$ and $\lambda \in (0, \ubar{\beta}-z]$. Then 
\begin{align}
&\E\log \left(\frac{a+z+\lambda}{a+z}\right) + \E\log \left(\frac{b-z}{b-z-\lambda}\right)\label{E39}\\
&= 
\sup \limits_{0 \le t \le 1} \bigg\{\max \bigg\{\bfL_{t, 1}(\lambda) +(1-t)\E\log \left(\frac{a+z+\lambda}{a+z}\right), \bfL_{1, t}(\lambda)  + (1-t)\E\log \left(\frac{b-z}{b-z-\lambda}\right)\bigg\}\bigg\}. \nonumber
\end{align}
Also,
\begin{align}
&\log \E\left[\frac{a+z+\lambda}{a+z}\right] + \log\E\left[\frac{b-z}{b-z-\lambda}\right] \label{E59}\\
&= \sup \limits_{0 \le t \le 1} \bigg\{\max \bigg\{\bbL_{t, 1}(\lambda) +(1-t)\log\E\left[\frac{a+z+\lambda}{a+z}\right], \bbL_{1, t}(\lambda)  + (1-t)\log\E\left[\frac{b-z}{b-z-\lambda}\right]\bigg\}\bigg\}. \nonumber 
\end{align}
\end{lem}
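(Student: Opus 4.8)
Since (\ref{E59}) is obtained from (\ref{E39}) by replacing each application of part (a) of Lemma~\ref{L4} by the corresponding part (b) and each $\E\log(\cdot)$ by $\log\E(\cdot)$ — the only extra point being that under $\bbP^z$ a boundary sum and the bulk passage time next to it share one extreme parameter $a_k$ or $b_l$, whose effect on the exponential rate is negligible and is removed by conditioning on it — it suffices to treat (\ref{E39}). Write $p=\E\log\frac{a+z+\lambda}{a+z}\in(0,\infty)$ (finite since $a+z\ge\ubar{\alpha}+z>0$ and $\E[a]<\infty$) and $q=\E\log\frac{b-z}{b-z-\lambda}\in(0,\infty]$, and let $\Phi$ denote the right-hand side of (\ref{E39}). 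By Proposition~\ref{P1}, since $\lambda\le\ubar{\beta}-z<\ubar{\alpha}+\ubar{\beta}$, every Lyapunov exponent occurring below is finite, and $(s,t)\mapsto\bfL_{s,t}(\lambda)$ is continuous, positively homogeneous of degree one, and — being superadditive under concatenation of paths on disjoint weights — concave. The whole argument goes through the stationary model (\ref{E30}): by the Burke property (Proposition~\ref{P2}) its last-passage time has a tractable exit-point decomposition, and the two inequalities of (\ref{E39}) follow from it together with Lemma~\ref{L4}.

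For $\Phi\le p+q$: by $1$-homogeneity this is equivalent to $\bfL_{s,t}(\lambda)\le sp+tq$ for all $s,t>0$. Fix an admissible parameter $w$; coupling with the same bulk weights and using nonnegativity of the boundary weights, $G(\lf ns\rf,\lf nt\rf)\le\widehat{G}(\lf ns\rf,\lf nt\rf)$ under $\qzmeas$ with $z$ replaced by $w$. Splitting $\widehat{G}(m,n)$ at the last site of a maximizing path on a coordinate axis,
\[
\widehat{G}(m,n)=\Big(\max_{1\le k\le m}\sum_{i=1}^{k}W(i,0)+G^{(k)}(m,n)\Big)\vee\Big(\max_{1\le l\le n}\sum_{j=1}^{l}W(0,j)+H^{(l)}(m,n)\Big),
\]
with $G^{(k)},H^{(l)}$ built from bulk weights only; each boundary sum is a function of coordinates disjoint from those of the adjacent bulk passage time, hence independent of it under the product measure, and $G^{(k)}(m,n)$ is an original last-passage time on a rectangle of the right shape with shifted parameter sequences, whose Lyapunov exponent is unchanged $\mu$-a.s.\ by stationarity of $\mu$ and Proposition~\ref{P1}. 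A union bound over the $m+n$ exit sites costs only $n^{-1}\log(m+n)\to0$, so Lemma~\ref{L4} and Proposition~\ref{P1} give, for every admissible $w$,
\[
\bfL_{s,t}(\lambda)\le\max\Big\{\sup_{0\le\sigma\le s}\bigl(\sigma\,\E\log\tfrac{a+w}{a+w-\lambda}+\bfL_{s-\sigma,t}(\lambda)\bigr),\ \sup_{0\le\tau\le t}\bigl(\tau\,\E\log\tfrac{b-w}{b-w-\lambda}+\bfL_{s,t-\tau}(\lambda)\bigr)\Big\}.
\]
Taking $w=z$ makes the left-boundary rate equal $q$, and $w=z+\lambda$ makes the bottom-boundary rate $\E\log\frac{a+(z+\lambda)}{a+(z+\lambda)-\lambda}=p$; iterating these two inequalities against one another and using the concavity and $1$-homogeneity of $\bfL$ (so that each surviving supremum is attained where the remaining bulk direction is the characteristic one for the parameter in force) yields $\bfL_{s,t}(\lambda)\le sp+tq$. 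I expect the delicate points to be making the families $w=z$ and $w=z+\lambda$ jointly cover all $(s,t)$ and handling the ranges where a comparison degenerates (e.g.\ $\lambda>\ubar{\alpha}+z$); note this is exactly the upper-bound half of Theorem~\ref{T1}.

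For $\Phi\ge p+q$: it suffices to exhibit a single $t_0\in[0,1]$ at which one term of the maximum defining $\Phi$ already equals $p+q$. Writing $\widehat{L}^{w}_{s,t}(\lambda):=\lim_n n^{-1}\log\bfE^{w}_{\bfa,\bfb}[e^{\lambda\widehat{G}(\lf ns\rf,\lf nt\rf)}]$ (which exists by superadditivity and, by the exit-point decomposition, equals the right side of the last display), a path lower bound in the stationary model — keeping a maximizing path on the bottom boundary ($w=z+\lambda$) or the left boundary ($w=z$) for a macroscopic fraction before entering the bulk, and using the same disjointness — gives $\widehat{L}^{z+\lambda}_{s,t}(\lambda)\ge\sigma p+\bfL_{s-\sigma,t}(\lambda)$ and $\widehat{L}^{z}_{s,t}(\lambda)\ge\tau q+\bfL_{s,t-\tau}(\lambda)$ for $0\le\sigma\le s$, $0\le\tau\le t$. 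Reading the exit-point identity at the optimal exit location then forces $\bfL_{t_0,1}(\lambda)=t_0 p+q$ or $\bfL_{1,t_0}(\lambda)=p+t_0 q$ at the $t_0$ for which $z$ is the characteristic parameter of the remaining bulk direction, whence $\Phi\ge p+q$. The main obstacle is to locate this optimal exit point, i.e.\ to show that the characteristic parameter varies monotonically with the direction and, as the direction sweeps $\{(t,1):0\le t\le1\}\cup\{(1,t):0\le t\le1\}$, runs through every value in $(-\ubar{\alpha},\ubar{\beta})$; this rests on the concavity/differentiability of $\bfL$ from Proposition~\ref{P1} and the strict monotonicity in $w$ of $w\mapsto\E\log\frac{a+w}{a+w-\lambda}$ and $w\mapsto\E\log\frac{b-w}{b-w-\lambda}$.
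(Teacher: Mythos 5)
There is a genuine gap in both halves of your argument. For the direction $\Phi\le p+q$ you reduce to the claim $\bfL_{s,t}(\lambda)\le sp+tq$, which is in substance the upper half of Theorem \ref{T1} in the boundary directions; the paper obtains that statement only as a consequence of this lemma (via Lemma \ref{LInvert}), so you must prove it from scratch, and your sketch does not. Your comparison $G\le\widehat G$ under a single boundary parameter $w$ gives a recursion in which the \emph{other} boundary always appears with rate $\E\log\frac{a+w}{a+w-\lambda}$ or $\E\log\frac{b-w}{b-w-\lambda}$, and in the range allowed by the lemma this rate can be $+\infty$ (e.g.\ $w=z$ with $\lambda\ge\ubar{\alpha}+z$, which is possible since $\lambda$ may be as large as $\ubar{\beta}-z$); the displayed inequality is then vacuous, and the proposed ``iteration'' of the $w=z$ and $w=z+\lambda$ inequalities, resting on an as-yet-undefined ``characteristic direction'' and concavity, is not carried out and does not obviously eliminate the accumulated bad-boundary contributions. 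The paper's proof never meets this problem: it starts from the Burke identity ($\sum_j J(n,j)$ distributed as $\sum_j W(0,j)$ under $\bfP^z_{\bfa,\bfb}$) and the decomposition (\ref{E9}), in which the bottom-boundary sum enters with a \emph{negative} sign, so only $\bfE^z_{\bfa,\bfb}[e^{-\lambda\sum_i W(i,0)}]$ is needed; identity (\ref{E43}) converts its reciprocal into the MGF at the tilted parameter $z+\lambda$, and the only boundary rates that ever occur are the finite ones $p$ and $q$.

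For the direction $\Phi\ge p+q$ the situation is similar: you want a $t_0$ with $\bfL_{t_0,1}(\lambda)=t_0p+q$ (or the symmetric identity), i.e.\ that $z$ is the optimal parameter for some boundary direction, and you defer exactly this (the monotone sweep of the characteristic parameter over $(-\ubar{\alpha},\ubar{\beta})$) as ``the main obstacle''. That localization is precisely the minimizer analysis of Section \ref{Sreg}, which in the paper is downstream of Theorem \ref{T1} and hence of this lemma; without it your lower bound is not established. The paper instead proves this direction with no knowledge of the optimizer, by coarse-graining the exit point into $L$ blocks, bounding the maximum by a sum of at most $2L$ terms as in (\ref{E10}), and letting $L\to\infty$. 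Finally, your one-sentence dismissal of the annealed case is inaccurate: in the annealed coarse-grained bound the bulk factor and the adjacent boundary factor share an order-$n/L$ stretch of the parameter sequence, not a single $a_k$ or $b_l$, and the paper restores independence by replacing the overlapping boundary factors with the deterministic bounds $\bigl(\tfrac{\ubar{\alpha}+z+\lambda}{\ubar{\alpha}+z}\bigr)^{n/L+1}$ and $\bigl(\tfrac{\ubar{\beta}-z}{\ubar{\beta}-z-\lambda}\bigr)^{n/L+1}$ as in (\ref{E10.1}); ``conditioning on one extreme parameter'' does not address this dependence.
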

\begin{proof}[Proof of (\ref{E39})]
We may assume that the left-hand side of (\ref{E39}) is finite. (This assumption fails only when $\lambda = \ubar{\beta}-z$ and $\E\log(b-\ubar{\beta}) = -\infty$ in which case (\ref{E39}) clearly holds).    

It follows from (\ref{E27}) and (\ref{E52}) that  
\begin{align*} 
\widehat{G}(n, n) = \max \limits_{1 \le k \le n} \{ \max \{G(n-k+1, n)\circ \theta_{k-1, 0} + \widehat{G}(k, 0), G(n, n-k+1)\circ \theta_{0, k-1}+ \widehat{G}(0, k)\}\}, 
\end{align*}
which leads to 
\begin{equation}
\label{E9}
\begin{aligned} 
\sum \limits_{1 \le j \le n} J(n, j) = \max \limits_{1 \le k \le n} \{ \max \{&G(n-k+1, n)\circ \theta_{k-1, 0}- \sum \limits_{k < i \le n} W(i, 0),  \\
 &G(n, n-k+1)\circ \theta_{0, k-1} - \sum \limits_{1 \le i \le n} W(i, 0) + \sum \limits_{1 \le j \le k} W(0, j)\}\}. 
\end{aligned}
\end{equation}
Also, note the identity 
\begin{equation}
\label{E43}
\frac{1}{\bfE_{\bfa, \bfb}^z\left[e^{-\lambda W(i, 0)}\right]} = \frac{a_i+z+\lambda}{a_i+z} = \bfE_{\bfa, \bfb}^{z+\lambda}\left[e^{\lambda W(i, 0)}\right] \text{ for } \lambda > 0 \text{ and } z > -\ubar{\alpha}. 
\end{equation}
Using the independence of weights under $\bfP_{\bfa, \bfb}^z$, Proposition \ref{P2}, (\ref{E9}) and (\ref{E43}), we obtain 
\begin{equation}
\label{E6}
\begin{aligned} 
&\bfE_{\bfa, \bfb}^{z+\lambda} \left[e^{\lambda \sum\limits_{1 \le i \le n}  W(i, 0)}\right] \cdot \bfE_{\bfa, \bfb}^{z} \left[e^{\lambda \sum\limits_{1 \le j \le n}  W(0, j)}\right]\\
&\ge  \max \bigg\{\bfE_{\tau_{k-1}(\bfa), \bfb} \left[e^{\lambda G(n-k+1, n)}\right] \cdot \bfE_{\bfa, \bfb}^{z+\lambda} \left[e^{\lambda \sum \limits_{1 \le i \le k} W(i, 0)}\right], \\
&\qquad\qquad \bfE_{\bfa, \tau_{k-1}(\bfb)} \left[e^{\lambda G(n, n-k+1)}\right] \cdot \bfE_{\bfa, \bfb}^z \left[e^{\lambda \sum \limits_{1 \le j \le k} W(0, j)}\right]\bigg\}. 
\end{aligned}
\end{equation}
Set $k = \lc n(1-t) \rc + 1$ for some $t \in (0, 1)$, apply logarithms to both sides and divide through by $n$ in (\ref{E6}).
It follows from Proposition \ref{P1} that 
\begin{align*}
\frac{1}{n}\log \bfE_{\tau_{k-1}(\bfa), \bfb} \left[e^{\lambda G(n-k+1, n)}\right] \rightarrow \bfL_{t, 1}(\lambda), \qquad
\frac{1}{n}\log \bfE_{\bfa, \tau_{k-1}(\bfb)} \left[e^{\lambda G(n, n-k+1)}\right] \rightarrow \bfL_{1, t}(\lambda)
\end{align*}
as $n \rightarrow \infty$ along suitable subsequences because $(\bfa, \bfb)$ is stationary and $\bfL$ is deterministic.
Hence, also using Lemma \ref{L4}, we obtain
\begin{equation}
\label{E13}
\begin{aligned}
&\E \log \left(\frac{a+z+\lambda}{a+z}\right) + \E\log \left(\frac{b-z}{b-z-\lambda}\right) \\
&\ge \max \bigg\{\bfL_{t, 1}(\lambda) +(1-t)\E\log \left(\frac{a+z+\lambda}{a+z}\right), \bfL_{1, t}(\lambda)  + (1-t)\E\log \left(\frac{b-z}{b-z-\lambda}\right)\bigg\}. 
\end{aligned}
\end{equation}
In particular, $\bfL$ is finite. By continuity, (\ref{E13}) holds with $t = 0$ and $t = 1$ as well. 

For the opposite inequality, introduce $L \in \bbN$ and let $n > L$ such that $\lc (l+1)n/L \rc > \lc ln/L\rc$ for $0 \le l < L$. Then, by (\ref{E9}) and nonnegativity of the weights,  
\begin{equation*}
\begin{aligned} 
\sum \limits_{1 \le j \le n} J(n, j) \le \max \limits_{1 \le l < L} \{ \max \{&G(\lf (L-l)n/L \rf, n) \circ \theta_{\lc ln/L \rc, 0} - \sum \limits_{\lc (l+1)n/L \rc < i \le n} W(i, 0),  \\
 &G(n, \lf (L-l)n/L \rf) \circ \theta_{0, \lc ln/L \rc} - \sum \limits_{1 \le i \le n} W(i, 0) + \sum \limits_{1 \le j \le \lc (l+1)n/L \rc} W(0, j)\}, 
\end{aligned}
\end{equation*}
which implies that  
\begin{equation}
\label{E10}
\begin{aligned} 
&\bfE_{\bfa, \bfb}^{z+\lambda} \left[e^{\lambda \sum\limits_{1 \le i \le n}  W(i, 0)}\right] \cdot \bfE_{\bfa, \bfb}^{z} \left[e^{\lambda \sum\limits_{1 \le j \le n}  W(0, j)}\right] \\ 
&\le \sum \limits_{0 \le l < L}  \bfE_{\tau_{\lc ln/L\rc}(\bfa), \bfb} \left[e^{\lambda G(\lf (L-l)n/L \rf, n)}\right] \cdot \bfE_{\bfa, \bfb}^{z+\lambda}\left[e^{\lambda \sum \limits_{i=1}^{\lc (l+1)n/L \rc} W(i, 0)}\right]  \\
&\qquad\ \ +\bfE_{\bfa, \tau_{\lc ln/L \rc}(\bfb)}\left[e^{\lambda G(n, \lf (L-l)n/L \rf)}\right] \cdot \bfE_{\bfa, \bfb}^z\left[e^{\lambda \sum \limits_{j = 1}^{\lc (l+1)n/L \rc} W(0, j)}\right]. 
\end{aligned}
\end{equation}
Taking logarithms leads to  
\begin{align*} 
&\log \bfE_{\bfa, \bfb}^{z+\lambda}\left[e^{\lambda \sum \limits_{1 \le i \le n} W(i, 0)}\right]  + \log \bfE_{\bfa, \bfb}^{z} \left[e^{\lambda \sum \limits_{1 \le j \le n}  W(0, j)}\right]\\
&\le \max \limits_{0 \le l < L}  \max \bigg\{\log \bfE_{\tau_{\lc ln/L\rc}(\bfa), \bfb} \left[e^{\lambda G(\lf (L-l)n/L \rf, n)}\right]+\log \bfE_{\bfa, \bfb}^{z+\lambda}\left[e^{\lambda \sum \limits_{i=1}^{\lc (l+1)n/L \rc} W(i, 0)}\right],   \\
 &\qquad\qquad\qquad \ \ \log \bfE_{\bfa, \tau_{\lc ln/L \rc}(\bfb)}\left[e^{\lambda G(n, \lf (L-l)n/L \rf)}\right] + \log \bfE_{\bfa, \bfb}^z\left[e^{\lambda \sum \limits_{j = 1}^{\lc (l+1)n/L \rc} W(0, j)}\right]\bigg\} + \log(2L). 
\end{align*}
Dividing through by $n$ and letting $n \rightarrow \infty$ along a suitable subsequential limit yield  
\begin{align*}
&\E\log \left(\frac{a+z+\lambda}{a+z}\right) + \E\log \left(\frac{b-z}{b-z-\lambda}\right)\\
&\le \max \limits_{0 \le l < L} 
\max \bigg\{\bfL_{1-l/L, 1}(\lambda) +\frac{l+1}{L} \E\log \left(\frac{a+z+\lambda}{a+z}\right), \bfL_{1, 1-l/L}(\lambda) +\frac{l+1}{L} \E\log \left(\frac{b-z}{b-z-\lambda}\right)\bigg\} \\
&\le \sup \limits_{0 \le t \le 1} \max \bigg\{\bfL_{t, 1}(\lambda) +(1-t)\E\log \left(\frac{a+z+\lambda}{a+z}\right), \bfL_{1, t}(\lambda) + (1-t) \E\log \left(\frac{b-z}{b-z-\lambda}\right)\bigg\}  \\
&+ \frac{1}{L}\left(\E\log \left(\frac{a+z+\lambda}{a+z}\right) + \E\log \left(\frac{b-z}{b-z-\lambda}\right)\right)
\end{align*}
Letting $L \rightarrow \infty$ completes the proof. 
\end{proof}
\begin{proof}[Proof of (\ref{E59})]
Some details will be skipped. We may assume that the left-hand side of (\ref{E59}) is finite. 

Using independence, we can rewrite (\ref{E6}) as 
\begin{equation}
\label{E41}
\begin{aligned}
\bfE_{\bfa, \bfb}^{z+\lambda} \left[e^{\lambda \sum\limits_{\substack{k < i \le n}}  W(i, 0)}\right] \cdot \bfE_{\bfa, \bfb}^{z} \left[e^{\lambda \sum\limits_{1 \le j \le n}  W(0, j)}\right]  &\ge \bfE_{\tau_{k-1}(\bfa), \bfb} \left[e^{\lambda G(n-k+1, n)}\right]\\
\bfE_{\bfa, \bfb}^{z+\lambda} \left[e^{\lambda \sum\limits_{\substack{1 \le i \le n}}  W(i, 0)}\right] \cdot \bfE_{\bfa, \bfb}^{z} \left[e^{\lambda \sum\limits_{k < j \le n}  W(0, j)}\right]  &\ge \bfE_{\bfa, \tau_{k-1}(\bfb)} \left[e^{\lambda G(n, n-k+1)}\right]\\
\end{aligned}
\end{equation}
The factors on the right-hand side are independent. Applying $\E$ yields
\begin{align} 
&\bbE^{z+\lambda} \left[e^{\lambda \sum\limits_{1 \le i \le n}  W(i, 0)}\right] \cdot \bbE^{z} \left[e^{\lambda \sum\limits_{1 \le j \le n}  W(0, j)}\right] \label{E8}\\
&\ge  \max \bigg\{\bbE \left[e^{\lambda G(n-k+1, n)}\right] \cdot \bbE^{z+\lambda} \left[e^{\lambda \sum \limits_{1 \le i \le k} W(i, 0)}\right], \bbE \left[e^{\lambda G(n, n-k+1)}\right] \cdot \bbE^z \left[e^{\lambda \sum \limits_{1 \le j \le k} W(0, j)}\right]\bigg\}, \nonumber 
\end{align}
where we rearranged terms using that $\{W(i, 0): i \in \bbN\}$ and $\{W(0, j): j \in \bbN\}$ are both i.i.d. under $\bbP^{z+\lambda}$ and $\bbP^{z}$. Then, (\ref{E8}) leads to $\ge$ half of (\ref{E59}) via Proposition \ref{P1} and Lemma \ref{L4}.  

For the $\le$ half of (\ref{E59}), suppose that $\lambda < \ubar{\beta}-z$ for the moment. Note the inequalities 
\begin{align*}
\bfE_{\bfa, \bfb}^{z+\lambda}[e^{\lambda W(i, 0)}] = \frac{a_i+z+\lambda}{a_i+z} \le \frac{\ubar{\alpha}+z+\lambda}{\ubar{\alpha}+z}, \qquad
\bfE_{\bfa, \bfb}^z[e^{\lambda W(0, j)}] = \frac{b_j-z}{b_j-z-\lambda} \le \frac{\ubar{\beta}-z}{\ubar{\beta}-z-\lambda}.
\end{align*}
It follows from these and (\ref{E10}) that  
\begin{equation}
\label{E10.1}
\begin{aligned} 
&\bfE_{\bfa, \bfb}^{z+\lambda} \left[e^{\lambda \sum\limits_{1 \le i \le n}  W(i, 0)}\right] \cdot \bfE_{\bfa, \bfb}^{z} \left[e^{\lambda \sum\limits_{1 \le j \le n}  W(0, j)}\right] \\
&\le \sum \limits_{0 \le l < L}  \left(\frac{\ubar{\alpha}+z+\lambda}{\ubar{\alpha}+z}\right)^{n/L+1}\bfE_{\tau_{\lc ln/L\rc}(\bfa), \bfb} \left[e^{\lambda G(\lf (L-l)n/L \rf, n)}\right] \cdot \bfE_{\bfa, \bfb}^{z+\lambda}\left[e^{\lambda \sum \limits_{i=1}^{\lc ln/L \rc} W(i, 0)}\right]\\
&\qquad \ \ +\left(\frac{\ubar{\beta}-z}{\ubar{\beta}-z-\lambda}\right)^{n/L+1} \bfE_{\bfa, \tau_{\lc ln/L \rc}(\bfb)}\left[e^{\lambda G(n, \lf (L-l)n/L \rf)}\right] \cdot \bfE_{\bfa, \bfb}^z\left[e^{\lambda \sum \limits_{j = 1}^{\lc ln/L \rc} W(0, j)}\right]. 
\end{aligned}
\end{equation}
The point of (\ref{E10.1}) is that the terms on the right-hand side are products of independent factors, which is not the case in (\ref{E10}).  
Applying $\log \E$, we obtain 
\begin{align*} 
&\log \bbE^{z+\lambda}\left[e^{\lambda \sum \limits_{1 \le i \le n} W(i, 0)}\right]  + \log \bbE^{z} \left[e^{\lambda \sum \limits_{1 \le j \le n}  W(0, j)}\right]\\
&\le \max \limits_{0 \le l < L}  \max \bigg\{(n/L+1) \log\left(\frac{\ubar{\alpha}+z+\lambda}{\ubar{\alpha}+z}\right) + \log \bbE \left[e^{\lambda G(\lf (L-l)n/L \rf, n)}\right]-\log \bbE^{z+\lambda}\left[e^{\lambda \sum \limits_{i=1}^{\lc ln/L \rc} W(i, 0)}\right],   \\
 &\qquad\qquad\qquad \ \ (n/L+1) \log \left(\frac{\ubar{\beta}-z}{\ubar{\beta}-z-\lambda}\right)+\log \bbE\left[e^{\lambda G(n, \lf (L-l)n/L \rf)}\right] + \log \bbE^z\left[e^{\lambda \sum \limits_{j = 1}^{\lc ln/L \rc} W(0, j)}\right]\bigg\}\\ 
&\qquad\qquad\qquad \ \ + \log(2L). 
\end{align*}
Divide through by $n$ and let $n \rightarrow \infty$. If we then send $L \rightarrow \infty$, the result is 
\begin{align*}
\log \E\left[\frac{a+z+\lambda}{a+z}\right] + \log\E\left[\frac{b-z}{b-z-\lambda}\right]
\le \sup \limits_{0 \le t \le 1} \bigg\{\max \bigg\{&\bbL_{t, 1}(\lambda) +(1-t)\log\E\left[\frac{a+z+\lambda}{a+z}\right],\\ &\bbL_{t, 1}(\lambda)  + (1-t)\log\E\left[\frac{b-z}{b-z-\lambda}\right]\bigg\}\bigg\}. 
\end{align*}
for all $\lambda < \ubar{\beta}-z$. The case $\lambda = \ubar{\beta}-z$ also follows because the right-hand side is nondecreasing in $\lambda$ and the left-hand side, due to monotone convergence, is continuous in $\lambda$ on $(0, \ubar{\beta}-z]$.  
\end{proof}
 
\begin{lem}
\label{L2}
For $\lambda > 0$, 
\begin{align}
&\bfL_{1, 0}(\lambda) = \E\log\left(\dfrac{a+\ubar{\beta}}{a+\ubar{\beta}-\lambda}\right) \quad \bfL_{0, 1}(\lambda) = \E\log\left(\dfrac{b+\ubar{\alpha}}{b+\ubar{\alpha}-\lambda}\right) \quad &&\text{ if } \lambda \le \ubar{\alpha} + \ubar{\beta} \\ &\bfL_{1, 0}(\lambda) = \bfL_{0, 1}(\lambda) = \infty \quad &&\text{ otherwise. }\label{EQ17}\\
&\bbL_{1, 0}(\lambda) = \log \E\left[\dfrac{a+\ubar{\beta}}{a+\ubar{\beta}-\lambda}\right] \quad 
\bbL_{0, 1}(\lambda) = \log \E\left[\dfrac{b+\ubar{\alpha}}{b+\ubar{\alpha}-\lambda}\right] \quad
&&\text{ if }\lambda \le \ubar{\alpha} + \ubar{\beta} \\ 
&\bbL_{1, 0}(\lambda) = \bbL_{0, 1}(\lambda) = \infty \quad &&\text{ otherwise. } \label{EQ17.2}
\end{align}
\end{lem}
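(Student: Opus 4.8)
The plan is to prove matching upper and lower bounds for each of the four exponents. By transposing the weight array — which interchanges the two coordinates of $G$ and swaps $(\bfa,\ubar{\alpha})\leftrightarrow(\bfb,\ubar{\beta})$, turning $\bfL_{1,0},\bbL_{1,0}$ into $\bfL_{0,1},\bbL_{0,1}$ — it suffices to treat $\bfL_{1,0}$ and $\bbL_{1,0}$. For the upper bounds I would simply specialize Lemma \ref{L5}. Using $\bfL_{1,0}(\lambda)=\lim_{t\downarrow 0}\bfL_{1,t}(\lambda)$ and plugging $t=0$ into identity (\ref{E39}) gives, for every $z\in(-\ubar{\alpha},\ubar{\beta}-\lambda)$,
\[
\bfL_{1,0}(\lambda)+\E\log\tfrac{b-z}{b-z-\lambda}\ \le\ \E\log\tfrac{a+z+\lambda}{a+z}+\E\log\tfrac{b-z}{b-z-\lambda};
\]
for such $z$ the integrand of the common term is bounded (since $b-z-\lambda\ge\ubar{\beta}-z-\lambda>0$), so it cancels and $\bfL_{1,0}(\lambda)\le\E\log\tfrac{a+z+\lambda}{a+z}$. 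The right side decreases in $z$, so letting $z\uparrow\ubar{\beta}-\lambda$ (dominated convergence) yields $\bfL_{1,0}(\lambda)\le\E\log\tfrac{a+\ubar{\beta}}{a+\ubar{\beta}-\lambda}$ for $0<\lambda<\ubar{\alpha}+\ubar{\beta}$. The same manipulation of (\ref{E59}) gives $\bbL_{1,0}(\lambda)\le\log\E\tfrac{a+\ubar{\beta}}{a+\ubar{\beta}-\lambda}$.

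For the matching lower bounds I would use an explicit suboptimal path. Fix $j_0\in\bbN$ and $t>0$; once $\lfloor nt\rfloor\ge j_0$ the staircase that climbs column $1$ to height $j_0$, runs along row $j_0$ to column $n$, and then climbs to height $\lfloor nt\rfloor$ shows $G(n,\lfloor nt\rfloor)\ge\sum_{i=1}^n W(i,j_0)$. Under $\qmeas$ these $n$ weights are independent, $W(i,j_0)$ being exponential with parameter $a_i+b_{j_0}$, and since $\lambda<\ubar{\alpha}+\ubar{\beta}\le\ubar{\alpha}+b_{j_0}$ we get $\qexp[e^{\lambda\sum_{i=1}^n W(i,j_0)}]=\prod_{i=1}^n\tfrac{a_i+b_{j_0}}{a_i+b_{j_0}-\lambda}$; the ergodic theorem (bounded summand) then gives $\bfL_{1,t}(\lambda)\ge\E\log\tfrac{a+b_{j_0}}{a+b_{j_0}-\lambda}$. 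As this holds for all $j_0$ and all $t>0$ and $\bfL_{1,t}(\lambda)$ decreases to $\bfL_{1,0}(\lambda)$ as $t\downarrow0$, we obtain $\bfL_{1,0}(\lambda)\ge\sup_{j_0}\E\log\tfrac{a+b_{j_0}}{a+b_{j_0}-\lambda}$; ergodicity of the $\bfb$-sequence forces $\inf_{j_0}b_{j_0}=\ubar{\beta}$ $\mu$-a.s., and since $b\mapsto\E\log\tfrac{a+b}{a+b-\lambda}$ is continuous and decreasing on $[\ubar{\beta},\infty)$ this supremum equals $\E\log\tfrac{a+\ubar{\beta}}{a+\ubar{\beta}-\lambda}$. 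For $\bbL_{1,0}$ I would use the same bound together with independence of $\bfa$ and $\bfb$: conditioning on $\bfb$ gives $\aexp[e^{\lambda\sum_{i=1}^n W(i,j_0)}]=\E^{\beta}[(\E^{\alpha}\tfrac{a+b_{j_0}}{a+b_{j_0}-\lambda})^n]$; restricting the outer expectation to $\{b_{j_0}<\ubar{\beta}+\epsilon\}$ (positive probability, by definition of $\ubar{\beta}$) and using monotonicity in $b_{j_0}$ produces $\tfrac1n\log\aexp[e^{\lambda G(n,\lfloor nt\rfloor)}]\ge\log\E^{\alpha}\tfrac{a+\ubar{\beta}+\epsilon}{a+\ubar{\beta}+\epsilon-\lambda}+o(1)$; letting $n\to\infty$ and then $\epsilon\downarrow0$ (monotone convergence) gives $\bbL_{1,0}(\lambda)\ge\log\E\tfrac{a+\ubar{\beta}}{a+\ubar{\beta}-\lambda}$.

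Two extreme cases remain. If $\lambda>\ubar{\alpha}+\ubar{\beta}$, route a path through the single site $(i^{\ast},j^{\ast})$ with $a_{i^{\ast}}=\min_{i\le n}a_i$ and $b_{j^{\ast}}=\min_{j\le\lfloor nt\rfloor}b_j$ to get $G(n,\lfloor nt\rfloor)\ge W(i^{\ast},j^{\ast})$; since $a_{i^{\ast}}+b_{j^{\ast}}\to\ubar{\alpha}+\ubar{\beta}<\lambda$, the exponential moment is infinite for all large $n$, so $\bfL_{1,t}(\lambda)=\bbL_{1,t}(\lambda)=\infty$ for every $t>0$ and hence the $t\downarrow0$ limits are $\infty$. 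The endpoint $\lambda=\ubar{\alpha}+\ubar{\beta}$ is the one genuinely delicate point, because Lemma \ref{L5} does not reach it (there $\lambda=\ubar{\beta}-z$ forces $z=-\ubar{\alpha}$, outside its hypotheses): the path lower bound above, with the $\epsilon\downarrow0$ monotone limit carried one step further, still gives ``$\ge$'' with right side $\E\log\tfrac{a+\ubar{\beta}}{a-\ubar{\alpha}}\in(0,\infty]$, and the matching ``$\le$'' I would extract from the regularity of the Lyapunov exponents in Proposition \ref{P1} together with the values already established on $(0,\ubar{\alpha}+\ubar{\beta})$; when $\E\log(a-\ubar{\alpha})=-\infty$ both sides are simply $+\infty$. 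Thus the two places that require real care are (i) exhibiting a path good enough to capture the sharp constant $\ubar{\beta}$ — which is precisely why one routes through columns on which $b_j$ approaches its essential infimum and invokes ergodicity — and (ii) the endpoint $\lambda=\ubar{\alpha}+\ubar{\beta}$, which sits outside the range of the variational formula and must be obtained by a separate limiting argument; everything else is routine.
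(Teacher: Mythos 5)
Most of your plan is in substance the paper's own argument: the lower bounds come from a path running along a row (resp.\ column) whose $b$- (resp.\ $a$-) parameter is close to its essential infimum, and the upper bound on $(0,\ubar{\alpha}+\ubar{\beta})$ comes from specializing the variational identity of Lemma \ref{L5} at $t=0$ and sending $z\uparrow\ubar{\beta}-\lambda$; your handling of $\lambda>\ubar{\alpha}+\ubar{\beta}$ is also fine. (One small point in your lower bound: applying the ergodic theorem to $\frac1n\sum_{i\le n}\log\frac{a_i+b_{j_0}}{a_i+b_{j_0}-\lambda}$ with the \emph{random} value $b_{j_0}$ inside needs a word under the paper's mere separate-ergodicity assumption — e.g.\ sandwich $b_{j_0}$ between rationals and use monotonicity and continuity in that variable; the paper sidesteps this by working on the positive-probability event $\{b_1\le\ubar{\beta}+\epsilon\}$ and replacing $b_1$ by the deterministic $\ubar{\beta}+\epsilon$.)

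The genuine gap is the upper bound at the endpoint $\lambda=\ubar{\alpha}+\ubar{\beta}$ in the case $\E\log\frac{a+\ubar{\beta}}{a-\ubar{\alpha}}<\infty$ (and its annealed analogue). You propose to extract it from ``the regularity of the Lyapunov exponents in Proposition \ref{P1} together with the values already established on $(0,\ubar{\alpha}+\ubar{\beta})$,'' but Proposition \ref{P1} only gives that $\lambda\mapsto\bfL_{1,0}(\lambda)$ (a monotone limit of the convex functions $\bfL_{1,t}$) is nondecreasing and convex, and its continuity is only obtained in the interior of the finiteness region. A nondecreasing convex function may jump upward at the right endpoint of its finiteness interval, so knowing $\bfL_{1,0}(\lambda)\le\E\log\frac{a+\ubar{\beta}}{a+\ubar{\beta}-\lambda}$ for all $\lambda<\ubar{\alpha}+\ubar{\beta}$ controls only the left limit, not $\bfL_{1,0}(\ubar{\alpha}+\ubar{\beta})$ itself; and the endpoint value is exactly what Corollary \ref{C1} and the identification of the linear part of the rate functions are built on, so it cannot be dismissed as routine. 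The paper closes this point with a concrete argument: finiteness of $\E\log\frac{a+\ubar{\beta}}{a-\ubar{\alpha}}$ forces $a_i>\ubar{\alpha}$ for all $i$ a.s., so the boundary stationary measure $\bfP_{\bfa,\bfb}^{z}$ with $z=-\ubar{\alpha}$ is available (see the remark after (\ref{E30})), and the Burke-type inequality (\ref{E6})--(\ref{E13}) can be run with $z=-\ubar{\alpha}$ and $\lambda=\ubar{\alpha}+\ubar{\beta}$, yielding (\ref{E47}) and then (\ref{E44}) with $t=0$, $z=\ubar{\beta}-\lambda$, i.e.\ the endpoint upper bound; the analogous boundary argument is needed for $\bbL_{1,0}$. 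You need this (or some equally concrete direct estimate at $\lambda=\ubar{\alpha}+\ubar{\beta}$) to finish; the appeal to Proposition \ref{P1} alone does not suffice.
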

\begin{proof}
Let $\epsilon > 0$. On the event $b_1 \le \ubar{\beta}+\epsilon$, which has positive $\mu$-probability, we have for $n \ge 1/\epsilon$ 
\[
\begin{aligned}
\frac{1}{n}\log \bfE_{\bfa, \bfb}[e^{\lambda G(n, \lf n\epsilon \rf)}] &\ge \frac{1}{n} \log \bfE_{\bfa, \bfb}\left[e^{\lambda \sum \limits_{1 \le i \le n} W(i, 1)}\right] \\
&= \begin{cases}\dfrac{1}{n} \sum \limits_{i=1}^n \dfrac{a_i+b_1}{a_i+b_1-\lambda} \quad &\text{ if } \lambda < \min \limits_{1 \le i \le n} a_i+b_1 \\ \infty \quad &\text{ otherwise}\end{cases} \\
&\ge \begin{cases}\dfrac{1}{n} \sum \limits_{i=1}^n \dfrac{a_i+\ubar{\beta}+\epsilon}{a_i+\ubar{\beta}+\epsilon-\lambda} \quad &\text{ if } \lambda < \min \limits_{1 \le i \le n} a_i+\ubar{\beta} +\epsilon \\ \infty \quad &\text{ otherwise}\end{cases} \\
&= \frac{1}{n} \log \bfE_{\bfa, \bfb}^{\ubar{\beta}+\epsilon}\left[e^{\lambda \sum \limits_{1 \le i \le n} W(i, 0)}\right]. 
\end{aligned}
\]
Then, by Lemma \ref{L4}, 
\[\bfL_{1, \epsilon}(\lambda) \ge \begin{cases}\E\left[\log\left(\dfrac{a+\ubar{\beta}+\epsilon}{a+\ubar{\beta}+\epsilon-\lambda}\right)\right] \quad &\text{ if } \lambda \le \ubar{\alpha}+\ubar{\beta}+\epsilon \\ \infty \quad &\text{ otherwise. }\end{cases}.\]
By monotone convergence, letting $\epsilon \downarrow 0$ yields 
\[\bfL_{1, 0}(\lambda) \ge \begin{cases}\E\left[\log\left(\dfrac{a+\ubar{\beta}}{a+\ubar{\beta}-\lambda}\right)\right] \quad &\text{ if } \lambda \le \ubar{\alpha}+\ubar{\beta}\\ \infty \quad &\text{ otherwise. }\end{cases}.\]
To complete the proof of (\ref{EQ17}), we need 
\begin{equation}
\label{E45}
\bfL_{1, 0}(\lambda) \le \E\left[\log\left(\dfrac{a+\ubar{\beta}}{a+\ubar{\beta}-\lambda}\right)\right]
\end{equation}
for $\lambda \in (0, \ubar{\alpha}+\ubar{\beta}]$. When $\lambda = \ubar{\alpha} + \ubar{\beta}$, we may assume that the right-hand side is finite. Then, $a_i > \ubar{\alpha}$ for $i \in \bbN$ a.s. and the argument in the paragraph of inequality (\ref{E6}) goes through with $z = -\ubar{\alpha}$ as well. Hence,  
\begin{equation}\label{E47}
\E\left[\log \left(\frac{a+z+\lambda}{a+z}\right)\right] + \E\left[\log \left(\frac{b-z}{b-z-\lambda}\right)\right] \ge \bfL_{1, t}(\lambda)+(1-t)\E\left[\log \left(\frac{b-z}{b-z-\lambda}\right)\right]\end{equation}
for $t \in [0, 1]$, $z \in [-\ubar{\alpha}, \ubar{\beta})$ and $\lambda \in (0, \ubar{\beta}-z]$, which simplifies to  
\begin{equation}\label{E44}\E \left[\log \left(\frac{a+z+\lambda}{a+z}\right)\right] + t \E\left[\log \left(\frac{b-z}{b-z-\lambda}\right)\right] \ge \bfL_{1, 0}(\lambda).\end{equation}
Setting $t = 0$ and $z = \ubar{\beta}-\lambda$ in (\ref{E44}) gives (\ref{E45}). The remaining cases are treated similarly. 
\end{proof}

\begin{cor}
\label{C1}
For $s, t > 0$, 
\begin{align*}\bfL_{s, t}(\ubar{\alpha}+\ubar{\beta}) &=  s\E\log\left(\dfrac{a+\ubar{\beta}}{a-\ubar{\alpha}}\right)+ t \E\log\left(\dfrac{b+\ubar{\alpha}}{b-\ubar{\beta}}\right).\\
\bbL_{s, t}(\ubar{\alpha}+\ubar{\beta}) &=  s \log\E\left[\dfrac{a+\ubar{\beta}}{a-\ubar{\alpha}}\right] + t \log \E\left[\dfrac{b+\ubar{\alpha}}{b-\ubar{\beta}}\right].\end{align*}
\end{cor}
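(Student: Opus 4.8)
The plan is to prove, for every $s,t>0$, the two scaling identities
\[
\bfL_{s,t}(\ubar{\alpha}+\ubar{\beta}) = s\,\bfL_{1,0}(\ubar{\alpha}+\ubar{\beta}) + t\,\bfL_{0,1}(\ubar{\alpha}+\ubar{\beta}),\qquad
\bbL_{s,t}(\ubar{\alpha}+\ubar{\beta}) = s\,\bbL_{1,0}(\ubar{\alpha}+\ubar{\beta}) + t\,\bbL_{0,1}(\ubar{\alpha}+\ubar{\beta}),
\]
and then substitute the values of $\bfL_{1,0},\bfL_{0,1},\bbL_{1,0},\bbL_{0,1}$ at $\lambda=\ubar{\alpha}+\ubar{\beta}$ given by Lemma \ref{L2} (observing $a+\ubar{\beta}-(\ubar{\alpha}+\ubar{\beta})=a-\ubar{\alpha}$ and $b+\ubar{\alpha}-(\ubar{\alpha}+\ubar{\beta})=b-\ubar{\beta}$). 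I will describe the quenched case; the annealed case is word-for-word identical, replacing every inner $\E\log(\cdot)$ by $\log\E(\cdot)$ and using the annealed analogue of the inequalities below, namely (\ref{E8}) and its limiting consequence from the proof of Lemma \ref{L5}.

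\emph{Lower bound.} By the superadditivity and positive homogeneity in $(s,t)$ of the Lyapunov exponent recorded in Proposition \ref{P1}, $\bfL_{s,t}(\lambda)\ge\bfL_{s,0}(\lambda)+\bfL_{0,t}(\lambda)=s\,\bfL_{1,0}(\lambda)+t\,\bfL_{0,1}(\lambda)$ for every $\lambda>0$, in particular at $\lambda=\ubar{\alpha}+\ubar{\beta}$. Since $\log\frac{a+\ubar{\beta}}{a-\ubar{\alpha}}\ge0$ and $\log\frac{b+\ubar{\alpha}}{b-\ubar{\beta}}\ge0$ (using $\ubar{\alpha}+\ubar{\beta}>0$), if either $\E\log\frac{a+\ubar{\beta}}{a-\ubar{\alpha}}$ or $\E\log\frac{b+\ubar{\alpha}}{b-\ubar{\beta}}$ is infinite, then Lemma \ref{L2} makes the corresponding axis exponent, hence (by the bound just proved) $\bfL_{s,t}(\ubar{\alpha}+\ubar{\beta})$, equal to $+\infty$, so the asserted identity holds with both sides $+\infty$. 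From now on assume both expectations are finite.

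\emph{Upper bound.} Finiteness of $\E\log\frac{a+\ubar{\beta}}{a-\ubar{\alpha}}$ forces $\mu(a=\ubar{\alpha})=0$, so $\mu$-a.s.\ $a_i>\ubar{\alpha}$ for all $i$, and likewise $b_j>\ubar{\beta}$ for all $j$; moreover $\E\bigl|\log\frac{a+\ubar{\beta}}{a-\ubar{\alpha}}\bigr|<\infty$ and $\E\bigl|\log\frac{b+\ubar{\alpha}}{b-\ubar{\beta}}\bigr|<\infty$. Consequently $\bfP^{-\ubar{\alpha}}_{\bfa,\bfb}$ and $\bfP^{\ubar{\beta}}_{\bfa,\bfb}$ are bona fide product measures, the corner-growth/Burke identity underlying inequality (\ref{E6}) and its limiting consequence (\ref{E13}) remain valid at $z=-\ubar{\alpha}$ with $\lambda=\ubar{\alpha}+\ubar{\beta}=\ubar{\beta}-z$ — the relevant boundary moment generating functions are finite and the ergodic averages converge by the displayed integrability — exactly the extension already carried out in the paragraph containing (\ref{E45}) in the proof of Lemma \ref{L2}. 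Specializing the resulting inequality (\ref{E47}) to $z=-\ubar{\alpha}$, $\lambda=\ubar{\alpha}+\ubar{\beta}$ and cancelling the finite term $(1-t)\E\log\frac{b+\ubar{\alpha}}{b-\ubar{\beta}}$ gives $\bfL_{1,t}(\ubar{\alpha}+\ubar{\beta})\le\E\log\frac{a+\ubar{\beta}}{a-\ubar{\alpha}}+t\,\E\log\frac{b+\ubar{\alpha}}{b-\ubar{\beta}}$ for $t\in[0,1]$, which by Lemma \ref{L2} equals $\bfL_{1,0}(\ubar{\alpha}+\ubar{\beta})+t\,\bfL_{0,1}(\ubar{\alpha}+\ubar{\beta})$. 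Exchanging the roles of the two coordinates (rows versus columns, $\ubar{\alpha}\leftrightarrow\ubar{\beta}$) yields $\bfL_{s,1}(\ubar{\alpha}+\ubar{\beta})\le s\,\bfL_{1,0}(\ubar{\alpha}+\ubar{\beta})+\bfL_{0,1}(\ubar{\alpha}+\ubar{\beta})$ for $s\in[0,1]$. Finally homogeneity ($\bfL_{s,t}=t\,\bfL_{s/t,1}$ when $s\le t$, and $\bfL_{s,t}=s\,\bfL_{1,t/s}$ when $t\le s$) upgrades these two bounds to $\bfL_{s,t}(\ubar{\alpha}+\ubar{\beta})\le s\,\bfL_{1,0}(\ubar{\alpha}+\ubar{\beta})+t\,\bfL_{0,1}(\ubar{\alpha}+\ubar{\beta})$ for all $s,t>0$. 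Together with the lower bound and Lemma \ref{L2}, this proves Corollary \ref{C1}.

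\emph{Main obstacle.} The delicate step is legitimizing the stationarity inequality (\ref{E47}) (respectively (\ref{E8}) in the annealed case) at the boundary value $z=-\ubar{\alpha}$, where $\bfP^{-\ubar{\alpha}}_{\bfa,\bfb}$ sits on the edge of the admissible range for $z$: one must first note that finiteness of the two entropic integrals removes any atom of $\alpha$ at $\ubar{\alpha}$ and of $\beta$ at $\ubar{\beta}$, which is precisely what makes the boundary weights genuine exponentials with a finite exponential moment of order $\ubar{\alpha}+\ubar{\beta}$, so that the Burke decomposition and the law-of-large-numbers limits used in Lemma \ref{L5} continue to apply. Everything else is bookkeeping around Lemma \ref{L2} and homogeneity.
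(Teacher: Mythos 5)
Your proposal is correct and follows essentially the same route as the paper: the lower bound via superadditivity/homogeneity of the Lyapunov exponent together with Lemma \ref{L2}, and the upper bound by specializing the boundary-extended stationarity inequality (\ref{E47}) (resp.\ its annealed analogue) at $z=-\ubar{\alpha}$, $\lambda=\ubar{\alpha}+\ubar{\beta}$, then invoking homogeneity. Your additional bookkeeping (the no-atom observation when the expectations are finite, the symmetric inequality to cover $s<t$, and the infinite case) simply fills in details the paper leaves terse.
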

\begin{proof}
By concavity and homogeneity, 
\begin{align}
\bfL_{s, t}(\ubar{\alpha} + \ubar{\beta}) \ge s\bfL_{1, 0}(\ubar{\alpha} + \ubar{\beta}) + t\bfL_{0, 1}(\ubar{\alpha} + \ubar{\beta}) = s\E\left[\log\left(\dfrac{a+\ubar{\beta}}{a-\ubar{\alpha}}\right)\right] + t \E\left[\log\left(\dfrac{b+\ubar{\alpha}}{b-\ubar{\beta}}\right)\right].\label{E46}
\end{align}
When the right-hand side is finite, the opposite inequality comes from (\ref{E47}). $\bbL_{s, t}(\ubar{\alpha}+\ubar{\beta})$ is computed similarly. 
\end{proof}

We will use the next lemma to recover the Lyapunov exponents from the variational formulas in Lemma \ref{L5}. 
\begin{lem}
\label{LInvert}
Let $a_0 < b_0$, $A: (a_0, b_0) \rightarrow \bbR$ be continuous and decreasing, $B: (a_0, b_0) \rightarrow \bbR$ be continuous and increasing. Let $L: [0, \infty)^2 \rightarrow \bbR$ be nondecreasing (in each variable), homogeneous, concave and continuous. Assume that 
\begin{align}
\label{varformula}
A(x) + B(x) = \sup \limits_{0 \le t \le 1} \{\max \{L(t, 1) + (1-t)A(x), L(1, t) + (1-t)B(x)\}\}  
\end{align}
for $a_0 < x < b_0$, $\lim_{x \uparrow b_0} A(x) = \lim_{t \downarrow 0} L(1, t)$ and $\lim_{x \downarrow a_0}B(x) = \lim_{s \downarrow 0} L(s, 1)$. Then 
\begin{align*}
L(s, t) = \inf_{a_0 < x < b_0} \{s A(x) + t B(x)\} \quad \text{ for } s, t > 0. 
\end{align*}
\end{lem}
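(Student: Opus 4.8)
The plan is to prove the two inequalities $L(s,t)\le\inf_{a_0<x<b_0}\{sA(x)+tB(x)\}$ and $L(s,t)\ge\inf_{a_0<x<b_0}\{sA(x)+tB(x)\}$ separately; the first is elementary and the second carries all the content.

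\emph{The upper bound.} Fix $x\in(a_0,b_0)$. Since the right side of (\ref{varformula}) dominates each term of the supremum, $pA(x)+B(x)\ge L(p,1)$ and $A(x)+pB(x)\ge L(1,p)$ for all $p\in[0,1]$. Given $s,t>0$ with $s\le t$, put $p=s/t$ and use homogeneity: $L(s,t)=tL(s/t,1)\le t\bigl((s/t)A(x)+B(x)\bigr)=sA(x)+tB(x)$; the case $s\ge t$ is symmetric. Hence $L(s,t)\le\inf_x\{sA(x)+tB(x)\}$. (Letting $x\to a_0$ and $x\to b_0$ and invoking the two limit hypotheses also shows $A,B\ge0$ and identifies $\lim_{x\uparrow b_0}A(x)=L(1,0)=:\alpha_-$, and we write $\lim_{x\downarrow a_0}A(x)=:\alpha_+\in[\alpha_-,\infty]$.)

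\emph{The lower bound: setup.} Write $\phi(s,t)=\inf_{a_0<x<b_0}\{sA(x)+tB(x)\}$, which is finite, concave, positively homogeneous, nondecreasing and continuous on $(0,\infty)^2$, with $\phi\ge L$ by the previous step. By homogeneity, and by the manifest symmetry of the hypotheses under reversing the interval and swapping $(A,B)$ and $L(p,1)\leftrightarrow L(1,p)$, it suffices to show $\phi(u,1)\le L(u,1)$ for $u\in(0,1)$ (the endpoints $u\in\{0,1\}$ then follow by continuity and the limit hypotheses). Introduce the Legendre transforms $F_1(c)=\sup_{p\in[0,1]}(L(p,1)-pc)$ and $F_2(d)=\sup_{q\in[0,1]}(L(1,q)-qd)$, which are finite, convex and nonincreasing, and observe that (\ref{varformula}) is exactly the statement that for every $x$, $A(x)+B(x)=\max\{A(x)+F_1(A(x)),\,B(x)+F_2(B(x))\}$; consequently for each $x$ one has $B(x)=F_1(A(x))$ or $A(x)=F_2(B(x))$, while always $B(x)\ge F_1(A(x))$ and $A(x)\ge F_2(B(x))$.

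\emph{The lower bound: the core argument.} Fix $u\in(0,1)$. Since $(u,1)$ is interior to $[0,\infty)^2$ and $L$ is finite concave there, $L$ has a supergradient $(\xi,\eta)$ at $(u,1)$: $L(s,t)\le s\xi+t\eta$ for all $s,t\ge0$ with equality $u\xi+\eta=L(u,1)$. Testing against $(p,1)$ shows $\xi$ is a supergradient of $p\mapsto L(p,1)$ at $u$, and a short computation from concavity and homogeneity (using $L(1,1)\ge L(1,0)+L(0,1)$ and $\alpha_-=L(1,0)$) gives $\xi\ge\alpha_-$. Suppose first that the supergradient can be chosen with $\xi\in(\alpha_-,\alpha_+)$, and set $x=A^{-1}(\xi)\in(a_0,b_0)$, so $A(x)=\xi$ and $\eta=L(u,1)-u\xi=F_1(\xi)$. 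If $B(x)=F_1(A(x))$ then $B(x)=\eta$ and $uA(x)+B(x)=u\xi+\eta=L(u,1)$. If instead $A(x)=F_2(B(x))$, the supremum defining $F_2(B(x))$ is attained at some $q_0\in[0,1]$, and $q_0>0$ because $\xi=F_2(B(x))>\alpha_-=L(1,0)$ rules out $q_0=0$; then $L(1,q_0)=\xi+q_0B(x)$, and the supergradient bound $L(1,q_0)\le\xi+q_0\eta$ forces $B(x)\le\eta$, while $B(x)\ge F_1(A(x))=\eta$ always, so again $B(x)=\eta$. Either way $\phi(u,1)\le uA(x)+B(x)=L(u,1)$, and combined with $\phi\ge L$ this gives $\phi(u,1)=L(u,1)$.

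\emph{Expected obstacle and loose ends.} The main difficulty is the degenerate case in which every admissible $\xi$ lies at or beyond the endpoints of the range of $A$ (i.e. $\xi\le\alpha_-$ or $\xi\ge\alpha_+$, the latter possibly $+\infty$), so no interior preimage $A^{-1}(\xi)$ is available. I expect these to be dispatched by a limiting argument: one shows that precisely in these situations the relevant supporting hyperplane of $L$ touches along a linear segment of the shape function, so that $\lim_{x\uparrow b_0}\{uA(x)+B(x)\}$ or $\lim_{x\downarrow a_0}\{uA(x)+B(x)\}$ already equals $L(u,1)$; this is the step that genuinely uses the exact form of the two limit hypotheses (which tie the ends of the curve $x\mapsto(A(x),B(x))$ to the two extreme directions of $L$). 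The remaining bookkeeping — passing from $\phi(u,1)=L(u,1)$ and its $A\leftrightarrow B$ mirror to $\phi=L$ on all of $(0,\infty)^2$ by homogeneity, and extending to the coordinate axes by continuity — is routine.
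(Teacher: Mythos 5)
Since the paper itself does not prove this lemma (it simply invokes the shape-function argument of \cite[Section 5]{Emrah}), your self-contained convex-duality argument is a genuinely different route, and most of it is correct. The upper bound is fine, and the core case is complete: rewriting (\ref{varformula}) as $A(x)+B(x)=\max\{A(x)+F_1(A(x)),\,B(x)+F_2(B(x))\}$, producing a supergradient $(\xi,\eta)$ with $L(s,t)\le s\xi+t\eta$ for all $s,t\ge 0$ and $u\xi+\eta=L(u,1)$, and the two-alternative analysis at $x=A^{-1}(\xi)$ all check out whenever $\xi$ lies in the open range $(\alpha_-,\alpha_+)$ of $A$ (note $\xi\ge L(1,0)=\alpha_-$ is immediate from testing the supergradient inequality at $(1,0)$).

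The loose end you flag is, however, a genuine gap as written, and one of its two halves needs more than a limiting argument. Since $\xi\ge\alpha_-$ always, the degenerate possibilities are $\xi=\alpha_-$ and $\xi\ge\alpha_+$ (the latter only when $\alpha_+<\infty$). The case $\xi\ge\alpha_+$ is indeed closed by the hypotheses: concavity of $h(w)=L(w,1)$ gives $h(u)\ge h(0)+u\alpha_+$, while your upper bound gives $L(u,1)\le uA(x)+B(x)\to u\alpha_+ + L(0,1)$ as $x\downarrow a_0$, so $\lim_{x\downarrow a_0}\{uA(x)+B(x)\}=L(u,1)$. But when $\xi=\alpha_-$, the limit you propose is $u\alpha_-+\lim_{x\uparrow b_0}B(x)$, and the hypotheses say nothing about $B$ near $b_0$; the two limit assumptions alone cannot finish this case. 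You must return to your dichotomy: here $h$ is linear with slope $\alpha_-$ on $[u,\infty)$, so $L(1,q)=\alpha_-+q\eta^*$ with $\eta^*=L(u,1)-u\alpha_-$, whence $F_2(d)=\alpha_-+(\eta^*-d)^+$. Since $A(x)>\alpha_-$, the alternative $A(x)=F_2(B(x))$ would force $A(x)+B(x)=\alpha_-+\eta^*$ and hence $uA(x)+B(x)=\alpha_-+\eta^*-(1-u)A(x)<u\alpha_-+\eta^*=L(u,1)$, contradicting your already-proved bound $L(u,1)\le uA(x)+B(x)$. Therefore $B(x)=F_1(A(x))$ for every $x$, and since $F_1$ is finite convex (hence continuous) and $A(x)\to\alpha_-$ by hypothesis, $B(x)\to F_1(\alpha_-)=\eta^*$ as $x\uparrow b_0$, giving $uA(x)+B(x)\to L(u,1)$ and $\phi(u,1)\le L(u,1)$. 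With this step inserted (and the routine symmetry and continuity bookkeeping you describe), your proof is complete.
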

\begin{proof}
The argument is the same as in \cite[Section 5]{Emrah} to prove Theorem 2.1. Assumption (\ref{varformula}) corresponds to Proposition 4.4 there, and $A(x) = \E[(a+x)^{-1}]$ and $B(x) = \E[(b-x)^{-1}]$ for $x \in (-\ubar{\alpha}, \ubar{\beta})$.
\end{proof}

\begin{proof}[Proof of Theorem \ref{T1}]
It follows from Lemma \ref{L2} that $\bfL_{s, t}(\lambda) = \infty$ for $\lambda > \ubar{\alpha}+\ubar{\beta}$. Fix $\lambda \in (0, \ubar{\alpha}+\ubar{\beta})$ and define 
\begin{align*}
A(z) = \E\left[\log \left(\frac{a+z+\lambda}{a+z}\right)\right] \text{ for } z > -\ubar{\alpha}, \qquad B(z) = \E\left[\log \left(\frac{b-z}{b-z-\lambda}\right)\right] \text{ for } z < \ubar{\beta}-\lambda.
\end{align*}
Lemma \ref{L5} states that
\[A(z) + B(z) = \sup \limits_{0 \le t \le 1} \{\max \{\bfL_{t, 1}(\lambda) + (1-t)A(z), \bfL_{1, t}(\lambda) + (1-t)B(z)\}\} \quad \text{ for } z \in (-\ubar{\alpha}, \ubar{\beta}-\lambda).\]
Note that $A$ and $B$ are continuous, $A$ is decreasing and $B$ is increasing. Moreover, by Lemma \ref{L2}, $A(\ubar{\beta}-\lambda) = \bfL_{1, 0}(\lambda)$ and $B(-\ubar{\alpha}) = \bfL_{0, 1}(\lambda)$. Also, $\bfL_{s, t}(\lambda)$ is finite and, by Proposition \ref{P1}, is nondecreasing, homogeneous, concave and continuous. Thus, by Lemma \ref{LInvert}, $\bfL_{s, t}(\lambda) = \inf_{-\ubar{\alpha} < z < \ubar{\beta}-\lambda} \{s A(z) + t B(z)\}$.
The endpoints can be included in the infimum, by monotone convergence. The proof of (\ref{E147}) is similar.
\end{proof}
We close this section with a proof of Theorem \ref{thm:statLyapunov}, which is similar to the arguments above.
\begin{proof}[Proof of Theorem \ref{thm:statLyapunov}]
We begin with the coupling
\begin{align*}
\hat{G}(\lf ns \rf, \lf nt \rf) &= \max_{1\leq k \leq \lf ns \rf} \left\{G(\lf ns \rf - k +1, \lf nt \rf)\circ \theta_{k-1,0} + \hat{G}(k,0)\right\} \\
&\vee \max_{1\leq k \leq \lf nt \rf}\left\{G(\lf ns \rf, \lf nt \rf - k + 1)\circ \theta_{0,k-1}  + \hat{G}(0,k)\right\}.
\end{align*}
Arguing with $\limsup$ and $\liminf$ and coarse graining as above, this leads to the variational problem
\begin{align*}
\bfL_{s,t}^z(\lambda) &= \max_{0\leq r \leq s}\left\{\bfL_{s-r,t}(\lambda) + r \E\left[\log \frac{a + z}{a + z - \lambda}\right] \right\} \vee \max_{0\leq u \leq t}\left\{\bfL_{s,t-u}(\lambda) + u \E\left[\log \frac{b-z}{b-z-\lambda}\right] \right\}.
\end{align*}
Substituting in the variational expression for $\bfL_{s,t}(\lambda)$, this leads to
\begin{align*}
\bfL_{s,t}^z(\lambda) &= \max_{0\leq r \leq s}\left\{\min_{\theta \in [-\ubar{\alpha},\ubar{\beta}-\lambda]}\left\{(s-r) \E \left[ \log \frac{a+\theta+\lambda}{a+\theta} \right] + t \E \left[\log \frac{b-\theta}{b-\theta-\lambda} \right]\right\} + r \E\left[\log \frac{a + z}{a + z - \lambda}\right] \right\} \\ 
&\vee \max_{0\leq u \leq t}\left\{\min_{\theta \in [-\ubar{\alpha},\ubar{\beta} -\lambda]}\left\{s \E\left[\log \frac{a+\theta+\lambda}{a+\theta}\right] + (t-u) \E\left[\log \frac{b-\theta}{b-\theta-\lambda}\right]\right\} + u \E\left[\log \frac{b-z}{b-z-\lambda}\right] \right\}.
\end{align*}
Applying a minimax theorem (for example \cite{Si58}), we obtain
\begin{align*}
\bfL_{s,t}^z(\lambda) &= \min_{\theta \in [-\ubar{\alpha},\ubar{\beta}-\lambda]}\left\{s \E \left[ \log \frac{a+\theta+\lambda}{a+\theta} \right] + t \E \left[\log \frac{b-\theta}{b-\theta-\lambda} \right]+ \max_{0\leq r \leq s}  r \E\left[\log \frac{(a + z)}{(a + z - \lambda)}\frac{(a+\theta)}{(a+\theta+\lambda)} \right] \right\} \\ 
&\vee \min_{\theta \in [-\ubar{\alpha},\ubar{\beta}-\lambda]}\left\{s \E\left[\log \frac{a+\theta+\lambda}{a+\theta}\right] + t \E\left[\log\frac{b-\theta}{b-\theta-\lambda}\right] +   \max_{0\leq u \leq t} u \E\left[\log \frac{(b-z)}{(b-z-\lambda)} \frac{(b-\theta-\lambda)}{(b-\theta)}\right]  \right\}.
\end{align*}
Write $(a+z - \lambda)(a+\theta + \lambda) = (a+z)(a+\theta) + \lambda(z-\theta-\lambda)$ to see that the inner maximum of the first term occurs at $r = s$ if $z - \lambda \leq \theta$ and $r=0$ if $z - \lambda \geq \theta$. Similarly, $\theta \mapsto (1-\lambda(b-\theta)^{-1})$ is a decreasing function, so the inner maximum of the second term occurs at $u= t$ for $\theta \leq z$ and at $u = 0$ for $\theta \geq z$. Breaking the first minimum over $[-\ubar{\alpha},\ubar{\beta} - \lambda]$ into a minimum over $[-\ubar{\alpha},z - \lambda]$ and a minimum over $[z - \lambda,\ubar{\beta}]$ and the second into a minimum over $[-\ubar{\alpha},z]$ and a minimum over $[z,\ubar{\beta}-\lambda]$, we obtain
\begin{align*}
&\min_{\theta \in [-\ubar{\alpha},\ubar{\beta}-\lambda]}\left\{s \E \left[ \log \frac{a+\theta+\lambda}{a+\theta} \right] + t \E \left[\log \frac{b-\theta}{b-\theta-\lambda} \right]+ \max_{0\leq r \leq s}  r \E\left[\log \frac{(a + z)}{(a + z - \lambda)}\frac{(a+\theta)}{(a+\theta+\lambda)} \right] \right\} \\
=& \left\{ s \E\left[ \log \frac{a+z}{a+z-\lambda}\right] + t \E\left[ \log \frac{b-z+\lambda}{b-z}\right]\right\}\wedge \min_{\theta \in [-\ubar{\alpha},z -\lambda]}\left\{s\E \left[ \log \frac{a+\theta+\lambda}{a+\theta} \right] + t \E \left[\log \frac{b-\theta}{b-\theta-\lambda} \right]\right\}
\end{align*}
and similarly, for the remaining term we have
\begin{align*}
& \min_{\theta \in [-\ubar{\alpha},\ubar{\beta}-\lambda]}\left\{s \E\left[\log \frac{a+\theta+\lambda}{a+\theta}\right] + t \E\left[\log\frac{b-\theta}{b-\theta-\lambda}\right] +   \max_{0\leq u \leq t} u \E\left[\log \frac{(b-z)}{(b-z-\lambda)} \frac{(b-\theta-\lambda)}{(b-\theta)}\right]  \right\}\\
=& \left\{s \E \left[ \log \frac{a+z+\lambda}{a+z}\right] + t \E\left[ \log \frac{b-z}{b-z-\lambda}\right]\right\}\wedge \min_{\theta \in [z, \ubar{\beta} - \lambda]}\left\{s\E \left[ \log \frac{a+\theta+\lambda}{a+\theta} \right] + t \E \left[\log \frac{b-\theta}{b-\theta-\lambda} \right]\right\}
\end{align*}
The function $\theta \mapsto s\E \left[ \log \frac{a+\theta+\lambda}{a+\theta} \right] + t \E \left[\log \frac{b-\theta}{b-\theta-\lambda} \right]$ is strictly convex with a unique minimizer. Note that the first terms in each of these minima are the values of this function evaluated at $\theta = z-\lambda$ and $\theta = z$. The result follows from strict convexity by considering whether the minimizer lies in $[-\ubar{\alpha},z], [z,z-\lambda],$ or $[z-\lambda,\ubar{\beta} - \lambda]$.
\end{proof}

\section{Extremizers of the variational problems} \label{Sreg}
In this section, we derive some regularity properties of $\bfL, \bbL, \bfJ$ and $\bbJ$ by studying the extremizers of their variational representations. The next two lemmas describe the minimizers of (\ref{E146}) and (\ref{E147}). See Figure \ref{F2} for an illustration. 

\begin{lem}
\label{L3}
Fix $s, t > 0$ and define $F = F(z, \lambda)$ for $0 < \lambda < \ubar{\alpha}+\ubar{\beta}$ and $-\ubar{\alpha} \le z \le \ubar{\beta}-\lambda$ by 
\begin{align}
F(z, \lambda) = s\E\log\left(\frac{a+z+\lambda}{a+z}\right) + t\E\log\left(\frac{b-z}{b-z-\lambda}\right). \label{E116}
\end{align}
For each $\lambda \in (0, \ubar{\alpha}+\ubar{\beta})$, there exists a unique $\zs = \zs(\lambda) \in [-\ubar{\alpha}, \ubar{\beta}-\lambda]$ such that
$\bfL_{s, t}(\lambda) = F(\zs, \lambda)$. We have $\zs = -\ubar{\alpha}$ if and only if 
\begin{align}
\label{E141}
-s\E\left[\frac{1}{(a-\ubar{\alpha}+\lambda)(a-\ubar{\alpha})}\right] + t \E\left[\frac{1}{(b+\ubar{\alpha}-\lambda)(b+\ubar{\alpha})}\right] \ge 0, \end{align}
and $\zs = \ubar{\beta}-\lambda$ if and only if 
\begin{align}
\label{E142}
-s\E\left[\frac{1}{(a+\ubar{\beta})(a+\ubar{\beta}-\lambda)}\right] + t \E\left[\frac{1}{(b-\ubar{\beta})(b-\ubar{\beta}+\lambda)}\right] \le 0. 
\end{align}
Define $\lambda_1 = \inf \{\lambda \in (0, \ubar{\alpha}+\ubar{\beta}): \text{(\ref{E141}) holds.}\} \wedge (\ubar{\alpha}+\ubar{\beta})$ and $\lambda_2 = \inf \{\lambda \in (0, \ubar{\alpha}+\ubar{\beta}): \text{(\ref{E142}) holds.}\} \wedge (\ubar{\alpha}+\ubar{\beta})$. Then $\zs = -\ubar{\alpha}$ if and only if $\lambda \ge \lambda_1$, and $\zs = \ubar{\beta}-\lambda$ if and only if $\lambda \ge \lambda_2$. For $0 < \lambda < \lambda_0 = \lambda_1 \wedge \lambda_2$, we have $\partial_z F(\zs, \lambda) = 0$. Moreover, $\zs$ is continuous on $(0, \ubar{\alpha}+\ubar{\beta})$ and continuously differentiable on $(0, \ubar{\alpha}+\ubar{\beta}) \smallsetminus \{\lambda_0\}$. We have $-1 < \zs' < 0$ for $0 < \lambda < \lambda_0$, $\lim_{\lambda \downarrow 0} \zs = \zeta(s, t)$ and $\lim_{\lambda \uparrow \ubar{\alpha}+\ubar{\beta}} \zs = -\ubar{\alpha}$. 
\end{lem}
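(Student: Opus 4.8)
The plan is to analyze the function $F(z,\lambda)$ defined in \eqref{E116} via elementary calculus in the $z$ variable for each fixed $\lambda$. First I would compute
\[
\partial_z F(z,\lambda) = s\E\left[\frac{1}{a+z+\lambda}-\frac{1}{a+z}\right] + t\E\left[\frac{1}{b-z-\lambda}-\frac{1}{b-z}\right]
= -s\lambda\E\left[\frac{1}{(a+z+\lambda)(a+z)}\right] + t\lambda\E\left[\frac{1}{(b-z-\lambda)(b-z)}\right],
\]
and observe that $z\mapsto \E[((a+z+\lambda)(a+z))^{-1}]$ is strictly decreasing while $z\mapsto \E[((b-z-\lambda)(b-z))^{-1}]$ is strictly increasing on $(-\ubar\alpha,\ubar\beta-\lambda)$. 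Hence $\partial_z F(\cdot,\lambda)$ is strictly increasing, so $F(\cdot,\lambda)$ is strictly convex, giving existence and uniqueness of the minimizer $\zs=\zs(\lambda)$ on the compact interval $[-\ubar\alpha,\ubar\beta-\lambda]$; by Theorem \ref{T1} this minimum equals $\bfL_{s,t}(\lambda)$. The boundary characterizations \eqref{E141} and \eqref{E142} are then just the first-order conditions $\partial_z F(-\ubar\alpha,\lambda)\ge 0$ and $\partial_z F(\ubar\beta-\lambda,\lambda)\le 0$ respectively (reading off the one-sided derivatives at the endpoints, which are finite or $-\infty$/$+\infty$ in the degenerate cases), combined with strict convexity.

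Next I would establish the monotonicity in $\lambda$ needed for the $\lambda_1,\lambda_2$ description. The key is that the left-hand side of \eqref{E141}, call it $\phi(\lambda)$, is such that once $\phi(\lambda_0)\ge 0$ we have $\phi(\lambda)\ge 0$ for all $\lambda>\lambda_0$ in the relevant range; this follows because as $\lambda$ increases the term $-s\E[((a-\ubar\alpha+\lambda)(a-\ubar\alpha))^{-1}]$ increases (its absolute value decreases) while one must check the $b$-term does not decrease too fast — here I would differentiate or compare directly. Actually the cleaner route is to argue in terms of $\zs$: since $F$ is jointly smooth and $\partial_z F$ is monotone in $z$, the implicit-function theorem applies away from the boundary, giving that $\zs$ is continuous on $(0,\ubar\alpha+\ubar\beta)$ and $C^1$ off the gluing points. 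For the interior regime $\lambda<\lambda_0$ we have $\partial_z F(\zs,\lambda)=0$, so differentiating this relation in $\lambda$ expresses $\zs'$ as a ratio of two explicit expectations; both numerator and denominator are positive, and comparing them term by term (each summand satisfies the pointwise bound $0 < \text{num} < \text{denom}$) yields $-1<\zs'<0$.

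The limit $\lim_{\lambda\downarrow 0}\zs=\zeta(s,t)$ follows by noting that dividing $\partial_z F(\zs,\lambda)=0$ by $\lambda$ and sending $\lambda\downarrow 0$ recovers exactly the first-order condition defining the minimizer $\zeta$ of \eqref{E28}, together with continuity/compactness to pass to the limit; uniqueness of $\zeta$ pins down the limit. For $\lim_{\lambda\uparrow\ubar\alpha+\ubar\beta}\zs=-\ubar\alpha$, I would show $\lambda_1<\ubar\alpha+\ubar\beta$: as $\lambda\uparrow\ubar\alpha+\ubar\beta$ the first term in \eqref{E141} tends to $-s\E[((a-\ubar\alpha+\lambda)(a-\ubar\alpha))^{-1}]$, which stays bounded (the singular factor $a-\ubar\alpha$ is integrated against a function bounded away from the bad region since $a-\ubar\alpha+\lambda\to a+\ubar\beta>0$), while the second term tends to $+\infty$ if $\E[(b-\ubar\beta)^{-1}]=\infty$ and otherwise to a finite positive value; in either case \eqref{E141} eventually holds, and then $\zs=-\ubar\alpha$ for all $\lambda$ near $\ubar\alpha+\ubar\beta$, in particular in the limit.

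The main obstacle I anticipate is the careful bookkeeping at the endpoints and the gluing point $\lambda_0$: verifying that the one-sided derivatives $\partial_z F$ at $z=-\ubar\alpha$ and $z=\ubar\beta-\lambda$ behave correctly when the relevant expectations are infinite (so that \eqref{E141}, \eqref{E142} make sense as stated), and confirming that $\lambda_1,\lambda_2$ are genuine thresholds (i.e.\ that the sets in their definitions are up-sets in $\lambda$) rather than merely infima. Establishing this monotonicity in $\lambda$ rigorously — and the consequent continuity of $\zs$ across $\lambda_0$ where the active-constraint structure changes — is the delicate part; everything else is routine convexity and differentiation under the integral sign, justified by the hypothesis $\ubar\alpha+\ubar\beta>0$ and $\lambda<\ubar\alpha+\ubar\beta$ which keeps all integrands dominated near the endpoints of integration.
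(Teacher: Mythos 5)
Your overall strategy (strict convexity of $F(\cdot,\lambda)$ in $z$, boundary first-order conditions, implicit function theorem, dividing the stationarity condition by $\lambda$ to recover $\zeta$) is the paper's strategy, but three of the points you defer or assert are precisely the content of the lemma, and the way you propose to handle them does not work. First, the threshold property ($\zs=-\ubar{\alpha}$ iff $\lambda\ge\lambda_1$, $\zs=\ubar{\beta}-\lambda$ iff $\lambda\ge\lambda_2$) is left as an open ``delicate part,'' yet it is a one-line check once you divide by $\lambda$: the left side of (\ref{E141}) equals $\lambda^{-1}\partial_z F(-\ubar{\alpha},\lambda)$, and \emph{both} of its terms increase in $\lambda$ (as $\lambda$ grows, $(a-\ubar{\alpha}+\lambda)^{-1}$ decreases so the negative term increases, and $(b+\ubar{\alpha}-\lambda)^{-1}$ increases so the positive term increases) — your worry that ``the $b$-term does not decrease too fast'' has the monotonicity backwards, and you never complete the argument; symmetrically $\lambda^{-1}\partial_z F(\ubar{\beta}-\lambda,\lambda)$ is decreasing in $\lambda$, which gives the up-set for (\ref{E142}). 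Continuity of $\zs$ across $\lambda_0$ is likewise not supplied by the implicit function theorem; it follows by letting $\lambda\uparrow\lambda_0$ in $\partial_z F(\zs,\lambda)=0$ and using strict monotonicity of $\partial_z F(\cdot,\lambda_0)$ in $z$ together with the fact that the minimizer sits at the boundary at $\lambda_0$.

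Second, your justification of $-1<\zs'<0$ is wrong as stated: neither the numerator $\partial_\lambda\partial_z F(\zs,\lambda)=-s\E\bigl[(a+\zs+\lambda)^{-2}\bigr]+t\E\bigl[(b-\zs-\lambda)^{-2}\bigr]$ nor the difference $\partial_z^2F-\partial_\lambda\partial_z F=s\E\bigl[(a+\zs)^{-2}\bigr]-t\E\bigl[(b-\zs)^{-2}\bigr]$ is positive ``pointwise'' or ``term by term''; both are differences of positive quantities whose signs are obtained only by comparing against the stationarity identity $s\E\bigl[((a+\zs)(a+\zs+\lambda))^{-1}\bigr]=t\E\bigl[((b-\zs)(b-\zs-\lambda))^{-1}\bigr]$, which is how the paper argues. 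Third, your route to $\lim_{\lambda\uparrow\ubar{\alpha}+\ubar{\beta}}\zs=-\ubar{\alpha}$ via ``(\ref{E141}) eventually holds'' is false in general: if $\E[(a-\ubar{\alpha})^{-1}]=\infty$ the first term of (\ref{E141}) is $-\infty$ for every $\lambda<\ubar{\alpha}+\ubar{\beta}$, and even with finite moments the limiting left side can be negative (large $s/t$), so $\lambda_1=\ubar{\alpha}+\ubar{\beta}$ is possible. The conclusion is nonetheless immediate — and this is the paper's argument — because $\zs(\lambda)\in[-\ubar{\alpha},\ubar{\beta}-\lambda]$ and the interval collapses to $\{-\ubar{\alpha}\}$ as $\lambda\uparrow\ubar{\alpha}+\ubar{\beta}$. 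Finally, when identifying $\lim_{\lambda\downarrow0}\zs=\zeta$, your argument covers only the case $\lambda_0>0$; the cases $\lambda_1=0$ or $\lambda_2=0$ need the separate observation that then $\partial_z g$ at the corresponding endpoint has the right sign, forcing $\zeta$ to equal that endpoint.
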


\begin{lem}
\label{L9}
Lemma \ref{L3} holds verbatim if $\bfL_{s, t}$, (\ref{E116}), (\ref{E141}) and (\ref{E142}) are replaced with $\bbL_{s, t}$, 
\begin{align}
F(z, \lambda) = s\log \E\left[\frac{a+z+\lambda}{a+z}\right] + t\log\E\left[\frac{b-z}{b-z-\lambda}\right]\label{E143} \\
-s \frac{\E\left[\dfrac{1}{(a-\ubar{\alpha})^2}\right]}{\E\left[\dfrac{a-\ubar{\alpha}+\lambda}{a-\ubar{\alpha}}\right]} + t \frac{\E\left[\dfrac{1}{(b+\ubar{\alpha}-\lambda)^2}\right]}{\E\left[\dfrac{b+\ubar{\alpha}}{b+\ubar{\alpha}-\lambda}\right]} \ge 0 \label{E144}\\
-s \frac{\E\left[\dfrac{1}{(a+\ubar{\beta}-\lambda)^2}\right]}{\E\left[\dfrac{a+\ubar{\beta}}{a+\ubar{\beta}-\lambda}\right]} + t \frac{\E\left[\dfrac{1}{(b-\ubar{\beta})^2}\right]}{\E\left[\dfrac{b-\ubar{\beta}+\lambda}{b-\ubar{\beta}}\right]} \le 0 \label{E145}, 
\end{align}
respectively. Here, the left-hand sides of (\ref{E144}) and (\ref{E145}) are interpreted as $-\infty$ and $\infty$ when $\E[(a-\ubar{\alpha})^{-1}] = \infty$ and $\E[(b-\ubar{\beta})^{-1}] = \infty$, respectively. 
\end{lem}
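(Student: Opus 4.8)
The plan is to rerun the proof of Lemma~\ref{L3}, indicating only the places where moving the expectation inside a logarithm forces a change of argument. Fix $s,t>0$ and write the annealed $F$ of (\ref{E143}) as $F(z,\lambda)=s\log P(z,\lambda)+t\log Q(z,\lambda)$ with $P(z,\lambda)=1+\lambda\,\E[(a+z)^{-1}]$ and $Q(z,\lambda)=1+\lambda\,\E[(b-z-\lambda)^{-1}]$ on $-\ubar{\alpha}<z<\ubar{\beta}-\lambda$, $0<\lambda<\ubar{\alpha}+\ubar{\beta}$. The first task is strict convexity of $F(\cdot,\lambda)$. Differentiating, $\partial_z\log P=-\lambda\,\E[(a+z)^{-2}]/P$ and
\[
\partial_z^2\log P \;=\; \frac{\lambda\bigl(2\,\E[(a+z)^{-3}]\,P-\lambda\,\E[(a+z)^{-2}]^2\bigr)}{P^2}\;\ge\;\frac{\lambda\bigl(2\,\E[(a+z)^{-3}]+\lambda\,\E[(a+z)^{-1}]\,\E[(a+z)^{-3}]\bigr)}{P^2}\;>\;0,
\]
the middle step using $P\ge 1$ and the Cauchy--Schwarz bound $\E[(a+z)^{-2}]^2\le\E[(a+z)^{-1}]\,\E[(a+z)^{-3}]$; the same estimate applies to $\log Q$ (and if a third moment is infinite the inequality is trivial, or one may invoke that $P,Q$ are Laplace transforms of positive measures with log-convex transforms). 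Strict convexity produces a unique minimizer $\zs=\zs(\lambda)$ of $F(\cdot,\lambda)$ over $[-\ubar{\alpha},\ubar{\beta}-\lambda]$, so $\bbL_{s,t}(\lambda)=F(\zs,\lambda)$ by Theorem~\ref{T1}.

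Next, $\partial_z F(z,\lambda)=\lambda\bigl(-s\,\E[(a+z)^{-2}]/P+t\,\E[(b-z-\lambda)^{-2}]/Q\bigr)$, so by convexity $\zs=-\ubar{\alpha}$ iff $\partial_z F(-\ubar{\alpha},\lambda)\ge 0$, which after dividing by $\lambda$ and using $P(-\ubar{\alpha},\lambda)=\E[(a-\ubar{\alpha}+\lambda)/(a-\ubar{\alpha})]$, $Q(-\ubar{\alpha},\lambda)=\E[(b+\ubar{\alpha})/(b+\ubar{\alpha}-\lambda)]$ is precisely (\ref{E144}); symmetrically $\zs=\ubar{\beta}-\lambda$ iff (\ref{E145}). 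If $\E[(a-\ubar{\alpha})^{-1}]=\infty$ then $P(z,\lambda)\to\infty$ as $z\downarrow-\ubar{\alpha}$, hence $F(-\ubar{\alpha},\lambda)=\infty$ and $\zs>-\ubar{\alpha}$ for every admissible $\lambda$, which is consistent with reading the left side of (\ref{E144}) as $-\infty$ (and likewise for (\ref{E145})). That the set of $\lambda$ for which (\ref{E144}) holds is an interval $[\lambda_1,\ubar{\alpha}+\ubar{\beta})$ reduces to monotonicity in $\lambda$ of its left side; differentiating and writing $w=b+\ubar{\alpha}-\lambda$, this comes down to $2\,\E[w^{-3}]\bigl(1+\lambda\,\E[w^{-1}]\bigr)\ge\E[w^{-2}]\bigl(\E[w^{-1}]+\lambda\,\E[w^{-2}]\bigr)$, which follows from Cauchy--Schwarz $\E[w^{-2}]\le\E[w^{-1}]^{1/2}\E[w^{-3}]^{1/2}$ and Jensen $\E[w^{-1}]^3\le\E[w^{-3}]$ (these combine to give $\E[w^{-1}]\,\E[w^{-2}]\le\E[w^{-3}]$). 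The analogous monotonicity fixes $\lambda_2$, and $\lambda_0=\lambda_1\wedge\lambda_2$.

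For $0<\lambda<\lambda_0$ neither endpoint condition holds, so $\zs$ is interior and solves $\partial_z F(\zs,\lambda)=0$; since $\partial_z^2 F>0$, the implicit function theorem makes $\zs$ a $C^1$ function of $\lambda$ there with $\zs'=-\partial_\lambda\partial_z F/\partial_z^2 F$. Substituting the first-order condition $s\,\E[(a+\zs)^{-2}]/P=t\,\E[(b-\zs-\lambda)^{-2}]/Q$ into $\partial_\lambda\partial_z F$, the inequality $\zs'<0$ becomes a relation among the moments $\E[(b-\zs-\lambda)^{-k}]$ and $\E[(a+\zs)^{-k}]$ ($k=1,2,3$) which is verified by the same Cauchy--Schwarz and Jensen estimates, with a short case split on the sign of $P-Q$. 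The bound $\zs'>-1$ then needs no further computation: the substitution $z\mapsto-z-\lambda$ carries $F$ for the data $(\alpha,\beta,s,t)$ to $F$ for $(\beta,\alpha,t,s)$ and sends $\zs$ to $-\zs-\lambda$ (and $\lambda_0$ to itself), so $\zs'<0$ for the reflected problem is exactly $\zs'>-1$ for the original. Continuity of $\zs$ throughout $(0,\ubar{\alpha}+\ubar{\beta})$ and at $\lambda_0$ follows from uniqueness of minimizers plus joint continuity of $F$, while $\lim_{\lambda\downarrow 0}\zs=\zeta(s,t)$ holds because $\lambda^{-1}F(\cdot,\lambda)$ converges to the bracket in (\ref{E28}), and $\lim_{\lambda\uparrow\ubar{\alpha}+\ubar{\beta}}\zs=-\ubar{\alpha}$ because $[-\ubar{\alpha},\ubar{\beta}-\lambda]$ collapses to $\{-\ubar{\alpha}\}$.

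The main obstacle is this last batch of inequalities. The clean quenched identity $\partial_z^2 F-\partial_\lambda\partial_z F=s\,\E[(a+z)^{-2}]-t\,\E[(b-z)^{-2}]$, which drives the slope bounds and threshold monotonicity in Lemma~\ref{L3}, has no exact counterpart once the expectation sits under a logarithm; each of the needed facts (strict convexity, $\zs'<0$, monotonicity of the left sides of (\ref{E144})--(\ref{E145})) must instead be squeezed out of the first-order condition using Cauchy--Schwarz and Jensen, and keeping the case analysis bookkept correctly is where the real effort lies. Everything else is a direct transcription of the proof of Lemma~\ref{L3}.
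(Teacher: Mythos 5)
Your proposal is correct, and it in fact supplies the details the paper omits: the paper's treatment of Lemma \ref{L9} is the single sentence that its proof is ``similar to that of Lemma \ref{L3},'' and your argument is that adaptation carried out. Two places where you genuinely depart from the quenched template deserve comment. First, the exact identities driving Lemma \ref{L3} (e.g. $\partial_z^2F-\partial_\lambda\partial_zF=s\E[(a+z)^{-2}]-t\E[(b-z)^{-2}]$) disappear once the expectation sits inside the logarithm, and you correctly replace them by the moment bounds $\E[w^{-2}]^2\le\E[w^{-1}]\E[w^{-3}]$ and $\E[w^{-1}]\E[w^{-2}]\le\E[w^{-3}]$; these do give strict convexity in $z$, the monotonicity in $\lambda$ of the left sides of (\ref{E144})--(\ref{E145}), and the endpoint characterizations (your identifications $P(-\ubar{\alpha},\lambda)=\E[(a-\ubar{\alpha}+\lambda)/(a-\ubar{\alpha})]$ etc.\ are exact). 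The one step you assert without displaying, $\zs'<0$, does go through and more simply than you suggest: with $A_k=\E[(a+\zs)^{-k}]$, $B_k=\E[(b-\zs-\lambda)^{-k}]$, the first-order condition cancels the zeroth-order terms and leaves
\begin{equation*}
\partial_\lambda\partial_zF(\zs,\lambda)=\frac{s\lambda A_1A_2}{P^2}+\frac{t\lambda\left(2B_3Q-B_1B_2-\lambda B_2^2\right)}{Q^2},
\end{equation*}
both summands being positive by the two bounds above, so no case split on the sign of $P-Q$ is needed. Second, your reflection $z\mapsto-z-\lambda$, which exchanges $(\alpha,s)\leftrightarrow(\beta,t)$, sends the minimizer to $-\zs-\lambda$ and fixes $\lambda_0$, so $\zs'>-1$ follows from $\zs'<0$ applied to the reflected data; this replaces the quenched proof's direct comparison $\partial_z^2F>\partial_\lambda\partial_zF$, which has no clean annealed analogue, and is arguably the tidier route. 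The remaining items (the $-\infty/+\infty$ conventions when $\E[(a-\ubar{\alpha})^{-1}]=\infty$ or $\E[(b-\ubar{\beta})^{-1}]=\infty$, continuity of $\zs$, and the limits as $\lambda\downarrow0$ and $\lambda\uparrow\ubar{\alpha}+\ubar{\beta}$) are sketched at the same level of detail as the paper's own quenched argument and are fine.
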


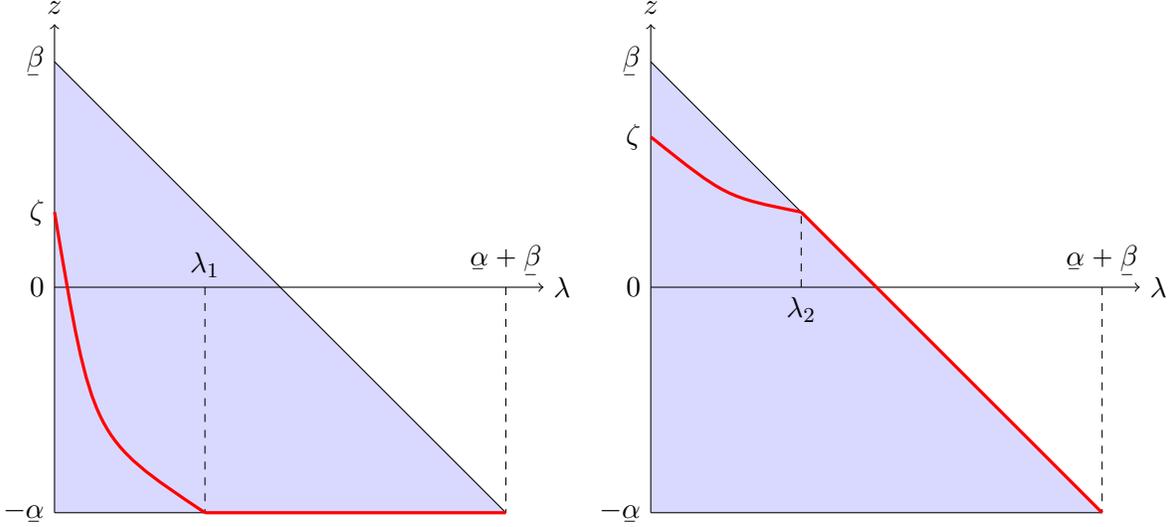
\begin{figure}[h!]
\centering
\begin{tikzpicture}[scale = 1]
\fill[blue!15](-1, 5)--(5, -1)--(-1, -1);
\draw[<-](-1, 5.5)node[above]{$z$}--(-1, -1);
\draw[->](-1, 2)--(5.5, 2)node[right]{$\lambda$};
\draw (-1, 2)node[left]{$0$};
\draw(-1, 5)node[left]{$\ubar{\beta}$};
\draw(-1, -1)node[left]{$-\ubar{\alpha}$};
\draw(5, 2)node[above]{$\ubar{\alpha}+\ubar{\beta}$};
\draw(-1, -1)--(5, -1);
\draw[dashed](5, 2)--(5, -1);
\draw(-1,5)--(5, -1);
\draw(1, 2)node[above]{$\lambda_1$};
\draw[dashed](1,2)--(1, -1);
\draw[red, very thick](1, -1)--(5, -1);
\draw(-1, 3)node[left]{$\zeta$};
\draw[red, very thick](-1, 3)..controls (-0.5, 0)..(1, -1);
\end{tikzpicture}
\begin{tikzpicture}[scale = 1]
\fill[blue!15](-1, 5)--(5, -1)--(-1, -1);
\draw[<-](-1, 5.5)node[above]{$z$}--(-1, -1);
\draw[->](-1, 2)--(5.5, 2)node[right]{$\lambda$};
\draw (-1, 2)node[left]{$0$};
\draw(-1, 5)node[left]{$\ubar{\beta}$};
\draw(-1, -1)node[left]{$-\ubar{\alpha}$};
\draw(5, 2)node[above]{$\ubar{\alpha}+\ubar{\beta}$};
\draw(-1, -1)--(5, -1);
\draw[dashed](5, 2)--(5, -1);
\draw(-1,5)--(5, -1);
\draw(1, 2)node[below]{$\lambda_2$};
\draw[dashed](1,2)--(1, 3);
\draw[red, very thick](1, 3)--(5, -1);
\draw(-1, 4)node[left]{$\zeta$};
\draw[red, very thick](-1, 4)..controls (0, 3.2)..(1, 3);
\end{tikzpicture}
\caption{\small{Sketches of the graph of the minimizers in (\ref{E146}) and (\ref{E147}) assuming (\ref{E141}) and (\ref{E144}), respectively (left) and assuming (\ref{E142}) and (\ref{E145}), respectively (right).}}
\label{F2}
\end{figure}

\begin{proof}[Proof of Lemma \ref{L3}]
Since $\partial_z^2 F > 0$, the existence and the uniqueness of $\zs$ follows. Also, $\zs = -\ubar{\alpha}$ if and only if $\partial_z F(-\ubar{\alpha}, \lambda) \ge 0$, which is (\ref{E141}). We note that $\partial_z F(-\ubar{\alpha}, \lambda) = -\infty$ if $\E[(a-\ubar{\alpha})^{-1}] = \infty$ and, otherwise, 
$\lambda^{-1} \partial_z F(-\ubar{\alpha}, \lambda)$ is a continuous, increasing function of $\lambda \in (0, \ubar{\alpha}+\ubar{\beta})$. Therefore, $\zs = -\ubar{\alpha}$ if and only if $\lambda \ge \lambda_1$. 
We similarly observe (\ref{E142}) and the equivalence of $\zs = \ubar{\beta}-\lambda$ and $\lambda \ge \lambda_2$. (Because $\partial_z F$ is increasing in $z$,  we cannot have $\lambda_1$ and $\lambda_2$ both less than $\ubar{\alpha}+\ubar{\beta}$).   

When $\lambda < \lambda_0$, the minimizer is the unique $\zs \in (-\ubar{\alpha}, \ubar{\beta}-\lambda)$ satisfying 
\begin{align}
\label{E112}
\partial_z F(\zs, \lambda) = 0. 
\end{align}
By the implicit function theorem, $\zs$ is continuously differentiable for $0 < \lambda < \lambda_0$ with derivative 
\begin{align}
\label{E124}
\zs'(\lambda) = - \frac{\partial_\lambda \partial_z F(\zs, \lambda)}{\partial_z^2 F(\zs, \lambda)}. 
\end{align}
Observing that  
\begin{align*}
&\partial_{\lambda} \partial_z F(\zs, \lambda) > -s\E\left[\frac{1}{(a+\zs)(a+\zs+\lambda)}\right]+t\E\left[\frac{1}{(b-\zs)(b-\zs-\lambda)}\right] = \lambda^{-1} \partial_z F(\zs, \lambda) = 0\\
&\partial_{z}^2 F(\zs, \lambda)-\partial_{\lambda}\partial_z F(\zs, \lambda) = s\E\left[\frac{1}{(a+\zs)^2}\right] - t\E\left[\frac{1}{(b-\zs)^2}\right] \\
&\qquad\qquad\qquad\qquad\qquad\ \ \ \ > s\E\left[\frac{1}{(a+\zs)(a+\zs+\lambda)}\right]-t\E\left[\frac{1}{(b-\zs)(b-\zs-\lambda)}\right] = 0, 
\end{align*}
we conclude that $-1 < \zs'(\lambda) < 0$. In particular, $\zs$ is monotone and has limits as $\lambda \downarrow 0$ and $\lambda \uparrow \lambda_0$. We also have continuous differentiability of $\zs$ for $\lambda > \lambda_0$. Now, supposing $\lambda_0 \in (0, \ubar{\alpha}+\ubar{\beta})$, we show that $\zs$ is continuous at $\lambda_0$. Letting $\lambda \uparrow \lambda_0$ in (\ref{E112}), we obtain 
\begin{align}
\label{E113}
\partial_z F(\lim_{\lambda \uparrow \lambda_0} \zs(\lambda), \lambda_0) = 0.
\end{align}
Since the minimizer occurs at the boundary when $\lambda = \lambda_0$, we deduce from (\ref{E113}) that 
$\lim_{\lambda \uparrow \lambda_1} \zs(\lambda) = -\ubar{\alpha}$ and $\lim_{\lambda \uparrow \lambda_2} \zs(\lambda) = \ubar{\beta}-\lambda_2$ when $\lambda_0 = \lambda_1$ and $\lambda_0 = \lambda_2$, respectively. 

Since $\zs(\lambda) \in [-\ubar{\alpha}, \ubar{\beta}-\lambda]$, we have $\lim_{\lambda \uparrow \ubar{\alpha}+\ubar{\beta}} \zs(\lambda) = -\ubar{\alpha}$. Set $\zs(0) = \lim_{\lambda \downarrow 0} \zs(\lambda)$. To calculate this limit, we consider several cases. If $\lambda_0 > 0$ then we can let $\lambda \downarrow 0$ in (\ref{E112}) and obtain 
\begin{align*}0 = \partial_z F(\zs(0), 0) = -s \E\left[\frac{1}{(a+\zs(0))^2}\right] + t\E\left[\frac{1}{(b-\zs(0))^2}\right] = \partial_z g_{\zs(0)}(s, t),\end{align*} 
which implies $\zs(0) = \zeta$. If $\lambda_1 = 0$ then $\partial_z F(-\ubar{\alpha}, 0) = \partial_z g_{-\ubar{\alpha}}(s, t) \ge 0$ and if $\lambda_2 = 0$ then $\partial_z F(\ubar{\beta}, 0) = \partial_z g_{\ubar{\beta}}(s, t) \le 0$. Hence, we get $\zeta = -\ubar{\alpha} = \zs(0)$ and $\zeta = \ubar{\beta} = \zs(0)$, respectively. 
\end{proof}
We omit the proof of Lemma \ref{L9} which is similar to that of Lemma \ref{L3}. 
\begin{lem}
\label{L8}
For each $s, t > 0$, $\bfL_{s, t}$ is continuously differentiable on $[0, \ubar{\alpha}+\ubar{\beta})$ and $\bfL_{s, t}'(0) = g(s, t)$. Furthermore, $\bfL_{s, t}'$ is continuously differentiable on $(0, \ubar{\alpha}+\ubar{\beta}) \smallsetminus \{\lambda_0\}$ and $\bfL_{s, t}'' > 0$. The same statements also hold for $\bbL_{s, t}$. 
\end{lem}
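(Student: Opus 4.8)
The plan is to exploit the representation $\bfL_{s,t}(\lambda)=F(\zs(\lambda),\lambda)$ from Theorem~\ref{T1}, the regularity of the minimizer $\zs$ from Lemma~\ref{L3}, and an envelope/implicit-function argument. The key bookkeeping device is the factorization
\[
F(z,\lambda)=P(z)-P(z+\lambda),\qquad P(w)=t\,\E\log(b-w)-s\,\E\log(a+w),
\]
obtained by writing $\log\frac{a+z+\lambda}{a+z}=\log(a+z+\lambda)-\log(a+z)$ and similarly for the $b$-term. Then $P'(w)=-g_w(s,t)$ and $P''(w)=-\partial_w g_w(s,t)$. The function $w\mapsto g_w(s,t)$ is strictly convex on $[-\ubar{\alpha},\ubar{\beta}]$ (its second derivative is $2s\E[(a+w)^{-3}]+2t\E[(b-w)^{-3}]>0$) with unique minimizer $\zeta$, so $P''$ is strictly decreasing — which is exactly the inequality $\partial_z^2F>0$ used in Lemma~\ref{L3} — and $P''>0$ on $(-\ubar{\alpha},\zeta)$, $P''(\zeta)=0$, $P''<0$ on $(\zeta,\ubar{\beta})$. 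All of this is reused verbatim in the annealed case with $F$ replaced by the objective in (\ref{E143}) and Lemma~\ref{L3} replaced by Lemma~\ref{L9}.

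First I would differentiate once. Fix $\lambda\in(0,\ubar{\alpha}+\ubar{\beta})\smallsetminus\{\lambda_0\}$; by Lemma~\ref{L3}, $\zs$ is $C^1$ near $\lambda$, hence so is $\bfL_{s,t}$, with $\bfL_{s,t}'(\lambda)=\partial_zF(\zs,\lambda)\,\zs'(\lambda)+\partial_\lambda F(\zs,\lambda)$. In the interior regime ($\lambda<\lambda_0$) the first term vanishes by the first-order condition, giving $\bfL_{s,t}'(\lambda)=\partial_\lambda F(\zs,\lambda)=g_{\zs(\lambda)+\lambda}(s,t)$; in the regime $\lambda>\lambda_0=\lambda_1$ (where $\zs\equiv-\ubar{\alpha}$, $\zs'=0$) one gets $\bfL_{s,t}'(\lambda)=g_{\lambda-\ubar{\alpha}}(s,t)$; and in the regime $\lambda>\lambda_0=\lambda_2$ (where $\zs\equiv\ubar{\beta}-\lambda$, $\zs'=-1$) a one-line simplification using $\partial_zF=P'(z)-P'(z+\lambda)$ gives $\bfL_{s,t}'(\lambda)=g_{\ubar{\beta}-\lambda}(s,t)$. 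In every case $\bfL_{s,t}'$ is the composition of the continuous map $\zs$ with $w\mapsto g_w(s,t)$, hence continuous on $(0,\ubar{\alpha}+\ubar{\beta})$; continuity across $\lambda_0$ follows from continuity of $\zs$ at $\lambda_0$ (Lemma~\ref{L3}) together with the transition identity (\ref{E113}), which makes the one-sided limits of these formulas agree. Letting $\lambda\downarrow0$ and using $\zs(\lambda)\to\zeta$ gives $\bfL_{s,t}'(\lambda)\to g_\zeta(s,t)=g(s,t)$; since $\bfL_{s,t}$ is continuous at $0$ with value $0$ and (by Proposition~\ref{P1}) convex, this upgrades to $\bfL_{s,t}\in C^1([0,\ubar{\alpha}+\ubar{\beta}))$ with $\bfL_{s,t}'(0)=g(s,t)$.

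Next I would differentiate again on $(0,\ubar{\alpha}+\ubar{\beta})\smallsetminus\{\lambda_0\}$, where $\zs\in C^1$, so $\bfL_{s,t}'\in C^1$ there. Using $F=P(z)-P(z+\lambda)$ and the implicit-function formula $\zs'=-\partial_\lambda\partial_zF/\partial_z^2F$, in the interior regime
\[
\bfL_{s,t}''(\lambda)=\partial_\lambda^2F-\frac{(\partial_\lambda\partial_zF)^2}{\partial_z^2F}=\frac{\det\mathrm{Hess}\,F}{\partial_z^2F}=\frac{-P''(\zs)\,P''(\zs+\lambda)}{P''(\zs)-P''(\zs+\lambda)}\Big|_{\zs=\zs(\lambda)}.
\]
The first-order condition $\partial_zF(\zs,\lambda)=0$ reads $g_{\zs}(s,t)=g_{\zs+\lambda}(s,t)$, which by strict convexity of $g_\cdot(s,t)$ forces $\zs<\zeta<\zs+\lambda$; hence $P''(\zs)>0>P''(\zs+\lambda)$, so numerator and denominator are both positive and $\bfL_{s,t}''>0$. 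In the boundary regimes one computes directly: for $\zs\equiv-\ubar{\alpha}$, $\bfL_{s,t}''(\lambda)=-P''(\lambda-\ubar{\alpha})$, and the definition of $\lambda_1$ (together with $\zeta=-\ubar{\alpha}$ when $\lambda_1=0$) forces $\lambda-\ubar{\alpha}>\zeta$ for $\lambda>\lambda_1$, so $P''(\lambda-\ubar{\alpha})<0$; symmetrically, for $\zs\equiv\ubar{\beta}-\lambda$, $\bfL_{s,t}''(\lambda)=P''(\ubar{\beta}-\lambda)$ with $\ubar{\beta}-\lambda<\zeta$, so $P''(\ubar{\beta}-\lambda)>0$. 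Thus $\bfL_{s,t}''>0$ throughout.

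The annealed statements follow by running the identical scheme with the objective (\ref{E143}) and Lemma~\ref{L9}; $g_w(s,t)$ again governs the $\lambda\downarrow0$ behaviour since the $\lambda$-derivative of (\ref{E143}) at $\lambda=0$ equals $g_z(s,t)$. The one step needing a genuinely separate computation is $\bbL_{s,t}''>0$: the annealed objective $G(z,\lambda)=s\log\E\frac{a+z+\lambda}{a+z}+t\log\E\frac{b-z}{b-z-\lambda}$ does not factor as a difference of one-variable functions, so instead of the clean identity above one verifies directly that $\det\mathrm{Hess}\,G(\zs,\lambda)>0$ at a critical point (equivalently $\bbL_{s,t}''=\det\mathrm{Hess}\,G/\partial_z^2G>0$, the denominator being the $\partial_z^2>0$ property used in Lemma~\ref{L9}). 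I expect this Hessian-positivity check to be the main technical obstacle; everything else is the envelope theorem plus careful tracking of the three regimes for the minimizer and the limits at $0$ and at $\lambda_0$.
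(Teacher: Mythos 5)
Your quenched argument is correct and is, in substance, the paper's own proof: the factorization $F(z,\lambda)=P(z)-P(z+\lambda)$ is exactly what lies behind the identities $\partial_\lambda^2F=\partial_\lambda\partial_zF$ and $\partial_z^2F>\partial_\lambda\partial_zF$ that the paper uses, your interior formula $\bfL_{s,t}''=-P''(\zs)P''(\zs+\lambda)\big/\bigl(P''(\zs)-P''(\zs+\lambda)\bigr)$ is the paper's expression $\bigl(\partial_z^2F\,\partial_\lambda^2F-(\partial_z\partial_\lambda F)^2\bigr)/\partial_z^2F$, and your treatment of the boundary regimes, of the matching of one-sided limits at $\lambda_0$, and of $\lambda\downarrow0$ reproduces (\ref{E115})--(\ref{E120}). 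One small caveat: in the boundary regimes your sign argument via ``$\lambda-\ubar{\alpha}>\zeta$'' tacitly needs that $\zeta=\ubar{\beta}$ cannot occur when $\lambda_1<\ubar{\alpha}+\ubar{\beta}$ (and symmetrically); this is true but deserves a line, and the paper sidesteps it by comparing $\bfL_{s,t}''$ termwise with $\lambda^{-1}\partial_zF$ at the endpoint, which is nonnegative there by the very definition of $\lambda_1,\lambda_2$.

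The genuine gap is the annealed claim $\bbL_{s,t}''>0$, which is part of the statement and which you explicitly defer (``the main technical obstacle''), so your proposal does not prove the lemma as stated. The paper is terse here too (``proved similarly''), but the step can and must be closed, and it is not as hard as you fear: writing $u=\E\bigl[\frac{a+z+\lambda}{a+z}\bigr]=1+\lambda\,\E\bigl[\frac{1}{a+z}\bigr]$ and $v=\E\bigl[\frac{b-z}{b-z-\lambda}\bigr]=1+\lambda\,\E\bigl[\frac{1}{b-z-\lambda}\bigr]$, a direct computation gives $\partial_\lambda^2G-\partial_z\partial_\lambda G= s\,u^{-2}\Var\bigl[\tfrac{1}{a+z}\bigr]^{\,\prime}$-type terms, more precisely $s\,u^{-2}\bigl(\E[(a+z)^{-2}]-\E[(a+z)^{-1}]^2\bigr)+t\,v^{-2}\bigl(\E[(b-z-\lambda)^{-2}]-\E[(b-z-\lambda)^{-1}]^2\bigr)\ge0$ by Cauchy--Schwarz. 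Thus the quenched identity $\partial_\lambda^2F=\partial_z\partial_\lambda F$ only degrades to an inequality in the right direction, and $\det\mathrm{Hess}\,G\ge \partial_z\partial_\lambda G\,\bigl(\partial_z^2G-\partial_z\partial_\lambda G\bigr)$ once $\partial_z^2G>0$ (the fact already used in Lemma \ref{L9}). It then remains to check, at the critical point where $s\,u^{-1}\E[(a+\zs)^{-2}]=t\,v^{-1}\E[(b-\zs-\lambda)^{-2}]$, that $\partial_z\partial_\lambda G>0$ and $\partial_z^2G-\partial_z\partial_\lambda G>0$; these follow from the first-order condition together with elementary moment inequalities such as $\E[X^3]\ge\E[X]\E[X^2]$ and $\E[X]\E[X^3]\ge\E[X^2]^2$ for $X=(b-\zs-\lambda)^{-1}$, i.e.\ the annealed analogues of the two displays in the proof of Lemma \ref{L3}. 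The boundary regimes for $\bbL_{s,t}$ likewise need the endpoint comparison against (\ref{E144})--(\ref{E145}) rather than your $P''$ monotonicity, since the annealed objective admits no such $P$. Until these computations are carried out, the annealed half of the lemma is asserted rather than proved.
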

\begin{proof}
Let us write $L$ for $\bfL_{s, t}$ and $F = F(z, \lambda)$ be given by (\ref{E143}). Using Lemma \ref{L3}, we compute
\begin{align}
L'(\lambda) &= \partial_z F(\zs, \lambda) \zs'(\lambda) + \partial_\lambda F(\zs, \lambda) = s\E \left[\dfrac{1}{a+\zs+\lambda}\right] + t \E \left[\dfrac{1}{b-\zs-\lambda}\right]
\label{E115}
\end{align}
for $0 < \lambda < \lambda_0$. Differentiating again, we obtain 
\begin{align*}
L''(\lambda) &= \partial_z \partial_\lambda F(\zs, \lambda) \zs'(\lambda) + \partial_{\lambda}^2 F(\zs, \lambda)= \frac{\partial_z^2 F(\zs, \lambda) \partial_{\lambda}^2 F(\zs, \lambda)-\partial_z \partial_\lambda F(\zs, \lambda)^2}{\partial_z^2 F(\zs, \lambda)} > 0, 
\end{align*}
where the inequality comes from $\partial_z^2 F(\zs, \lambda) > \partial_{\lambda} \partial_z F(\zs, \lambda)$ and $\partial_{\lambda}^2 F = \partial_{\lambda} \partial_z F$. For $\lambda > \lambda_1$,  
\begin{align}
L'(\lambda) &= s\E \left[\dfrac{1}{a-\ubar{\alpha}+\lambda}\right] + t \E \left[\dfrac{1}{b+\ubar{\alpha}-\lambda}\right] \label{E119}\\
L''(\lambda) &= -s\E \left[\dfrac{1}{(a-\ubar{\alpha}+\lambda)^2}\right] + t \E \left[\dfrac{1}{(b+\ubar{\alpha}-\lambda)^2}\right] > \partial_{z}F(-\ubar{\alpha}, \lambda) > 0.  
\end{align}
Also, for $\lambda > \lambda_2$, 
\begin{align}
L'(\lambda) &= s\E \left[\dfrac{1}{a+\ubar{\beta}-\lambda}\right] + t \E \left[\dfrac{1}{b-\ubar{\beta}+\lambda}\right] \label{E120}\\
L''(\lambda) &= s\E \left[\dfrac{1}{(a+\ubar{\beta}-\lambda)^2}\right] - t \E \left[\dfrac{1}{(b-\ubar{\beta}+\lambda)^2}\right] > -\partial_{z} F(\ubar{\beta}-\lambda, \lambda) > 0. 
\end{align}
We have verified that $L$ is continuously differentiable on $(0, \ubar{\alpha}+\ubar{\beta}) \smallsetminus \{\lambda_0\}$ and $L'$ is increasing. 

We next note that $L$ is also continuously differentiable at $\lambda_0$ when $\lambda_0 \in (0, \ubar{\alpha}+\ubar{\beta})$, for which it suffices to check that the left and right limits of $L'$ at $\lambda_0$ match. First, we consider the case $\lambda_1 \in (0, \ubar{\alpha}+\ubar{\beta})$. Then, as $\lambda \uparrow \lambda_1$, (\ref{E115}) tends to $s\E[(a-\ubar{\alpha}+\lambda_1)^{-1}]+t\E[(b-\ubar{\alpha}-\lambda_1)^{-1}]$, which equals the $\lambda \downarrow \lambda_1$ limit of (\ref{E119}). Now, suppose that $\lambda_2 \in (0, \ubar{\alpha}+\ubar{\beta})$. Then, as $\lambda \uparrow \lambda_2$, (\ref{E115}) tends to $s\E[(a+\ubar{\beta})^{-1}]+t\E[(b-\ubar{\beta})^{-1}]$, which is the same as 
\begin{align*}
s\E \left[\dfrac{1}{a+\ubar{\beta}-\lambda_2}\right] + t \E \left[\dfrac{1}{b-\ubar{\beta}+\lambda_2}\right] + \partial_z F(\ubar{\beta}-\lambda_2, \lambda_2) 
&=  s\E \left[\dfrac{1}{a+\ubar{\beta}-\lambda_2}\right] + t \E \left[\dfrac{1}{b-\ubar{\beta}+\lambda_2}\right], 
\end{align*}
the $\lambda \downarrow \lambda_2$ limit of (\ref{E120}). 

We next calculate $L'(0) = \lim_{\lambda \downarrow 0} L'(\lambda)$. If $\lambda_0 > 0$ then $\lambda \downarrow 0$ limit of (\ref{E115}) gives 
\begin{align*}L'(0) = s\E \left[\dfrac{1}{a+\zeta}\right] + t \E \left[\dfrac{1}{b-\zeta}\right] = g(s, t).\end{align*}
In the cases $\lambda_1 = 0$ and $\lambda_2$ then $\zeta = -\ubar{\alpha}$ and $\zeta = \ubar{\beta}$, respectively. Hence, letting $\lambda \downarrow 0$ in (\ref{E119}) and (\ref{E120}), respectively, we still obtain $L'(0) = g(s, t)$. 

The asserted properties of $\bbL$ are proved similarly. 
\end{proof}
Since $\bfL_{s, t}'$ increasing, $\bfL_{s, t}'(\lambda)$ has a limit (possibly $\infty$) as $\lambda \uparrow \ubar{\alpha}+\ubar{\beta}$, which we denote by $\bfL_{s,t}'(\ubar{\alpha}+\ubar{\beta})$. Similarly, let us write $\bbL_{s, t}'(\ubar{\alpha}+\ubar{\beta})$ for $\lim_{\lambda \uparrow \ubar{\alpha}+\ubar{\beta}} \bbL_{s, t}'(\lambda)$. The precise values of these limits will be needed in the next section.
\begin{cor}
\label{L10}
Fix $s, t > 0$. 
\begin{align*}
\bfL_{s, t}'(\ubar{\alpha}+\ubar{\beta}) &= \begin{cases} s \E\left[\dfrac{1}{a-\ubar{\alpha}}\right] + t\E\left[\dfrac{1}{b+\ubar{\alpha}}\right] \quad \text{ if } -s \E\left[\dfrac{1}{(a-\ubar{\alpha})(a+\ubar{\beta})}\right] + t\E\left[\dfrac{1}{(b+\ubar{\alpha})(b-\ubar{\beta})}\right] \le 0 \\ 
s \E\left[\dfrac{1}{a+\ubar{\beta}}\right] + t\E\left[\dfrac{1}{b-\ubar{\beta}}\right] \quad \text{ otherwise. }\end{cases} \\
\bbL_{s, t}'(\ubar{\alpha}+\ubar{\beta}) &= \begin{cases}s\dfrac{\E\left[\dfrac{a+\ubar{\beta}}{(a-\ubar{\alpha})^2}\right]}{\E\left[\dfrac{a+\ubar{\beta}}{a-\ubar{\alpha}}\right]}+t \dfrac{\E\left[\dfrac{1}{b-\ubar{\beta}}\right]}{\E\left[\dfrac{b+\ubar{\alpha}}{b-\ubar{\beta}}\right]} \quad \text{ if }-s\dfrac{\E\left[\dfrac{1}{(a-\ubar{\alpha})^2}\right]}{\E\left[\dfrac{a+\ubar{\beta}}{a-\ubar{\alpha}}\right]}+t \dfrac{\E\left[\dfrac{1}{(b-\ubar{\beta})^2}\right]}{\E\left[\dfrac{b+\ubar{\alpha}}{b-\ubar{\beta}}\right]} \le 0\\ 
s\dfrac{\E\left[\dfrac{1}{a-\ubar{\alpha}}\right]}{\E\left[\dfrac{a+\ubar{\beta}}{a-\ubar{\alpha}}\right]}+t \dfrac{\E\left[\dfrac{b+\ubar{\alpha}}{(b-\ubar{\beta})^2}\right]}{\E\left[\dfrac{b+\ubar{\alpha}}{b-\ubar{\beta}}\right]}
\quad \text{ otherwise. }\end{cases}
\end{align*}
\end{cor}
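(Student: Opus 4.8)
The plan is to obtain both limits by sending $\lambda\uparrow\ubar{\alpha}+\ubar{\beta}$ in the explicit expressions for $\bfL_{s,t}'(\lambda)$ and $\bbL_{s,t}'(\lambda)$ established in the proof of Lemma~\ref{L8}, once one has identified which of the three possible forms of the minimizer $\zs(\lambda)$ governs $\lambda$ near $\ubar{\alpha}+\ubar{\beta}$ via Lemma~\ref{L3} (resp.\ Lemma~\ref{L9}). By Lemma~\ref{L8}, $\bfL_{s,t}'$ is increasing on $(0,\ubar{\alpha}+\ubar{\beta})$, so the limit $\bfL_{s,t}'(\ubar{\alpha}+\ubar{\beta}):=\lim_{\lambda\uparrow\ubar{\alpha}+\ubar{\beta}}\bfL_{s,t}'(\lambda)$ exists in $(g(s,t),\infty]$, and similarly for $\bbL$. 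By Lemma~\ref{L3}, since $\lambda_1$ and $\lambda_2$ cannot both be strictly below $\ubar{\alpha}+\ubar{\beta}$, for $\lambda$ near $\ubar{\alpha}+\ubar{\beta}$ exactly one of the following occurs: (a) $\lambda_1<\ubar{\alpha}+\ubar{\beta}$, so $\zs(\lambda)=-\ubar{\alpha}$ on $[\lambda_1,\ubar{\alpha}+\ubar{\beta})$ and $\bfL_{s,t}'(\lambda)$ is given by (\ref{E119}); (b) $\lambda_2<\ubar{\alpha}+\ubar{\beta}$, so $\zs(\lambda)=\ubar{\beta}-\lambda$ on $[\lambda_2,\ubar{\alpha}+\ubar{\beta})$ and $\bfL_{s,t}'(\lambda)$ is given by (\ref{E120}); or (c) $\lambda_1=\lambda_2=\ubar{\alpha}+\ubar{\beta}$, so $\zs(\lambda)$ is interior for every $\lambda<\ubar{\alpha}+\ubar{\beta}$ and $\bfL_{s,t}'(\lambda)$ is given by (\ref{E115}).

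One then passes to the limit in each case by monotone convergence. In case (a), $a-\ubar{\alpha}+\lambda\uparrow a+\ubar{\beta}$ and $b+\ubar{\alpha}-\lambda\downarrow b-\ubar{\beta}$, so (\ref{E119}) gives $\bfL_{s,t}'(\ubar{\alpha}+\ubar{\beta})=s\E[(a+\ubar{\beta})^{-1}]+t\E[(b-\ubar{\beta})^{-1}]$. In case (b), $a+\ubar{\beta}-\lambda\downarrow a-\ubar{\alpha}$ and $b-\ubar{\beta}+\lambda\uparrow b+\ubar{\alpha}$, so (\ref{E120}) gives $\bfL_{s,t}'(\ubar{\alpha}+\ubar{\beta})=s\E[(a-\ubar{\alpha})^{-1}]+t\E[(b+\ubar{\alpha})^{-1}]$. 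In case (c), Lemma~\ref{L3} gives $\zs(\lambda)\to-\ubar{\alpha}$, and since $-1<\zs'<0$ the maps $\lambda\mapsto a+\zs(\lambda)+\lambda$ and $\lambda\mapsto b-\zs(\lambda)-\lambda$ are monotone with limits $a+\ubar{\beta}$ and $b-\ubar{\beta}$; hence (\ref{E115}) again gives $s\E[(a+\ubar{\beta})^{-1}]+t\E[(b-\ubar{\beta})^{-1}]$, the same value as in (a). All of these identities are understood in $[0,\infty]$: when $\E[(b-\ubar{\beta})^{-1}]=\infty$ (resp.\ $\E[(a-\ubar{\alpha})^{-1}]=\infty$) the value is $+\infty$.

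It remains to decide which of (a)/(b)/(c) occurs in terms of the data. By Lemma~\ref{L3}, case (b) occurs precisely when $\lambda_2<\ubar{\alpha}+\ubar{\beta}$, i.e.\ when (\ref{E142}) holds for some $\lambda<\ubar{\alpha}+\ubar{\beta}$; moreover $\{\lambda\in(0,\ubar{\alpha}+\ubar{\beta}):\text{(\ref{E142}) holds}\}=[\lambda_2,\ubar{\alpha}+\ubar{\beta})$, and the left side of (\ref{E142}) is continuous in $\lambda$ with $\lim_{\lambda\uparrow\ubar{\alpha}+\ubar{\beta}}$ equal to $\ell:=-s\E[((a-\ubar{\alpha})(a+\ubar{\beta}))^{-1}]+t\E[((b+\ubar{\alpha})(b-\ubar{\beta}))^{-1}]$, the quantity in the statement. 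Thus $\ell<0$ forces case (b) and $\ell>0$ forces case (a) or (c). When $\ell=0$ the regime is not determined, but there the two candidate values agree: the identity $\E[(a-\ubar{\alpha})^{-1}]-\E[(a+\ubar{\beta})^{-1}]=(\ubar{\alpha}+\ubar{\beta})\E[((a-\ubar{\alpha})(a+\ubar{\beta}))^{-1}]$ and its $b$-analogue give that the difference of the two displayed expressions equals $-(\ubar{\alpha}+\ubar{\beta})\ell$. Consequently $\bfL_{s,t}'(\ubar{\alpha}+\ubar{\beta})$ is the larger of the two expressions, and it equals $s\E[(a-\ubar{\alpha})^{-1}]+t\E[(b+\ubar{\alpha})^{-1}]$ exactly when $\ell\le0$, which is the asserted dichotomy. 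The annealed identity is proved in the same way, with Lemma~\ref{L3}, (\ref{E115}), (\ref{E119}), (\ref{E120}) and (\ref{E142}) replaced by Lemma~\ref{L9}, the $\bbL$-analogues of those derivative formulas (the $\lambda$-derivatives of $F(\zs,\lambda)$, $F(-\ubar{\alpha},\lambda)$, $F(\ubar{\beta}-\lambda,\lambda)$ with $F$ as in (\ref{E143})), and (\ref{E145}); the relevant sign is that of $\lim_{\lambda\uparrow\ubar{\alpha}+\ubar{\beta}}$ of the left side of (\ref{E145}), and the coincidence of the two annealed expressions when this limit vanishes follows from $\E[(a+\ubar{\beta})(a-\ubar{\alpha})^{-2}]-\E[(a-\ubar{\alpha})^{-1}]=(\ubar{\alpha}+\ubar{\beta})\E[(a-\ubar{\alpha})^{-2}]$ and its $b$-counterpart.

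The argument is essentially bookkeeping rather than hard analysis, and the two places that need care are: correctly extracting from Lemma~\ref{L3}/Lemma~\ref{L9} which regime governs $\lambda$ near $\ubar{\alpha}+\ubar{\beta}$ and matching it to the sign of $\ell$; and justifying that the threshold in the statement is $\le0$ rather than $<0$ — which is legitimate exactly because the two formulas coincide on $\ell=0$, via the algebraic identity noted above. One should also handle the degenerate case where $\E[(a-\ubar{\alpha})^{-1}]$ and $\E[(b-\ubar{\beta})^{-1}]$ are both infinite, where $\ell$ is formally $-\infty+\infty$; there both candidate expressions equal $+\infty$ and agree with $\bfL_{s,t}'(\ubar{\alpha}+\ubar{\beta})=\bbL_{s,t}'(\ubar{\alpha}+\ubar{\beta})=+\infty$, so the statement holds regardless of how the condition is read.
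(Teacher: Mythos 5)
Your argument is correct and is exactly the route the paper intends: the corollary is stated without proof as an immediate consequence of the regime characterization of $\zs(\lambda)$ in Lemmas \ref{L3} and \ref{L9} together with the derivative formulas (\ref{E115}), (\ref{E119}), (\ref{E120}) (and their annealed analogues) from the proof of Lemma \ref{L8}, followed by monotone/dominated convergence as $\lambda\uparrow\ubar{\alpha}+\ubar{\beta}$. Your extra care at the boundary case $\ell=0$ (via the identity showing the two candidate values differ by $-(\ubar{\alpha}+\ubar{\beta})\ell$) and in the doubly infinite case is sound and fills in details the paper leaves implicit.
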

The next lemma establishes continuous differentiability of $\bfJ_{s, t}(r)$ and $\bbJ_{s, t}(r)$ and shows that these functions are linear in $r$ for $r > \bfL_{s, t}'(\ubar{\alpha}+\ubar{\beta})$ and $r > \bbL_{s, t}'(\ubar{\alpha}+\ubar{\beta})$, respectively. 
\begin{lem}
\label{L1}
Fix $s, t > 0$. For each $r \ge g(s, t)$, there exists a unique $\lam(r) \in [0, \ubar{\alpha}+\ubar{\beta}]$ such that 
$\bfJ_{s, t}(t) = \lam r - \bfL_{s, t}(\lam)$. Moreover, $\bfJ_{s, t}$ is continuously differentiable and $\bfJ_{s, t}'(r) = \lam(r)$ for $r \ge g(s, t)$. If $r > g(s,t)$, then $\lam > 0$. If $r \ge \bfL_{s, t}'(\ubar{\alpha}+\ubar{\beta})$ then $\lam = \ubar{\alpha}+\ubar{\beta}$, while if $r \in [g(s, t), \bfL_{s, t}'(\ubar{\alpha}+\ubar{\beta}))$ then $\bfL_{s, t}'(\lam) = r$. The same statements hold if we replace $\bfJ_{s, t}$ and $\bfL_{s, t}$ with $\bbJ_{s, t}$ and $\bbL_{s, t}$, respectively. 
\end{lem}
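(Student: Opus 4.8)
The plan is to identify $\bfJ_{s,t}$ with a Legendre--Fenchel transform of $\bfL_{s,t}$ and then read off every assertion from the convexity and differentiability of $\bfL_{s,t}$ recorded in Lemma~\ref{L8}. First I would combine the variational formula (\ref{E26}) with Theorem~\ref{T1} and the identity $\bfL_{s,t}(\lambda)=\inf_z F(z,\lambda)$ from Lemma~\ref{L3} to obtain, for $r\ge g(s,t)$,
\begin{align*}
\bfJ_{s,t}(r)=\sup_{\lambda\in(0,\ubar{\alpha}+\ubar{\beta}]}\Big\{\lambda r-\inf_{z\in[-\ubar{\alpha},\ubar{\beta}-\lambda]}F(z,\lambda)\Big\}=\sup_{\lambda\in[0,\ubar{\alpha}+\ubar{\beta}]}\{\lambda r-\bfL_{s,t}(\lambda)\},
\end{align*}
where $\lambda=0$ may be adjoined because $\bfL_{s,t}(0)=0$ and $\lambda r-\bfL_{s,t}(\lambda)\ge 0$ near $0$ for $r\ge g(s,t)$. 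The inputs I would quote from Lemma~\ref{L8} are that $\bfL_{s,t}$ is convex, is $C^1$ on $[0,\ubar{\alpha}+\ubar{\beta})$ with $\bfL_{s,t}'(0)=g(s,t)$, and that $\bfL_{s,t}'$ is continuous and strictly increasing there (it has positive derivative away from the single point $\lambda_0$); I would also use that $\bfL_{s,t}$ is nondecreasing and lower semicontinuous on the closed interval $[0,\ubar{\alpha}+\ubar{\beta}]$, so the displayed supremum is the maximum of an upper semicontinuous function over a compact set.

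Next I would analyze $h_r(\lambda):=\lambda r-\bfL_{s,t}(\lambda)$, which attains its maximum on $[0,\ubar{\alpha}+\ubar{\beta}]$ and is $C^1$ on $[0,\ubar{\alpha}+\ubar{\beta})$ with $h_r'(\lambda)=r-\bfL_{s,t}'(\lambda)$ strictly decreasing and $h_r'(0)=r-g(s,t)\ge 0$. Using the notation $\bfL_{s,t}'(\ubar{\alpha}+\ubar{\beta})$ for the (possibly infinite) limit $\lim_{\lambda\uparrow\ubar{\alpha}+\ubar{\beta}}\bfL_{s,t}'(\lambda)$, there are two cases. If $r<\bfL_{s,t}'(\ubar{\alpha}+\ubar{\beta})$, the intermediate value theorem together with strict monotonicity gives a unique $\lam=\lam(r)\in[0,\ubar{\alpha}+\ubar{\beta})$ with $\bfL_{s,t}'(\lam)=r$; then $h_r$ increases on $[0,\lam]$ and decreases on $[\lam,\ubar{\alpha}+\ubar{\beta})$, so $\lam$ is the unique maximizer. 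If $r\ge\bfL_{s,t}'(\ubar{\alpha}+\ubar{\beta})$, then this limit is finite, hence $\bfL_{s,t}(\ubar{\alpha}+\ubar{\beta})<\infty$ (a bounded derivative bounds the function on the bounded interval), $h_r'>0$ on all of $[0,\ubar{\alpha}+\ubar{\beta})$, and the unique maximizer is $\lam=\ubar{\alpha}+\ubar{\beta}$. In either case the maximizer $\lam(r)$ is unique, which forces $\bfL_{s,t}(\lam(r))<\infty$ and $\bfJ_{s,t}(r)=\lam(r)r-\bfL_{s,t}(\lam(r))$, and $\lam(r)>0$ precisely when $r>g(s,t)$ since $\bfL_{s,t}'(0)=g(s,t)$.

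For the regularity I would first check that $r\mapsto\lam(r)$ is continuous: on $[g(s,t),\bfL_{s,t}'(\ubar{\alpha}+\ubar{\beta}))$ it is the inverse of the continuous strictly increasing bijection $\bfL_{s,t}'$, and when $\bfL_{s,t}'(\ubar{\alpha}+\ubar{\beta})<\infty$ it glues continuously to the constant $\ubar{\alpha}+\ubar{\beta}$ at that value, so $\lam$ is continuous on $[g(s,t),\infty)$. Then, viewing $\bfJ_{s,t}$ as the conjugate of the closed proper convex function equal to $\bfL_{s,t}$ on $[0,\ubar{\alpha}+\ubar{\beta}]$ and $+\infty$ elsewhere, I would invoke the standard fact (see \cite{Rockafellar}) that a Legendre--Fenchel transform is differentiable at a point exactly when the defining supremum has a unique maximizer, with derivative equal to that maximizer; uniqueness of $\lam(r)$ then gives $\bfJ_{s,t}'=\lam$, and continuity of $\lam$ upgrades this to $\bfJ_{s,t}\in C^1([g(s,t),\infty))$ (consistent with $\bfJ_{s,t}\equiv 0$ to the left of $g(s,t)$, where the right derivative at $g(s,t)$ is $\lam(g(s,t))=0$). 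The annealed statement follows by repeating the argument verbatim with (\ref{E26.1}), the annealed Lyapunov exponent $\bbL_{s,t}$ (Theorem~\ref{T1}, Lemma~\ref{L9}) and the $\bbL_{s,t}$ half of Lemma~\ref{L8}. I expect the only real obstacle to be the bookkeeping at the right endpoint $\lambda=\ubar{\alpha}+\ubar{\beta}$: keeping track of when $\bfL_{s,t}$ is finite there (and noting that $\bfL_{s,t}(\ubar{\alpha}+\ubar{\beta})=\infty$ forces $\bfL_{s,t}'(\ubar{\alpha}+\ubar{\beta})=\infty$, so the endpoint is then never the maximizer), ensuring the supremum is attained, and checking that $\lam$ is continuous across $r=\bfL_{s,t}'(\ubar{\alpha}+\ubar{\beta})$.
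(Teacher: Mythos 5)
Your proposal is correct and follows essentially the same route as the paper's proof: both identify $\bfJ_{s,t}$ with the constrained Legendre--Fenchel transform of $\bfL_{s,t}$, use the monotonicity and regularity of $\bfL_{s,t}'$ from Lemma \ref{L8} to locate the unique maximizer ($(\bfL_{s,t}')^{-1}(r)$ below $\bfL_{s,t}'(\ubar{\alpha}+\ubar{\beta})$, the endpoint $\ubar{\alpha}+\ubar{\beta}$ above it), establish continuity of $r\mapsto\lam(r)$, and conclude $\bfJ_{s,t}'=\lam$. The only difference is in the last step, where you invoke the Rockafellar subdifferential characterization of conjugates (legitimate here, since the extended function is indeed closed, and your endpoint bookkeeping resolves as you expect), while the paper differentiates $\bfJ_{s,t}(r)=\lam(r)r-\bfL_{s,t}(\lam(r))$ directly by the chain rule away from $r=\bfL_{s,t}'(\lambda_0)$ and uses continuity of $\lam$ to cover that point.
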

\begin{proof}
We have $J(r) = \sup_{0 < \lambda < \ubar{\alpha}+\ubar{\beta}} \{\lambda r - L(\lambda)\}$, where $(L, J)$ pair refers to either $(\bfL_{s, t}, \bfJ_{s, t})$ or $(\bbL_{s, t}, \bbJ_{s, t})$. The $\lambda$-derivative of the function inside the supremum is $r-L'(\lambda)$. By Lemma \ref{L8}, $L'$ is continuous and increasing from $g(s, t)$ to the limit $L'(\ubar{\alpha}+\ubar{\beta})$ on $(0, \ubar{\alpha}+\ubar{\beta})$. It follows that the unique maximizer $\lam$ is at $\ubar{\alpha}+\ubar{\beta}$ if $r \ge L'(\ubar{\alpha}+\ubar{\beta})$ and at $(L')^{-1}(r)$, otherwise. In addition, $\lam$ is increasing and continuous on $[g(s, t), +\infty)$. Since $L'$ is differentiable and has nonzero derivative for $\lambda \in (0, \ubar{\alpha}+\ubar{\beta}) \smallsetminus {\lambda_0}$, whenever $r \neq L'(\lambda_0)$, we have $J'(r) = \lam(r) + \lam'(r)r-L'(\lam)\lam'(r) = \lam(r)$. Then continuity of $\lam$ implies that $J$ is continuously differentiable for all $r \ge g(s, t)$ including $L'(\lambda_0)$ when $\lambda_0 \in (0, \ubar{\alpha}+\ubar{\beta})$. 
\end{proof}

\begin{proof}[Proof of Theorem \ref{T4}]
This theorem is included in the preceding lemma. 
\end{proof}

\section{Left tail estimates}
\label{S4}

We now estimate the left tail in both the quenched and annealed settings. The first result shows that in the quenched case, the rate $n$ large deviation rate function will be trivial for deviations to the left of the shape function $g(s,t)$. This proof is based on the proof of \cite[Theorem 4.1]{Seppalainen3}, which was adapted from an argument in \cite{Kesten}.

\begin{proof}[Proof of Lemma \ref{quenchedlb}]
First, fix $s,t,\epsilon >0$ and rational. Take $m \in \mathbb{N}$ large enough that \\
$m^{-1} \mathbb{E} \hspace{.25pc} G(\lfloor m s \rfloor, \lfloor m t \rfloor) \geq g(s,t) - \frac{\epsilon}{2}$. We coarse grain the lattice into pairwise disjoint translates of the set $\{1, \dots , \lfloor m s \rfloor\} \times \{1, \dots, \lfloor m t \rfloor\}$. Toward this end, define
\begin{align*}
A_{a,b}^{k,\ell} &= \{1 + a, \dots, a + k\} \times \{1 + b, \dots, \ell + b\}, &\qquad B_i^j = A_{(j+ i)\lfloor m s \rfloor, j \lfloor m t \rfloor}^{\lfloor mx \rfloor, \lfloor my \rfloor}.
\end{align*}
Take $n$ large and let $L = \lfloor \frac{n}{m} - \lfloor \sqrt{n} \rfloor - 2\rfloor$. For each such $k \leq \lfloor \sqrt{n} \rfloor$, define a diagonal by $D_k = \bigcup_{j=0}^{L} B_k^j$. We observe that the passage time from the bottom left corner of $B_i^j$ to the top-right corner of $B_i^j$, $G_{i,j} \equiv G(\lfloor ms \rfloor, \lfloor mt \rfloor) \circ \tau_{(i+j)\lfloor ms \rfloor, j \lfloor mt \rfloor}$, has the same distribution as $G_{0,0}$ under $\ameas$. Moreover, if $(i_1,j_1) \neq (i_2, j_2)$, then $B_{i_1}^{j_1} \bigcap B_{i_2}^{j_2} = \emptyset$ and consequently $\{G_{i,j}\}_{i,j\geq 0}$ forms an independent family under $\qmeas$. 

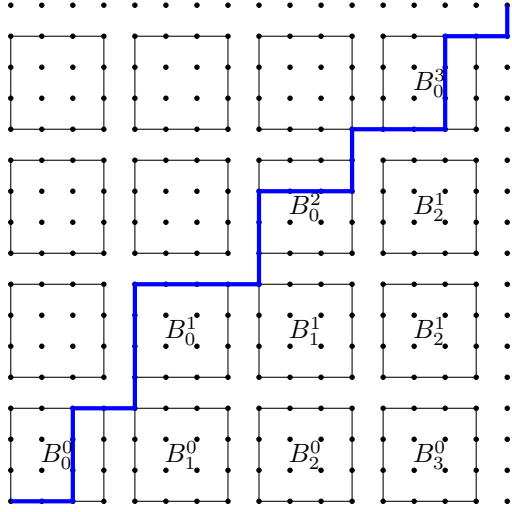
\begin{figure} [H]
\centering
\begin{tikzpicture}[scale=1.65]
\foreach \x in {1,1.25,1.5,1.75,2,2.25,2.5,2.75,3,3.25,3.5,3.75,4,4.25,4.5,4.75,5}
	\foreach \y in {1,1.25,1.5,1.75,2,2.25,2.5,2.75,3,3.25,3.5,3.75,4,4.25,4.5,4.75,5}
		\filldraw (\x,\y) circle (.5pt);
\foreach \i in {1,2,3,4}
	\foreach \j in {1,2,3,4}	
		\draw (\i, \j) -- (\i,\j.75) --(\i.75,\j.75) -- (\i.75,\j) -- (\i,\j);
\draw[color=blue, ultra thick] (1,1) -- (1.5,1) -- (1.5,1.75) -- (2,1.75) --(2,2.75) --(2.75,2.75) -- (3,2.75) -- (3,3.5) -- (3.75,3.5) -- (3.75,4) -- (4.5,4) -- (4.5,4.75) -- (5,4.75) -- (5,5);
\node[color=black] at (1.375,1.375) {\small{$B_0^0$}};
\node[color=black] at (2.375,2.375) {\small{$B_0^1$}};
\node[color=black] at (3.375,3.375) {\small{$B_0^2$}};
\node[color=black] at (4.375,4.375) {\small{$B_0^3$}};
\node[color=black] at (2.375,1.375) {\small{$B_1^0$}};
\node[color=black] at (3.375,2.375) {\small{$B_1^1$}};
\node[color=black] at (4.375,3.375) {\small{$B_2^1$}};
\node[color=black] at (3.375,1.375) {\small{$B_2^0$}};
\node[color=black] at (4.375,2.375) {\small{$B_2^1$}};
\node[color=black] at (4.375,1.375) {\small{$B_3^0$}};
\end{tikzpicture}

\caption{\small{A path passing through the bottom-left and top-right vertices of $B_0^j$ for each $j$.}}\label{fig:left}
\end{figure}

Denote by $\Pi_k$ the collection of paths from $(1,1)$ to $(\lfloor ns\rfloor, \lfloor nt \rfloor)$ passing through the bottom-left and top-right vertices of $B_k^j$ for each $j$. See Figure \ref{fig:left}. We have
\begin{align*}
G(\lfloor ns \rfloor, \lfloor nt \rfloor) &\geq \max_{k \leq \lfloor \sqrt{n}\rfloor} \max_{\pi \in \Pi_k} \sum_{(i,j) \in \pi} W(i,j) \geq \max_{k \leq \lfloor \sqrt{n} \rfloor} \sum_{j \leq L} G_{k,j}.
\end{align*}
It follows that
\begin{align*}
\qmeas\left(n^{-1}G(\lfloor ns \rfloor, \lfloor nt \rfloor) \leq (g(s,t) - \epsilon) \right) &\leq \qmeas \left( \max_{k \leq \lfloor \sqrt{n}\rfloor}  n^{-1} \sum_{j=0}^L G_{k,j} \leq g(s,t) - \epsilon \right) \\
&= \prod_{k=0}^{\lfloor \sqrt{n}\rfloor} \qmeas\left( n^{-1} \sum_{j=0}^L G_{k,j} \leq g(s,t) - \epsilon \right).
\end{align*}
Now, fix $\lambda > 0$ sufficiently small that $C \equiv \lambda m \frac{\epsilon}{2} - \frac{\lambda^2}{2} \aexp G_{0,0}^2 > 0$ and $\lambda \aexp G_{0,0} - \frac{\lambda^2}{2}\aexp G_{0,0}^2 < 1$ and notice that 
$\qexp\left[ e^{-\lambda G_{j,k}} \right] =  \qexp\left[ e^{-\lambda G_{0,0}} \right] \circ \tau_{(j+k)\lfloor ms \rfloor, k \lfloor mt \rfloor}.$ The ergodic theorem then implies that the following limit holds $\mu$ almost surely:
\begin{align*}
\lim_{L \to \infty}\frac{1}{L} \sum_{j=0}^L \log \qexp\left[e^{-\lambda G_{k,j}}\right] &= \E \left[ \log \qexp\left[e^{-\lambda G_{0,0}}\right] \right].
\end{align*}
Jensen's inequality gives 
$\E \left[ \log \qexp\left[e^{-\lambda G_{0,0}}\right] \right] \leq \log \aexp \left[e^{-\lambda G_{0,0}}\right] < -\lambda \aexp G_{0,0} + \frac{\lambda^2}{2} \aexp G_{0,0}^2.$ By the exponential Markov inequality and independence under $\qmeas$, we have
\begin{align*}
\frac{1}{L} \log \qmeas \left( \sum_{j=0}^L G_{k,j} < n(g(s,t) - \epsilon)\right) &\leq \frac{1}{L} \left( \sum_{j=0}^L \log \qexp \left[e^{-\lambda G_{j,k}}\right] + \lambda n (g(s,t) - \epsilon)\right).
\end{align*}
Recalling that $L^{-1}n \to m$ as $n \to \infty$, and our assumption that $\aexp G_{0,0} > m(g(s,t) - \frac{\epsilon}{2})$, it follows that
$ \limsup_{L \to \infty}  L^{-1} \log \qmeas \left( \sum_{j=0}^L G_{k,j} < n(g(s,t) - \epsilon)\right)  \leq  - \lambda m \frac{\epsilon}{2} + \frac{\lambda^2}{2} \aexp G_{0,0}^2  = - C$ almost surely.
Therefore, for each $k$ there exists a random $N_k$ so that for $n \geq N_k$
\begin{align*}
\qmeas \left( \sum_{j=0}^L G_{k,j} < n(g(s,t) - \epsilon)\right) \leq \text{exp}\left\{- n \frac{C}{2m}\right\}.
\end{align*}
For any fixed $K$ and $n \geq \max_{k \leq K} N_k$, we see that $\ameas$ almost surely we have
\begin{align*}
- \frac{1}{n} \log \qmeas\left(n^{-1}G(\lfloor ns \rfloor, \lfloor nt \rfloor) \leq (g(s,t) - \epsilon) \right) 
&\geq \sum_{k=0}^K - \frac{1}{n} \log \qmeas\left( n^{-1} \sum_{j=0}^L G_{k,j} \leq g(s,t) - \epsilon \right) \\
&\geq K \frac{C}{2m}.
\end{align*}
Sending $n \to \infty$ and then $K \to \infty$ gives the result for fixed $s,t,\epsilon >0$. For the general result, we work on the $\mu$ almost sure set where the result holds simultaneously for all rational $s,t,\epsilon >0$. Take $s,t,\epsilon > 0$ and $s_1 < s$ and $t_1 < t$ rational with the property that $\epsilon - g(s,t) + g(s_1,t_1)  >  \epsilon_1 > 0$ for rational $\epsilon_1$. This is possible by continuity of $g$. The result follows from observing that
\begin{align*}
\qmeas\left(n^{-1}G(\lfloor ns \rfloor, \lfloor nt \rfloor) \leq g(s,t) - \epsilon \right) &\leq \qmeas\left(n^{-1}G(\lfloor ns_1 \rfloor, \lfloor nt_1 \rfloor) \leq g(s_1,t_1) - \epsilon_1 \right). \qedhere
\end{align*}
\end{proof}
\begin{cor}
$\mu$ a.s. for $s,t,\lambda > 0$,
$\lim_{n \to \infty} n^{-1}\log \qexp \left[\expm\left\{-\lambda G(\lfloor ns \rfloor, \lfloor nt \rfloor)\right\}\right] = -\lambda g(s,t).$
\end{cor}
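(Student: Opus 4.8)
The plan is to sandwich $n^{-1}\log \qexp[\exp\{-\lambda G(\lfloor ns\rfloor, \lfloor nt\rfloor)\}]$ between $-\lambda(g(s,t)+\delta)$ and $-\lambda(g(s,t)-\delta)$ for every $\delta>0$ and then let $\delta\downarrow 0$. I work on the single $\mu$-full-measure event on which both the almost sure limit $n^{-1}G(\lfloor ns\rfloor, \lfloor nt\rfloor)\to g(s,t)$ recorded in Section~\ref{S3} (see also \cite{Emrah}) holds $\qmeas$-a.s.\ for all $s,t>0$, and the conclusion of Lemma~\ref{quenchedlb} holds for all $s,t>0$ and all rational $r<g(s,t)$. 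Since the estimates below are uniform in $\lambda>0$, this is enough to obtain the statement simultaneously for all $s,t,\lambda>0$. Observe also that $g(s,t)>0$ for $s,t>0$, since each term in the minimization \eqref{E28} is strictly positive (e.g.\ $s\E[(a+z)^{-1}]\ge s\E[(a+\ubar{\beta})^{-1}]>0$ for all admissible $z$).

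For the lower bound, fix $\delta>0$. Since $n^{-1}G(\lfloor ns\rfloor, \lfloor nt\rfloor)\to g(s,t)$ $\qmeas$-a.s., we have $\qmeas(G(\lfloor ns\rfloor, \lfloor nt\rfloor)\le n(g(s,t)+\delta))\ge \tfrac12$ for all $n$ large, whence
\[
\qexp\!\left[e^{-\lambda G(\lfloor ns\rfloor, \lfloor nt\rfloor)}\right]\ \ge\ e^{-\lambda n(g(s,t)+\delta)}\,\qmeas\!\left(G(\lfloor ns\rfloor, \lfloor nt\rfloor)\le n(g(s,t)+\delta)\right)\ \ge\ \tfrac12\,e^{-\lambda n(g(s,t)+\delta)}.
\]
Taking $n^{-1}\log$ and $\liminf_n$ gives $\liminf_n n^{-1}\log\qexp[e^{-\lambda G(\lfloor ns\rfloor, \lfloor nt\rfloor)}]\ge -\lambda(g(s,t)+\delta)$, and letting $\delta\downarrow 0$ along rationals yields $\liminf_n \ge -\lambda g(s,t)$.

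For the upper bound, fix rational $\delta\in(0,g(s,t))$ and split the expectation at the level $n(g(s,t)-\delta)$. On $\{G(\lfloor ns\rfloor,\lfloor nt\rfloor)>n(g(s,t)-\delta)\}$ we have $e^{-\lambda G}< e^{-\lambda n(g(s,t)-\delta)}$, while on the complement $e^{-\lambda G}\le 1$, so
\[
\qexp\!\left[e^{-\lambda G(\lfloor ns\rfloor, \lfloor nt\rfloor)}\right]\ \le\ e^{-\lambda n(g(s,t)-\delta)}+\qmeas\!\left(G(\lfloor ns\rfloor, \lfloor nt\rfloor)\le n(g(s,t)-\delta)\right).
\]
Applying Lemma~\ref{quenchedlb} with $r=g(s,t)-\delta<g(s,t)$, the quantity $-n^{-1}\log\qmeas(G(\lfloor ns\rfloor, \lfloor nt\rfloor)\le n(g(s,t)-\delta))$ tends to $+\infty$, hence exceeds $\lambda g(s,t)$ for $n$ large, giving $\qmeas(\cdots)\le e^{-\lambda n g(s,t)}\le e^{-\lambda n(g(s,t)-\delta)}$. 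Therefore $\qexp[e^{-\lambda G(\lfloor ns\rfloor, \lfloor nt\rfloor)}]\le 2\,e^{-\lambda n(g(s,t)-\delta)}$ for $n$ large, so $\limsup_n n^{-1}\log\qexp[e^{-\lambda G(\lfloor ns\rfloor, \lfloor nt\rfloor)}]\le -\lambda(g(s,t)-\delta)$; letting $\delta\downarrow 0$ gives $\limsup_n\le -\lambda g(s,t)$. Combining the two bounds completes the proof.

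I do not anticipate a genuine obstacle here; the argument is a routine interpolation between the trivial bound $e^{-\lambda G}\le 1$ and the left-tail estimate of Lemma~\ref{quenchedlb}. The only point requiring mild care is the order of quantifiers, which is resolved by the fact that Lemma~\ref{quenchedlb} and the law of large numbers for $G$ each hold on a single $\mu$-full-measure event for all relevant parameters, and by the uniformity in $\lambda$ of the $\delta$-estimates above.
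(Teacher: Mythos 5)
Your proof is correct and is exactly the argument the paper intends: the corollary is stated without proof as an immediate consequence of the shape theorem (for the lower bound, via $\qmeas(G(\lfloor ns\rfloor,\lfloor nt\rfloor)\le n(g(s,t)+\delta))\to 1$) and Lemma \ref{quenchedlb} (for the upper bound, splitting at $n(g(s,t)-\delta)$), which is precisely your sandwich argument, including the correct handling of the almost-sure event uniformly in the parameters.
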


Essentially the same argument as in Lemma \ref{quenchedlb} restricted to a single diagonal $D_0$ (so that the last passage times  on $B_0^j$ are i.i.d. under $\ameas$) shows that for $r \in (0, g(s,t))$, we have
\begin{align*}
\liminf_{n \to \infty} -n^{-1}\log \ameas \left(n^{-1} G(\lf ns \rf, \lf nt \rf) \leq r \right) > 0.
\end{align*} 
To show that $n$ is the correct rate for certain left tail large deviations, we need to show that the corresponding limsup is finite for some $r \in (0, g(s,t))$. We begin by considering the natural mechanism for these deviations, which we stated previously in Section \ref{S3} as Lemma \ref{annealedub}.

\begin{proof}[Proof of Lemma \ref{annealedub}]
We may assume without loss of generality that $\{\nu_1 \in \sM^\alpha, \nu_2 \in \sM^\beta : g_{\nu_1, \nu_2}(s,t)  \in (x,y)\} \neq \emptyset$ since the right hand side is infinite otherwise. Fix a pair $\nu_1,\nu_2$ from this set and introduce the notation
\begin{align*}
A_n &= \{n^{-1}G(\lf ns \rf, \lf nt \rf) \in (x,y)\}, \qquad \frac{d\nu_1}{d\alpha} (a) = \varphi(a), \qquad \frac{d\nu_2}{d\beta}(b) = \psi(b).
\end{align*}
Since $A_n$ is measurable with respect to $\sigma\left(W(i,j) : 1 \leq i \leq \lf ns \rf, 1 \leq j \leq \lf nt \rf\right)$, we see that
\begin{align*}
\ameas_{\alpha,\beta}(A_n) = \E_{\alpha, \beta}\left[\qmeas(A_n)\right] &\geq \E_{\alpha, \beta}\left[\qmeas(A_n) \prod_{i=1}^{\lf ns \rf}1_{\{\varphi(a_i) > 0\}}\prod_{j=1}^{\lf nt \rf}1_{\{\psi(b_j) > 0\}}\right] \\
&=  \E_{\nu_1, \nu_2}\left[\qmeas(A_n) \prod_{i=1}^{\lf ns \rf}\varphi(a_i)^{-1}\prod_{j=1}^{\lf nt \rf}\psi(b_j)^{-1}\right].
\end{align*}
Taking logs and applying Jensen's inequality shows that
\begin{align*}
&- \frac{1}{n}\log \ameas_{\alpha,\beta}(A_n) \leq - \frac{1}{n}\log \E_{\nu_1, \nu_2}\left[\qmeas(A_n) \prod_{i=1}^{\lf ns \rf}\varphi(a_i)^{-1}\prod_{j=1}^{\lf nt \rf}\psi(b_j)^{-1}\right] \\
&\quad \leq \frac{1}{n \ameas_{\nu_1, \nu_2}(A_n)} \E_{\nu_1, \nu_2}\left[\qmeas(A_n) \left(\sum_{i=1}^{\lf ns \rf} \log \varphi(a_i) + \sum_{j=1}^{\lf nt \rf} \log \psi(b_j) \right)\right] - \frac{1}{n} \log \ameas_{\nu_1, \nu_2}(A_n).
\end{align*}
Note that for any measures $\nu_1,\nu_2$, we have $g_{\nu_1, \nu_2}(s,t) > 0$, so we have not divided by zero above. The last term tends to zero because $\ameas_{\nu_1, \nu_2}(A_n) \to 1$ as $n \to \infty$. For the remaining term, we note that
\begin{align*}
&\E_{\nu_1, \nu_2}\left[\qmeas(A_n) \left(\sum_{i=1}^{\lf ns \rf} \log \varphi(a_i) + \sum_{j=1}^{\lf nt \rf}\log \psi(b_j) \right)\right] = \E_{\nu_1, \nu_2}\left[\left(\sum_{i=1}^{\lf ns \rf} \log \varphi(a_i) + \sum_{j=1}^{\lf nt \rf}\log \psi(b_j) \right)\right] \\
&\qquad\qquad\qquad\qquad\qquad-\E_{\nu_1, \nu_2}\left[\qmeas(A_n^c)\log\left( \prod_{i=1}^{\lf ns \rf}\varphi(a_i) \prod_{j=1}^{\lf nt \rf}\psi(b_j)\right) \right] \\
&\qquad\qquad\qquad\qquad\qquad= \lf ns \rf \En (\nu_1 | \alpha) + \lf nt \rf \En (\nu_2 | \beta)\\
&\qquad\qquad\qquad\qquad\qquad-\E_{\alpha, \beta}\left[\qmeas(A_n^c)\prod_{i=1}^{\lf ns \rf}\varphi(a_i) \prod_{j=1}^{\lf nt \rf}\psi(b_j)\log\left( \prod_{i=1}^{\lf ns \rf}\varphi(a_i) \prod_{j=1}^{\lf nt \rf}\psi(b_j)\right) \right]
\end{align*}
But $x \log x \geq -\frac{1}{e}$ and $\qmeas(A_n^c) \in [0,1]$ so the last term is bounded above by a constant. Dividing by $n$ and taking $\limsup_{n \to \infty}$, then optimizing over $\nu_1,\nu_2$ gives the result.
\end{proof}

To show that the annealed model has non-trivial rate $n$ large deviations to the left of the shape function, it suffices to show that there exists $\nu_1 \in \sM^\alpha$ with $g_{\nu_1,\beta}(s,t) < g_{\alpha,\beta}(s,t)$. The next lemma gives mild conditions under which this is the case.

\begin{lem} \label{sufconlower}
Suppose that $\alpha$ is not degenerate and $\E^\alpha[a \log a] < \infty$. Then there exists $\nu_1$ with $\En (\nu_1 | \alpha) < \infty$ and
$g_{\nu_1,\beta}(s,t) < g_{\alpha,\beta}(s,t)$.
\end{lem}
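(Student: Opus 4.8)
The plan is to produce $\nu_1$ explicitly as the size-biasing of $\alpha$, which is precisely the tilt whose entropy cost is governed by $\E^\alpha[a\log a]$. Concretely, set $\frac{d\nu_1}{d\alpha}(a)\simeq a$, i.e. $\nu_1(da)=[\E^\alpha a]^{-1}\,a\,\alpha(da)$. This is well defined because $\E^\alpha[a]<\infty$, and a one-line computation gives
\begin{align*}
\En(\nu_1\,|\,\alpha)=\frac{\E^\alpha[a\log a]}{\E^\alpha[a]}-\log\E^\alpha[a],
\end{align*}
which is finite by hypothesis (it is automatically bounded below since $x\log x\ge -e^{-1}$). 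Also $\nu_1\sim\alpha$, so $\ubar{\nu_1}=\ubar{\alpha}$.

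Next I would compare the two shape functions by feeding the $(\alpha,\beta)$-optimizer of (\ref{E28}) into the variational problem for $g_{\nu_1,\beta}$. Let $\zeta$ be the unique minimizer in (\ref{E28}); since $g_{\alpha,\beta}(s,t)<\infty$, both $\E^\alpha[(a+\zeta)^{-1}]$ and $\E^\beta[(b-\zeta)^{-1}]$ are finite, and finiteness of the first of these forces $a+\zeta>0$ $\alpha$-a.s. Because $\ubar{\nu_1}=\ubar{\alpha}$, the value $\zeta$ is admissible in the variational formula for $g_{\nu_1,\beta}(s,t)$, whence
\begin{align*}
g_{\nu_1,\beta}(s,t)\le s\,\E^{\nu_1}\!\left[\frac{1}{a+\zeta}\right]+t\,\E^\beta\!\left[\frac{1}{b-\zeta}\right].
\end{align*}
Since $g_{\alpha,\beta}(s,t)=s\,\E^\alpha[(a+\zeta)^{-1}]+t\,\E^\beta[(b-\zeta)^{-1}]$, it therefore suffices to prove the strict inequality $\E^{\nu_1}[(a+\zeta)^{-1}]<\E^\alpha[(a+\zeta)^{-1}]$.

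By the definition of $\nu_1$ this is exactly $\E^\alpha[a(a+\zeta)^{-1}]<\E^\alpha[a]\,\E^\alpha[(a+\zeta)^{-1}]$, i.e. that the $\alpha$-covariance of the increasing map $a\mapsto a$ and the decreasing map $a\mapsto(a+\zeta)^{-1}$ is strictly negative. I would obtain this from the elementary correlation inequality for oppositely monotone functions: for i.i.d. $X,Y\sim\alpha$ one has $(X-Y)\big((X+\zeta)^{-1}-(Y+\zeta)^{-1}\big)\le 0$, and taking expectations yields $\le$ in the claimed inequality. All expectations involved are finite, using $a(a+\zeta)^{-1}=1-\zeta(a+\zeta)^{-1}$. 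Strictness comes from the non-degeneracy of $\alpha$: one can choose two disjoint Borel sets of positive $\alpha$-measure separated by a point, and on the corresponding product event (of positive probability) the integrand above is strictly negative. Combining the displays gives $g_{\nu_1,\beta}(s,t)<g_{\alpha,\beta}(s,t)$.

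I do not expect a substantive obstacle here; the only points that need a little care are bookkeeping ones: checking $\ubar{\nu_1}=\ubar{\alpha}$ so that $\zeta$ remains an admissible test point for the perturbed minimization, handling the boundary case $\zeta=-\ubar{\alpha}$ (where finiteness of $g_{\alpha,\beta}(s,t)$ already forces $\E^\alpha[(a-\ubar{\alpha})^{-1}]<\infty$, hence $a>\ubar{\alpha}$ $\alpha$-a.s., so $(a+\zeta)^{-1}$ is finite and strictly decreasing), and extracting the strict inequality in the correlation bound from non-degeneracy.
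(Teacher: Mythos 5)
Your proposal is correct and follows essentially the same route as the paper's proof: the size-biased tilt $\frac{d\nu_1}{d\alpha}(a)\simeq a$ (entropy finite by the $\E^\alpha[a\log a]<\infty$ hypothesis), the minimizer $\zeta$ of (\ref{E28}) used as a test point in the perturbed variational problem, and a reduction to the strict inequality $\E^\alpha\left[a(a+\zeta)^{-1}\right]<\E^\alpha[a]\,\E^\alpha\left[(a+\zeta)^{-1}\right]$. The only difference is the final elementary step: the paper gets this from Cauchy--Schwarz applied to $\sqrt{a+\zeta}\cdot(a+\zeta)^{-1/2}$ (strict because $\alpha$ is non-degenerate), while you use the correlation inequality for oppositely monotone functions; both work equally well.
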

\begin{proof}
Define $\nu_1$ by $\frac{d\nu_1}{d\alpha}(a) \simeq a.$ Note that $\En (\nu_1 | \alpha)< \infty$ by hypothesis. Let $\zeta \in [-\ubar{\alpha}, \ubar{\beta}]$ be such that $g_{\alpha,\beta}(s,t) = s \E\left[(a+\zeta)^{-1}\right] + t \E\left[(b-\zeta)^{-1}\right]$. Because $\alpha \neq \delta_c$ for any $c$, the Cauchy-Schwarz inequality gives $1= \E\left[\sqrt{a+\zeta} \sqrt{a+\zeta}^{-1}\right]^2 < \E[a+\zeta] \E\left[(a+\zeta)^{-1}\right].$ Rearranging implies that $\E\left[a(a+\zeta)^{-1}\right] < \E[a]\E\left[(a+\zeta)^{-1}\right].$ It then follows that
\begin{align*}
g_{\nu_1,\beta}(s,t) &\leq s \E[a]^{-1}\E\left[\frac{a}{a+\zeta}\right] + t \E\left[\frac{1}{b-\zeta}\right] <s \E\left[\frac{1}{a+\zeta}\right] + t \E\left[\frac{1}{b-\zeta}\right] = g_{\alpha,\beta}(s,t). \qedhere
\end{align*}
\end{proof}

\noindent We expect that the moment condition in the previous lemma is unnecessary. 

\section{Large deviation principle}
\label{S7}
We prove Theorem \ref{thm:Jcd} by working with Legendre-Fenchel transforms and appealing to convex duality.
\begin{lem}
For all $s,t >0$,
\begin{align*}
\bfJ_{s,t}^{\star}(\lambda) = \begin{cases}
\bfL_{s,t}(\lambda) & \lambda \geq 0 \\
\infty & \lambda < 0
\end{cases}, \qquad 
\bbJ_{s,t}^{\star}(\lambda) = \begin{cases}
\bbL_{s,t}(\lambda) & \lambda \geq 0 \\
\infty & \lambda < 0
\end{cases}.
\end{align*}
\end{lem}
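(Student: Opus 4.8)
The plan is to prove the quenched identity and to note that the annealed one follows verbatim with $\qmeas,\qexp$ replaced by $\ameas,\aexp$ and the annealed halves of Theorem~\ref{T1} and Proposition~\ref{P3} in place of the quenched ones; throughout write $G_n=G(\lf ns\rf,\lf nt\rf)$. The non-positive cases are immediate: since $\bfJ_{s,t}$ vanishes on $(-\infty,g(s,t))$, for $\lambda<0$ we get $\bfJ_{s,t}^{\star}(\lambda)\ge\sup_{r<g(s,t)}r\lambda=+\infty$, and for $\lambda=0$ we get $\bfJ_{s,t}^{\star}(0)=-\inf_r\bfJ_{s,t}(r)=0=\bfL_{s,t}(0)$. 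Now fix $\lambda>0$. The easy inequality $\bfJ_{s,t}^{\star}(\lambda)\le\bfL_{s,t}(\lambda)$ comes from the exponential Chebyshev bound $\qmeas(G_n\ge nr)\le e^{-\lambda nr}\qexp[e^{\lambda G_n}]$: taking $-\tfrac1n\log$ and $n\to\infty$ (Propositions~\ref{P3} and~\ref{P1}) gives $\bfJ_{s,t}(r)\ge\lambda r-\bfL_{s,t}(\lambda)$ for all $r$, hence $\lambda r-\bfJ_{s,t}(r)\le\bfL_{s,t}(\lambda)$, and one takes the supremum over $r$ (this is vacuous when $\bfL_{s,t}(\lambda)=\infty$).

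For the reverse inequality I would first record two auxiliary estimates. A crude tail bound: for $0\le\theta<\ubar{\alpha}+\ubar{\beta}$, since $a_i+b_j\ge\ubar{\alpha}+\ubar{\beta}$ a.s.\ we have $\qexp[e^{\theta W(i,j)}]=\tfrac{a_i+b_j}{a_i+b_j-\theta}\le\tfrac{\ubar{\alpha}+\ubar{\beta}}{\ubar{\alpha}+\ubar{\beta}-\theta}$, so by independence a fixed directed path (of length $\le n(s+t)$) satisfies $\qexp[e^{\theta\sum_\pi W}]\le\bigl(\tfrac{\ubar{\alpha}+\ubar{\beta}}{\ubar{\alpha}+\ubar{\beta}-\theta}\bigr)^{n(s+t)}$, and a union bound over the at most $2^{n(s+t)}$ paths together with Markov's inequality give $\qmeas(G_n>nr)\le C_\theta^{\,n+o(n)}e^{-\theta nr}$ with $C_\theta=2^{s+t}\bigl(\tfrac{\ubar{\alpha}+\ubar{\beta}}{\ubar{\alpha}+\ubar{\beta}-\theta}\bigr)^{s+t}$ (the same holds under $\ameas$ after conditioning on $(\bfa,\bfb)$). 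And a linear upper bound on $\bfJ$: because $\inf_i a_i=\ubar{\alpha}$ and $\inf_j b_j=\ubar{\beta}$ a.s., for each $\epsilon>0$ there are $i,j$ with $a_i+b_j<\ubar{\alpha}+\ubar{\beta}+2\epsilon$, and since $G_n\ge W(i,j)$ once $i\le\lf ns\rf,\ j\le\lf nt\rf$, we get $\qmeas(G_n\ge nr)\ge e^{-(a_i+b_j)nr}$, hence $\bfJ_{s,t}(r)\le(\ubar{\alpha}+\ubar{\beta}+2\epsilon)r$ and, as $\epsilon\downarrow0$, $\bfJ_{s,t}(r)\le(\ubar{\alpha}+\ubar{\beta})r$ (for $\ameas$, estimate the tail of $W(i,j)$ using $\alpha([\ubar{\alpha},\ubar{\alpha}+\epsilon))\,\beta([\ubar{\beta},\ubar{\beta}+\epsilon))>0$). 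Given these, for $\lambda>\ubar{\alpha}+\ubar{\beta}$ one has $\bfL_{s,t}(\lambda)=\infty$ by Theorem~\ref{T1}, while the linear bound gives $\bfJ_{s,t}^{\star}(\lambda)\ge\sup_{r>0}(\lambda-\ubar{\alpha}-\ubar{\beta})r=\infty$, so the identity holds. For $0<\lambda<\ubar{\alpha}+\ubar{\beta}$ I would run a Varadhan-type truncation: from $\qexp[e^{\lambda G_n}]=1+n\lambda\int_0^\infty e^{n\lambda r}\qmeas(G_n>nr)\,dr$, split the integral at a level $R$; on $(R,\infty)$ the crude bound with $\theta\in(\lambda,\ubar{\alpha}+\ubar{\beta})$ bounds the contribution by $C_\theta^{\,n+o(n)}e^{n(\lambda-\theta)R}$, whose exponential rate tends to $-\infty$ as $R\to\infty$; on $[0,R]$, cover by finitely many subintervals of length $\le\delta$ and use $\qmeas(G_n>nr)\le\qmeas(G_n\ge nr_k)\le e^{-n(\bfJ_{s,t}(r_k)-\epsilon)}$ (for $n$ large, one $r_k$ at a time, by Proposition~\ref{P3}) together with $e^{n\lambda r}\le e^{n\lambda(r_k+\delta)}$, so that part has exponential rate at most $\bfJ_{s,t}^{\star}(\lambda)+\lambda\delta+\epsilon$; sending $n\to\infty$, then $R\to\infty$, then $\delta,\epsilon\downarrow0$ yields $\bfL_{s,t}(\lambda)\le\bfJ_{s,t}^{\star}(\lambda)$.

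It remains to treat the boundary value $\lambda=\ubar{\alpha}+\ubar{\beta}$, where the truncation degenerates since one can no longer take $\theta>\lambda$. Here I would use that $\bfJ_{s,t}^{\star}$ is nondecreasing on $[0,\infty)$ (for $\lambda\ge0$ only $r\ge g(s,t)>0$ contribute to the supremum, and $r\lambda-\bfJ_{s,t}(r)$ increases in $\lambda$ there), so $\bfL_{s,t}(\lambda)=\bfJ_{s,t}^{\star}(\lambda)\le\bfJ_{s,t}^{\star}(\ubar{\alpha}+\ubar{\beta})$ for every $\lambda<\ubar{\alpha}+\ubar{\beta}$; letting $\lambda\uparrow\ubar{\alpha}+\ubar{\beta}$ and invoking left-continuity of $\bfL_{s,t}$ at $\ubar{\alpha}+\ubar{\beta}$ (from monotone convergence in the variational formula of Theorem~\ref{T1} and $\zs(\lambda)\to-\ubar{\alpha}$ in Lemma~\ref{L3}, which also covers the case $\bfL_{s,t}(\ubar{\alpha}+\ubar{\beta})=\infty$; cf.\ Corollary~\ref{C1}) gives $\bfL_{s,t}(\ubar{\alpha}+\ubar{\beta})\le\bfJ_{s,t}^{\star}(\ubar{\alpha}+\ubar{\beta})$, and the easy direction gives equality. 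I expect the main difficulty to be precisely the reverse inequality for $0<\lambda<\ubar{\alpha}+\ubar{\beta}$: the tail behavior $\qmeas(G_n>nr)\approx e^{-n\bfJ_{s,t}(r)}$ is known only pointwise in $r$, so the uniform crude bound is what makes the moment-integral truncation work, and its constraint $\theta<\ubar{\alpha}+\ubar{\beta}$ is exactly what forces the separate, softer argument at the endpoint.
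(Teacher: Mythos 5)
Your proof is correct, and its overall skeleton (exponential Chebyshev for $\bfJ_{s,t}^{\star}\le\bfL_{s,t}$, a truncation argument for the reverse inequality on $(0,\ubar{\alpha}+\ubar{\beta})$, a limiting/monotonicity argument at the endpoint $\lambda=\ubar{\alpha}+\ubar{\beta}$, and the linear bound $\bfJ_{s,t}(r)\le(\ubar{\alpha}+\ubar{\beta})r\,1_{\{r\ge0\}}$ for $\lambda>\ubar{\alpha}+\ubar{\beta}$) matches the paper's. The one genuinely different ingredient is how you control the contribution of the event $\{n^{-1}G(\lf ns\rf,\lf nt\rf)>R\}$ in the hard direction: the paper bounds $\qexp[e^{\lambda G}1_{\{G>nM\}}]$ by H\"older's inequality with an exponent $p$ chosen so that $p\lambda<\ubar{\alpha}+\ubar{\beta}$, reusing the already-established finiteness of $\bfL_{s,t}(p\lambda)$ together with $\bfJ_{s,t}(M)\to\infty$, whereas you use the layer-cake identity for $\qexp[e^{\lambda G}]$ and a crude uniform-in-$r$ tail bound $\qmeas(G>nr)\le C_\theta^{\,n+o(n)}e^{-\theta nr}$, $\theta\in(\lambda,\ubar{\alpha}+\ubar{\beta})$, obtained from a union bound over the at most $2^{n(s+t)}$ directed paths and the deterministic inequality $a_i+b_j\ge\ubar{\alpha}+\ubar{\beta}$. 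Your device is more elementary and self-contained (it does not lean on Theorem \ref{T1} at a higher exponent, and it transfers to the annealed measure immediately since the bound is uniform over $(\bfa,\bfb)$), at the cost of the path-counting estimate and a slightly more delicate discretization of the integral; the paper's Hölder step is shorter given that the Lyapunov exponents are already computed. Two small points worth tightening if you write this up: your justification of left-continuity of $\bfL_{s,t}$ at $\ubar{\alpha}+\ubar{\beta}$ is most cleanly done by Fatou applied to $F(\zs(\lambda),\lambda)$ using $\zs(\lambda)\to-\ubar{\alpha}$ from Lemma \ref{L3} (the nonnegativity of the integrands is what makes this work, including when $\bfL_{s,t}(\ubar{\alpha}+\ubar{\beta})=\infty$) — note the paper asserts this monotone convergence without proof as well, and in the annealed case you should cite Lemma \ref{L9} rather than Lemma \ref{L3}; and in the compact part of the truncation you should fix the finitely many partition points first and only then take $n$ large, which you do indicate and which is legitimate since Proposition \ref{P3} holds on a single full-measure event simultaneously in $r$.
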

\begin{proof}
We give the proof of the result under $\qmeas$. The proof under $\ameas$ is similar. Recall the regularity properties of $\bfJ_{s,t}(\cdot)$ proven in Proposition \ref{P3} in the appendix. The result for $\lambda < 0$ follows from the observation that $\bfJ_{s,t}(r) = 0$ for $r \leq g(s,t)$. For all $\lambda > 0$, by the exponential Markov inequality we have
\begin{align*}
\frac{1}{n} \log\qmeas\left(G(\lfloor ns \rfloor, \lfloor nt \rfloor) \geq n r\right) &\leq \frac{1}{n}\log \qexp\left[e^{\lambda G(\lfloor ns \rfloor, \lfloor nt \rfloor)}\right] - \lambda r.
\end{align*}
Sending $n \to \infty$ gives $\lambda r - \bfJ_{s,t}(r) \leq \bfL_{s,t}(\lambda)$ and taking $\sup_{r \in \mathbb{R}}$ implies $\bfJ_{s,t}^{\star}(\lambda)\leq \bfL_{s,t}(\lambda)$. For the reverse inequality, we next consider the case $\lambda \in (0, \ubar{\alpha} + \ubar{\beta})$.  Fix $M >0$ and let $\{x_i\}_{i=0}^{K}$ be a partition of $[0,M]$. We observe that
\begin{align*}
\qexp\left[e^{\lambda G(\lfloor ns \rfloor, \lfloor nt\rfloor)}\right] &= \sum_{i=1}^{K}\qexp\left[e^{\lambda G(\lfloor ns \rfloor, \lfloor nt\rfloor)} 1_{(x_{i-1}, x_i]}(n^{-1}G(\lfloor ns \rfloor, \lfloor nt\rfloor))\right]  \\
&+ \qexp\left[e^{\lambda G(\lfloor ns \rfloor, \lfloor nt\rfloor)} 1_{(M, \infty)}(n^{-1}G(\lfloor ns \rfloor, \lfloor nt\rfloor))\right].
\end{align*}
Consequently, we see that
\begin{align*}
\frac{1}{n}\log \qexp \left[e^{\lambda G(\lfloor ns \rfloor, \lfloor nt \rfloor)}\right] &\leq \max\bigg\{\max_{0 \leq i \leq K}\{ \lambda x_i + \frac{1}{n} \log \qmeas\left(n^{-1}G(\lfloor ns \rfloor, \lfloor nt \rfloor) \geq x_{i-1}\right)\},\\
&\qquad\frac{1}{n}\qexp\left[e^{\lambda G(\lfloor ns \rfloor, \lfloor nt\rfloor)} 1_{(M, \infty)}(n^{-1}G(\lfloor ns \rfloor, \lfloor nt\rfloor))\right]\bigg\} + \frac{K+1}{n}
\end{align*}
Take $\limsup_{n \to \infty}$ then $K \to \infty$. Using continuity of $r \mapsto \bfJ_{s,t}(r)$, we see that
\begin{align*}
\bfL_{s,t}(\lambda) \leq  \max_{0\leq r \leq M}\{\lambda r - \bfJ_{s,t}(r)\} \vee \limsup_{n \to \infty}\frac{1}{n} \log \qexp\left[e^{\lambda G(\lfloor ns \rfloor, \lfloor nt\rfloor)} 1_{(M, \infty)}(n^{-1}G(\lfloor ns \rfloor, \lfloor nt\rfloor))\right].
\end{align*}
Let $p,q > 1$ be such that $p^{-1} + q^{-1} = 1$ and $p \lambda < \ubar{\alpha} + \ubar{\beta}$. Then
\begin{align*}
\frac{1}{n} \log \qexp\left[e^{\lambda G(\lfloor ns \rfloor, \lfloor nt\rfloor)} 1_{(M, \infty)}(n^{-1}G(\lfloor ns \rfloor, \lfloor nt\rfloor))\right] &\leq \frac{1}{pn}\log \qexp\left[e^{\lambda p G(\lfloor ns \rfloor, \lfloor nt \rfloor)}\right] \\
&+ \frac{1}{qn} \log \qmeas\left(n^{-1} G(\lfloor ns \rfloor, \lfloor nt \rfloor \geq M\right).
\end{align*}
From this, we see that there exist deterministic constants $C_1, C_2$ such that
\begin{align*}
\limsup_{n \to \infty} \frac{1}{n} \log \qexp\left[e^{\lambda G(\lfloor ns \rfloor, \lfloor nt\rfloor)} 1_{(M, \infty)}(n^{-1}G(\lfloor ns \rfloor, \lfloor nt\rfloor))\right] \leq C_1 - C_2 \bfJ_{s,t}(M).
\end{align*}
Recall that $\lambda r \leq \bfL_{s,t}(\lambda) + \bfJ_{s,t}(r)$, so that as $M \to \infty$, $\bfJ_{s,t}(M) \to \infty$. Since $\max_{r \leq M}\{\lambda r - \bfJ_{s,t}(r)\} \leq \bfJ_{s,t}^{\star}(\lambda)$, it follows that we have $\bfL_{s,t}(\lambda) \leq \bfJ_{s,t}^{\star}(\lambda)$.

Next, we turn to the case $\lambda = \ubar{\alpha} + \ubar{\beta}$. We observe that as $\lambda \uparrow \ubar{\alpha} + \ubar{\beta}$,  $\bfL_{s,t}(\lambda) \uparrow \bfL_{s,t}(\ubar{\alpha} + \ubar{\beta}).$ Suppose that $\bfL_{s,t}(r) < \infty$. Fix $\epsilon > 0$ and take $\lambda < \ubar{\alpha} + \ubar{\beta}$ such that $\sup_{r \in \bbR}\{\lambda r - \bfJ_{s,t}(r)\} = \bfL_{s,t}(\lambda) \geq \bfL_{s,t}(\ubar{\alpha} + \ubar{\beta}) - 2\epsilon$.
Then there exists $r > 0$ so that $\lambda r - \bfJ_{s,t}(r) \geq \bfL_{s,t}(\ubar{\alpha} + \ubar{\beta}) - \epsilon$. Since $(\ubar{\alpha} + \ubar{\beta})r > \lambda r$, it follows that  $\bfJ_{s,t}^{\star}(\ubar{\alpha} + \ubar{\beta}) \geq \bfL_{s,t}(\ubar{\alpha} + \ubar{\beta}) - \epsilon$. The case $\bfL_{s,t}(\ubar{\alpha} + \ubar{\beta}) = \infty$ is similar. 

Finally, we consider the case $\lambda > \ubar{\alpha} + \ubar{\beta}$, where $\bfL_{s,t}(\lambda) = \infty$. For each $(i,j)$, we eventually have $G(\lf ns \rf, \lf nt \rf) \geq W(i,j)$. This implies that for all $(i,j)$, $\bfJ_{s,t}(r) \leq (a_i + b_j) r 1_{\{r \geq 0\}}$ and therefore $\mu$ almost surely, $\bfJ_{s,t}(r) \leq (\ubar{\alpha} + \ubar{\beta}) r1_{\{r \geq 0\}}$. Taking Legendre-Fenchel transforms of this inequality shows that $\bfJ_{s,t}^{\star}(\lambda) = \infty$.
\end{proof}
\begin{proof}[Proof of Theorem \ref{thm:Jcd}] Proposition \ref{P3} shows that $r \mapsto \bfJ_{s,t}(r)$ and $r \mapsto \bbJ_{s,t}(r)$ are real valued convex functions on $\bbR$. The result follows from taking Legendre-Fenchel transforms of the expressions in the previous lemma \cite[Theorem 12.2]{Rockafellar}.
\end{proof}

\begin{proof}[Proof of Theorem \ref{T0}]
Fix an open set $O \subset \bbR$ 
\begin{enumerate}
\item If $O \subset (-\infty, g(s,t))$ then there is nothing to prove by Lemma \ref{quenchedlb}.
\item If $g(s,t) \in O$, then 
\begin{align*}
\limsup_{n \to \infty} - n^{-1}\log \qmeas\left(n^{-1}G(\lfloor ns \rfloor, \lfloor nt \rfloor) \in O \right) = 0 = \inf_{r \in O} I_{s,t}(r)
\end{align*}
\item If $O \cap (g(s,t), \infty) \neq \emptyset$, then $O \cap (g(s,t), \infty)$ contains an interval $(r_0,r_1)$. Note that
\begin{align*}
\qmeas\left(n^{-1}G(\lfloor ns \rfloor, \lfloor nt \rfloor) \in O\right) &\geq \qmeas\left(G(\lfloor ns \rfloor, \lfloor nt \rfloor) \in (r_0,r_1)\right)\\
&= \qmeas\left(G(\lfloor ns \rfloor, \lfloor nt \rfloor) \geq r_0\right) - \qmeas\left(G(\lfloor ns \rfloor, \lfloor nt \rfloor) \geq r_1\right)
\end{align*}
Lemma \ref{L1} shows that $\bfJ_{s,t}(r)$ is strictly increasing for $r > g(s,t)$, which implies that
\begin{align*}
\limsup_{n \to \infty} - n^{-1} \log \qmeas\left(n^{-1}G(\lfloor ns \rfloor, \lfloor nt \rfloor) \in O\right) \leq \bfJ_{s,t}(r_0).
\end{align*}
Let $r_n \in O \cap (g(s,t),\infty)$ be a sequence with $r_n \downarrow r_\infty = \inf\{x : x \in O \cap (g(s,t),\infty)\}$. Then because $\bfJ_{s,t}(r)$ is continuous and non-decreasing, we see that
\begin{align*}
\limsup_{n \to \infty} - n^{-1} \log \qmeas\left(n^{-1}G(\lfloor ns \rfloor, \lfloor nt \rfloor) \in O\right) \leq \bfJ_{s,t}(r_\infty) = \inf_{r \in O \cap (g(s,t),\infty)} \bfI_{s,t}(r) = \inf_{r \in O} \bfI_{s,t}(r).
\end{align*}
\end{enumerate}
The upper bound follows from the regularity of $\bfJ_{s,t}$, Theorem \ref{thm:Jcd} and Lemma \ref{quenchedlb}.
\end{proof}

\section{Relative entropy and the rate functions} \label{S8}
We now turn to the proof of Theorem \ref{thm:annealedent}. Our argument proving this result is purely convex analytic and does not show the probabilistic interpretation mentioned before the statement of the theorem. We begin with a technical lemma.
\begin{lem}
For $r > 0$, the map $(\alpha,\beta) \mapsto \bfI_{s,t}^{\alpha,\beta}(r)$ is convex on $\sM_1(\bbR_+)^2$.
\end{lem}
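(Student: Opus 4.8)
The plan is to realize $(\alpha,\beta)\mapsto\bfI_{s,t}^{\alpha,\beta}(r)$ as a pointwise supremum of functions that are affine in $(\alpha,\beta)$; since affine functions are convex and a pointwise supremum of convex functions is convex, the assertion follows at once. The natural candidate comes from the variational formula \eqref{E26}: for $r\ge g_{\alpha,\beta}(s,t)$ one has
\begin{align*}
\bfI_{s,t}^{\alpha,\beta}(r)=\bfJ_{s,t}^{\alpha,\beta}(r) &= \sup_{\substack{0<\lambda\le\ubar{\alpha}+\ubar{\beta}\\ -\ubar{\alpha}\le z\le\ubar{\beta}-\lambda}}\Phi_{\lambda,z}(\alpha,\beta),\\
\Phi_{\lambda,z}(\alpha,\beta) &:= r\lambda-s\,\E^{\alpha}\!\left[\log\frac{a+z+\lambda}{a+z}\right]-t\,\E^{\beta}\!\left[\log\frac{b-z}{b-z-\lambda}\right].
\end{align*}
For each fixed $(\lambda,z)$ the functional $\Phi_{\lambda,z}$ is affine in $(\alpha,\beta)$, because $\mu\mapsto\E^{\mu}[\varphi]$ is linear for any fixed integrand $\varphi$. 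The only genuine subtlety is that the set of admissible $(\lambda,z)$ itself depends on $(\alpha,\beta)$ through $\ubar{\alpha}$ and $\ubar{\beta}$, so the right-hand side is not yet literally a supremum over a fixed index set.

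To remove this dependence I would extend $\Phi_{\lambda,z}$ to all $\lambda>0$ and all $z\in\bbR$ via the convention $\log\frac{x+\lambda}{x}=+\infty$ for $x\le 0$; then $\E^{\alpha}[\log\frac{a+z+\lambda}{a+z}]$ and $\E^{\beta}[\log\frac{b-z}{b-z-\lambda}]$ take values in $[0,\infty]$, the functional $\Phi_{\lambda,z}$ remains affine in $(\alpha,\beta)$ under the usual $\pm\infty$ conventions, and I claim $\bfI_{s,t}^{\alpha,\beta}(r)=\sup_{\lambda>0,\,z\in\bbR}\Phi_{\lambda,z}(\alpha,\beta)$ whenever $r\ge g_{\alpha,\beta}(s,t)$. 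Indeed, if $z<-\ubar{\alpha}$ then $\alpha(\{a+z\le0\})>0$ because $\ubar{\alpha}$ is an essential infimum, so the $\alpha$-integral is $+\infty$ and $\Phi_{\lambda,z}(\alpha,\beta)=-\infty$; symmetrically $\Phi_{\lambda,z}(\alpha,\beta)=-\infty$ when $z>\ubar{\beta}-\lambda$; and if $\lambda>\ubar{\alpha}+\ubar{\beta}$ then every $z$ violates one of these two conditions. Hence the newly admitted pairs contribute only $-\infty$ and the supremum over the fixed family agrees with the constrained supremum in \eqref{E26}.

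Granting this representation, each $\Phi_{\lambda,z}$ is affine, hence convex, on $\sM_1(\bbR_+)^2$, so $(\alpha,\beta)\mapsto\sup_{\lambda,z}\Phi_{\lambda,z}(\alpha,\beta)$ is convex as a pointwise supremum of convex functions. For the range $r<g_{\alpha,\beta}(s,t)$, where $\bfI_{s,t}^{\alpha,\beta}(r)=\infty$ by \eqref{Istdef}, the convexity inequality along a segment is automatic whenever an endpoint carries the value $\infty$; the remaining configuration I would treat by combining the supremum representation above with the infimum representation \eqref{E28} of $g_{\alpha,\beta}(s,t)$, which is likewise built from functionals affine in $(\alpha,\beta)$, to reconcile the two cases.

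I expect the main obstacle to be exactly the middle step: verifying that enlarging the constraint set $\{0<\lambda\le\ubar{\alpha}+\ubar{\beta},\ -\ubar{\alpha}\le z\le\ubar{\beta}-\lambda\}$ to the fixed family $\{\lambda>0,\ z\in\bbR\}$ introduces only the value $-\infty$, and that the $\pm\infty$ bookkeeping inside $\Phi_{\lambda,z}$ is internally consistent (in particular at the corner $z=-\ubar{\alpha}$, $\lambda=\ubar{\alpha}+\ubar{\beta}$ when the essential infima are attained). Once the fixed-family supremum representation is in place, the convexity conclusion is purely formal.
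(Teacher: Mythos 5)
Your affine-supremum mechanism is sound, and it is essentially the computation in the paper's own proof: there one takes a mixture $\alpha^\delta=\delta\alpha_1+(1-\delta)\alpha_2$, $\beta^\delta=\delta\beta_1+(1-\delta)\beta_2$, uses $\ubar{\alpha}^\delta=\ubar{\alpha}_1\wedge\ubar{\alpha}_2$ and $\ubar{\beta}^\delta=\ubar{\beta}_1\wedge\ubar{\beta}_2$ to enlarge the constraint set separately for each component and then splits the supremum by linearity, whereas you fix the index family once and for all by declaring $\Phi_{\lambda,z}=-\infty$ off the admissible region; these are equivalent bookkeepings, and the $\pm\infty$ conventions you flag as the main obstacle do work out because the integrands are nonnegative. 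What this argument establishes is convexity of the supremum functional, i.e.\ of $(\alpha,\beta)\mapsto\bfJ_{s,t}^{\alpha,\beta}(r)$ (the paper's $F$).

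The genuine gap is the step you defer at the end. Your representation agrees with $\bfI_{s,t}^{\alpha,\beta}(r)$ only on $\{(\alpha,\beta):r\ge g_{\alpha,\beta}(s,t)\}$: when $r<g_{\alpha,\beta}(s,t)$, every admissible pair satisfies $\Phi_{\lambda,z}\le(r-g_{\alpha,\beta}(s,t))\lambda<0$ (the subtracted term is convex in $\lambda$ with slope $g_z(s,t)\ge g_{\alpha,\beta}(s,t)$ at $\lambda=0$), so the fixed-family supremum is at most $0$, while $\bfI_{s,t}^{\alpha,\beta}(r)=\infty$ by \eqref{Istdef}. The ``remaining configuration'' -- both endpoints of a segment with $r\ge g$ but an intermediate mixture with $r<g$ -- cannot be reconciled by combining \eqref{E26} with \eqref{E28}: since $g$ is \emph{concave} in $(\alpha,\beta)$, its sublevel set $\{g_{\alpha,\beta}(s,t)\le r\}$ need not be convex, and this configuration genuinely occurs. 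For $s=t=1$, the pairs $(\delta_1,\delta_2)$ and $(\delta_2,\delta_1)$ both have $g=4/3$, while the midpoint pair $\bigl(\tfrac12\delta_1+\tfrac12\delta_2,\tfrac12\delta_1+\tfrac12\delta_2\bigr)$ has $g=3/2$; hence for $r\in(4/3,3/2)$ the two endpoint values of $\bfI_{1,1}(r)$ are finite (Proposition \ref{P3} together with \eqref{Istdef}) while the midpoint value is $+\infty$, so the convexity inequality you would need in exactly this configuration fails, and no bookkeeping with \eqref{E26} and \eqref{E28} can produce it. In short, your argument proves convexity of $(\alpha,\beta)\mapsto\bfJ_{s,t}^{\alpha,\beta}(r)$, but the passage to the function extended by $+\infty$ below the shape function -- which is the actual content of the statement -- is precisely where the difficulty sits, and your plan leaves it unresolved.
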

\begin{proof}
Using (\ref{E28}), one can check that $(\alpha,\beta) \mapsto g_{\alpha,\beta}(s,t)$ is concave on $\sM(\bbR_+)^2$. Thus, $\{(\alpha,\beta) : g_{\alpha,\beta}(s,t) \geq r\}$ is convex. Define for $(\alpha,\beta) \in \sM_1(\bbR_+)^2$
\begin{align*}
F(\alpha,\beta) = \sup \limits_{\substack{\lambda \in (0, \ubar{\alpha} + \ubar{\beta}) \\ z \in(-\ubar{\alpha},\ubar{\beta}-\lambda)}} \left\{\lambda r - s \E^{\alpha} \left[\log\left(\frac{a+z+\lambda}{a+z}\right)\right] - t\E^{\beta} \left[\log\left(\frac{b-z}{b-z-\lambda}\right)\right] \right\}.
\end{align*}
Fix $\alpha_1,\alpha_2,\beta_1,\beta_2 \in \sM_1(\bbR_+)$ and $\delta \in (0,1)$. Denote by $\alpha^\delta = \delta \alpha_1 + (1-\delta)\alpha_2$ and by $\beta^\delta = \delta \beta_1 + (1-\delta)\beta_2$. Note that $\ubar{\alpha}^\delta = \ubar{\alpha}_1 \wedge \ubar{\alpha}_2$ and $\ubar{\beta}^\delta = \ubar{\beta}_1 \wedge \ubar{\beta}_2$. Then
\begin{align*}
F(\alpha^\delta,\beta^\delta) &= \sup \limits_{\substack{\lambda \in (0, \ubar{\alpha}^\delta + \ubar{\beta}^\delta) \\ z \in(-\ubar{\alpha}^\delta,\ubar{\beta}^\delta-\lambda)}} \left\{\lambda r - s \E^{\alpha^\delta} \left[\log\left(\frac{a+z+\lambda}{a+z}\right)\right] - t\E^{\beta^\delta} \left[\log\left(\frac{b-z}{b-z-\lambda}\right)\right] \right\} \\
&\leq  \delta \sup \limits_{\substack{\lambda \in (0, \ubar{\alpha}^\delta + \ubar{\beta}^\delta) \\ z \in(-\ubar{\alpha}^\delta,\ubar{\beta}^\delta-\lambda)}} \left\{\lambda r - s \E^{\alpha_1} \left[\log\left(\frac{a+z+\lambda}{a+z}\right)\right] - t\E^{\beta_1} \left[\log\left(\frac{b-z}{b-z-\lambda}\right)\right] \right\} \\
&+ (1-\delta) \sup \limits_{\substack{\lambda \in (0, \ubar{\alpha}^\delta + \ubar{\beta}^\delta) \\ z \in(-\ubar{\alpha}^\delta,\ubar{\beta}^\delta-\lambda)}} \left\{\lambda r - s \E^{\alpha_2} \left[\log\left(\frac{a+z+\lambda}{a+z}\right)\right] - t\E^{\beta_2} \left[\log\left(\frac{b-z}{b-z-\lambda}\right)\right] \right\} \\
&\leq  \delta \sup \limits_{\substack{\lambda \in (0, \ubar{\alpha}_1 + \ubar{\beta}_1) \\ z \in(-\ubar{\alpha}_1,\ubar{\beta}_1-\lambda)}} \left\{\lambda r - s \E^{\alpha_1} \left[\log\left(\frac{a+z+\lambda}{a+z}\right)\right] - t\E^{\beta_1} \left[\log\left(\frac{b-z}{b-z-\lambda}\right)\right] \right\} \\
&+(1-\delta) \sup \limits_{\substack{\lambda \in (0, \ubar{\alpha}_2 + \ubar{\beta}_2) \\ z \in(-\ubar{\alpha}_2,\ubar{\beta}_2-\lambda)}} \left\{\lambda r - s \E^{\alpha_2} \left[\log\left(\frac{a+z+\lambda}{a+z}\right)\right] - t\E^{\beta_2} \left[\log\left(\frac{b-z}{b-z-\lambda}\right)\right] \right\},
\end{align*}
so that $F$ is convex on $\sM(\bbR^+)^2$. Then we see from (\ref{Istdef}) that $(\alpha,\beta) \mapsto \bfI_{s,t}^{\alpha,\beta}(r)$ is convex on $\sM(\bbR^+)^2$. 
\end{proof}

\begin{proof}[Proof of Theorem \ref{thm:annealedent}]
Theorem \ref{thm:Jcd} and the variational characterization of relative entropy, \cite[Theorem 5.4]{Rassoul-AghaSeppalainen}, imply that for $r > g(s,t)$,
\begin{align*}
\bbJ_{s,t}^{\alpha,\beta}(r)= &\sup \limits_{\substack{\lambda \in (0, \ubar{\alpha} + \ubar{\beta}) \\ z \in (-\ubar{\alpha}, \ubar{\beta} - \lambda)}}\left\{\lambda r - s \log \E^\alpha \left[\frac{a+z+\lambda}{a+z}\right] - t\log\E^\beta \left[\frac{b-z}{b-z-\lambda}\right]\right\} \\
=&\sup \limits_{\substack{\lambda \in (0, \ubar{\alpha} + \ubar{\beta}) \\ z \in (-\ubar{\alpha}, \ubar{\beta} - \lambda)}}\inf \limits_{\substack{\nu_1 \in \sM^\alpha \\ \nu_2 \in \sM^\beta}}\bigg\{\lambda r - s  \E^{\nu_1} \left[\log\left(\frac{a+z+\lambda}{a+z}\right)\right] - t\E^{\nu_2} \left[\log\left(\frac{b-z}{b-z-\lambda}\right)\right] \\
&\qquad\qquad\qquad\qquad\qquad+ s\En (\nu_1 | \alpha) + t \En (\nu_2 | \beta)\bigg\}\\
&\leq \inf \limits_{\substack{\nu_1 \in \sM^\alpha \\ \nu_2 \in \sM^\beta}}\sup \limits_{\substack{\lambda \in (0, \ubar{\alpha} + \ubar{\beta}) \\ z \in (-\ubar{\alpha}, \ubar{\beta} - \lambda)}}\bigg\{\lambda r - s  \E^{\nu_1} \left[\log\left(\frac{a+z+\lambda}{a+z}\right)\right] - t\E^{\nu_2} \left[\log\left(\frac{b-z}{b-z-\lambda}\right)\right] \\
&\qquad\qquad\qquad\qquad\qquad+ s\En (\nu_1 | \alpha) + t \En (\nu_2 | \beta)\bigg\}.
\end{align*}
Note that if $\nu_1 \ll \alpha$, it must be the case that $\ubar{\nu}_1 \geq \ubar{\alpha}$ and similarly, $\ubar{\nu}_2 \geq \ubar{\beta}$. It follows that we may extend the region in the inner supremum to obtain
\begin{align*}
\bbJ_{s,t}^{\alpha,\beta}(r) &\leq \inf_{\nu_1, \nu_2}\left\{\bfI_{s,t}^{\nu_1,\nu_2}(r) + s\En (\nu_1 | \alpha) + t \En (\nu_2 | \beta) \right\}.
\end{align*}
The map $(\nu_1,\nu_2) \mapsto \bfI_{s,t}^{\nu_1,\nu_2}(r) + s\En (\nu_1 | \alpha) + t \En (\nu_2 | \beta)$ is strictly convex on the convex set $\sM^\alpha \times \sM^\beta$ so at most one minimizing pair $(\nu_1,\nu_2)$ exists. It therefore suffices to show that  we have equality with the measures $\nu_1, \nu_2$ defined in the statement of the theorem. We argue this by cases.

A maximizing pair $\lam, \zs$ satisfying $\lam \in [0, \ubar{\alpha} + \ubar{\beta}], \zs \in [-\ubar{\alpha}, \ubar{\beta} - \lam]$ exist for the annealed right-tail rate function by Lemmas \ref{L9} and \ref{L1}. ($\zs$ denotes $\zs(\lam)$ in the notation of Section \ref{Sreg}.  Also, by Corollary \ref{C1}, $\zs(\ubar{\alpha}+\ubar{\beta}) = -\ubar{\alpha}$). Note that $\lam = 0$ is impossible because $\bbJ_{s,t}^{\alpha,\beta}(r) > 0$ by Lemma \ref{L1}. If $\lam \in (0, \ubar{\alpha} + \ubar{\beta})$ and $\zs \in (-\ubar{\alpha}, \ubar{\beta} - \lam)$, then $\nu_1\in \sM^\alpha$ and $\nu_2 \in \sM^\beta$ because their densities with respect to $\alpha$ and $\beta$ are bounded. Taking derivatives in (\ref{maxpair}), we see that $\zs$ and $\lam$ solve
\begin{align}
0 &= s \E^{\nu_1}\left[\frac{1}{a+\zs} - \frac{1}{a+\zs+\lam}\right] + t \E^{\nu_2}\left[\frac{1}{b-\zs} - \frac{1}{b-\zs-\lam}\right] \label{E148}\\
0 &= r - s\E^{\nu_1}\left[\frac{1}{a+\zs+\lam}\right] - t \E^{\nu_2}\left[\frac{1}{b-\zs-\lam}\right].\label{E149}
\end{align}
These are precisely the first order conditions implying that
\begin{align*}
\bfI_{s,t}^{\nu_1,\nu_2}(r) &= \lam r - s \E^{\nu_1}\left[\log \frac{a+\zs+\lam}{a+\zs}\right] - t \E^{\nu_2}\left[\log \frac{b-\zs}{b-\zs-\lam}\right].
\end{align*}
The definition of relative entropy and a little algebra then show that
\begin{align*}
\bbJ_{s,t}^{\alpha,\beta}(r) &= \bfI_{s,t}^{\nu_1,\nu_2}(r) + s\En (\nu_1|\alpha) + t \En (\nu_2|\alpha).
\end{align*}
The remaining cases are similar in that once we know that the extremizers are the same for $\bbJ_{s,t}^{\alpha,\beta}(r)$ and $\bfI_{s,t}^{\nu_1,\nu_2}(r)$, the result follows. The necessary and sufficient conditions in Lemmas \ref{L3} and \ref{L9} show that $\nu_1$ and $\nu_2$ are well defined and that this equality continues to hold if $\lam < \ubar{\alpha} + \ubar{\beta} $ and $\zs = -\ubar{\alpha}$ or $\zs = \ubar{\beta} - \lam$. The only remaining case is $\lam = \ubar{\alpha} + \ubar{\beta}$ and $\zs = -\ubar{\alpha}$. $\lam = \ubar{\alpha} + \ubar{\beta}$ is equivalent to $r \geq (\bbL_{s,t}^{\alpha,\beta})'(\ubar{\alpha}+\ubar{\beta})$. By Corollary \ref{L10}, this condition implies that $\nu_1$ and $\nu_2$ are well defined and $(\bfL_{s,t}^{\nu_1,\nu_2})'(\ubar{\alpha}+\ubar{\beta}) =  (\bbL_{s,t}^{\alpha,\beta})'(\ubar{\alpha}+\ubar{\beta})$. The result follows.
\end{proof}
\section{Scaling estimates}
\label{S10}

In this section, we prove the scaling estimates for the quenched and the annealed rate functions. See the discussion  Section \ref{Sreg} for the notation below. If $c_1 < s/t < c_2$ we have $\partial_z g_\zeta(s, t) = 0$ and, therefore,   
\begin{align}
\label{E79}
g_z(s, t) = g(s, t) + \partial_{z}^2 g_{\zeta}(s, t)(z-\zeta)^2/2+o((z-\zeta)^2). 
\end{align}
In fact, (\ref{E79}) holds for $s/t = c_1$ and $s/t = c_2$ as well provided that 
\begin{align}
\E\left[\frac{1}{(a-\ubar{\alpha})^3}\right] < \infty,  \qquad \qquad\E\left[\frac{1}{(b-\ubar{\beta})^3}\right] < \infty; \label{E81.2}
\end{align}
that is, assuming that $\partial_z^2 g_{z}(s, t)$ has limits at the endpoints $-\ubar{\alpha}$ and $\ubar{\beta}$. 

\begin{proof}[Proof of Theorem \ref{T3}]
For $\epsilon > 0$ sufficiently small, we have 
\begin{align}
\bfI_{s, t}'(r) = \lam(r), \qquad
\bfL_{s, t}'(\lam(r)) = r \label{E81}
\end{align}
whenever $g(s, t) \le r \le g(s, t) + \epsilon$ by Lemma \ref{L1}. We begin with the case $c_1 < s/t < c_2$. Then $\zeta \in (-\ubar{\alpha}, \ubar{\beta})$. We recall $\lambda_1$ and $\lambda_2$ defined in Lemma \ref{L3}. Because $\partial_z F(-\ubar{\alpha}, 0) = \partial_z g_{-\ubar{\alpha}}(s, t) < 0$ and $\partial_z F(-\ubar{\alpha}, 0) = \partial_z g_{-\ubar{\alpha}}(s, t) > 0$, we conclude that $\lambda_1 > 0$ and $\lambda_2 > 0$. Hence, 
\begin{align*}
\zs'(\lambda) &= -\frac{\partial_{\lambda} \partial_z F(\zs, \lambda)}{\partial_z^2 F(\zs, \lambda)}
= -\frac{s\E\left[\dfrac{1}{(a+\zs)(a+\zs+\lambda)^2}\right]+t\E\left[\dfrac{1}{(b-\zs)(b-\zs-\lambda)^2}\right]}{s\E\left[\dfrac{2a+2\zs+\lambda}{(a+\zs+\lambda)^2(a+\zs)^2}\right]+t\E\left[\dfrac{2b-2\zs-\lambda}{(b-\zs-\lambda)^2(b-\zs)^2}\right]}. 
\end{align*}
for $0 < \lambda < \lambda_1 \wedge \lambda_2$. Letting $\lambda \downarrow 0$ yields 
$\zs'(0^+) = -1/2$.  
It follows that $\zs(\lambda) = \zeta -\lambda/2 + o(\lambda)$ as $\lambda \downarrow 0$. We obtain  $\bfL_{s, t}'(\lambda) = g_{\zs+\lambda}(s, t) = g(s, t)  + \partial_{z}^2 g_{\zeta}(s, t) \lambda^2/8 + o(\lambda^2)$
as $\lambda \downarrow 0$. Then,
\begin{align*}
\bfI_{s, t}'(g(s, t)+\epsilon) = \frac{2\sqrt{2}}{\sqrt{\partial_z^2 g_{\zeta}(s, t)}} \epsilon^{1/2} + o(\epsilon^{1/2}), 
\end{align*}
and integrating gives 
\begin{align}
\bfI_{s, t}(g(s, t)+\epsilon) &= \frac{4\sqrt{2} \epsilon^{3/2}}{3\sqrt{\partial_z^2 g_{\zeta}(s, t)}} + o(\epsilon^{3/2}) = \frac{4}{3}\frac{\epsilon^{3/2}}{\sqrt{s\E\left[\dfrac{1}{(a+\zeta)^3}\right] + t\E\left[\dfrac{1}{(b-\zeta)^3}\right]}} + o(\epsilon^{3/2}) \label{E83}
\end{align}
as $\epsilon \downarrow 0$.  Now, suppose that $s/t \le c_1$. Then $E[(a-\ubar{\alpha})^{-2}] < \infty$, $\zeta = -\ubar{\alpha}$ and $\zs = -\ubar{\alpha}$. Under condition (\ref{E81.2}), when $c_1 = s/t$,
$\bfL_{s, t}'(\lambda) = g_{-\ubar{\alpha}+\lambda}(s, t) = g(s, t)  + \partial_{z}^2 g_{-\ubar{\alpha}}(s, t) \lambda^2/2 + o(\lambda^2)$
and we reach (\ref{E83}) multiplied with $1/2$. If $c_1 > s/t$ then $\partial_z g_{-\ubar{\alpha}}(s, t) > 0$ and we have $
\bfL_{s, t}'(\lambda) = g_{-\ubar{\alpha}+\lambda}(s, t) = g(s, t) + \partial_{z} g_{-\ubar{\alpha}}(s, t) \lambda + o(\lambda).$  This leads to 
\begin{align*}
\bfI_{s, t}(g(s, t)+\epsilon) &= \frac{\epsilon^{2}}{2\partial_z g_{-\ubar{\alpha}}(s, t)} + o(\epsilon^{2}) = \frac{1}{2} \frac{\epsilon^{2}}{-s\E\left[\dfrac{1}{(a-\ubar{\alpha})^2}\right] + t\E\left[\dfrac{1}{(b+\ubar{\alpha})^2}\right]}  + o(\epsilon^{2}).
\end{align*}
Analysis of the case  $s/t \ge c_2$ is similar.
\end{proof}

\begin{proof}[Proof of Theorem \ref{T2}]
In the case $c_1 < s/t < c_2$, H\"older's inequality gives
\begin{align*}
\lim_{\lambda \downarrow 0} \zs'(\lambda) &= -\lim_{\lambda \downarrow 0} \frac{\partial_{\lambda}\partial_z F(\zs, \lambda)}{\partial_z^2 F(\zs, \lambda)} \\
&= -\frac{s\E\left[\dfrac{1}{(a+\zeta)^2}\right]\E\left[\dfrac{1}{a+\zeta}\right]+2t\E\left[\dfrac{1}{(b-\zeta)^3}\right]-t\E\left[\dfrac{1}{(b-\zeta)^2}\right]\E\left[\dfrac{1}{b-\zeta}\right]}{2s\E\left[\dfrac{1}{(a+\zeta)^3}\right] + 2t\E\left[\dfrac{1}{(b-\zeta)^3}\right]}\\
&\le -\frac{s\E\left[\dfrac{1}{(a+\zeta)^2}\right]\E\left[\dfrac{1}{a+\zeta}\right]+t\E\left[\dfrac{1}{(b-\zeta)^3}\right]}{2s\E\left[\dfrac{1}{(a+\zeta)^3}\right] + 2t\E\left[\dfrac{1}{(b-\zeta)^3}\right]}.
\end{align*}
Hence, $\zs(\lambda) = \zeta + c\lambda + o(\lambda),$ where $c < 0$. We have 
\begin{align}
\bbL'_{s, t}(\lambda) &= s \frac{\E \left[\dfrac{1}{a+\zs}\right]}{\E \left[\dfrac{a+\zs+\lambda}{a+\zs}\right]} + t \frac{\E \left[\dfrac{b-\zs}{(b-\zs-\lambda)^2}\right]}{\E \left[\dfrac{b-\zs}{b-\zs-\lambda}\right]} \label{E85} \\
&= g_{\zs+\lambda}(s, t) + \lambda \left(s\Var\left[\frac{1}{a+\zeta}\right] + t\Var\left[\frac{1}{b-\zeta}\right]\right) + o(\lambda) \label{E86}\\
&= g(s, t) + \lambda \left(s\Var\left[\frac{1}{a+\zeta}\right] + t\Var\left[\frac{1}{b-\zeta}\right]\right) + o(\lambda) \label{E87}. 
\end{align}
Then, arguing as in the preceding proof, we obtain 
\begin{align}
\bbJ_{s, t}(g(s, t) + \epsilon) = \frac{1}{2}\frac{\epsilon^2}{s\Var\left[\dfrac{1}{a+\zeta}\right] + t\Var\left[\dfrac{1}{b-\zeta}\right]}+o(\epsilon) \label{E88}. 
\end{align}

Now consider $s/t \le c_1$. Then $\zeta = \zs = -\ubar{\alpha}$ and (\ref{E85}) still holds. If $s/t = c_1$  subsequent arguments go through assuming (\ref{E81.2}). This condition is needed in step (\ref{E86}), which relies on (\ref{E79}) with $\zeta = -\ubar{\alpha}$. Hence, we have (\ref{E88}). If $s/t < c_1$ then the coefficient of $\lambda$ in (\ref{E87}) has an additional term $\partial_z g_{-\ubar{\alpha}}(s, t) > 0$, which leads to  
\begin{align*}
\bbJ_{s, t}(g(s, t) + \epsilon) = \frac{1}{2}\frac{\epsilon^2}{-s\E\left[\dfrac{1}{a-\ubar{\alpha}}\right]^2 + t\Var\left[\dfrac{1}{b+\ubar{\alpha}}\right]+t\E\left[\dfrac{1}{(b+\ubar{\alpha})^2}\right]}+o(\epsilon).
\end{align*} 

The case $s/t \ge c_2$ is analyzed similarly.
\end{proof}

\appendix

\section{Right tail rate functions and Lyapunov exponents} \label{A}

\begin{prop}
\label{P3}
\begin{enumerate}[(a)]
\item $\mu$-a.s., for $s, t > 0$ and $r \in \bbR$, there exists (nonrandom) $\bfJ_{s, t}(r) \in [0, \infty)$ such that 
\begin{equation}
\label{E48}
\lim \limits_{n \rightarrow \infty} -\frac{1}{n} \log \bfP_{\bfa, \bfb}(G(\lf ns \rf, \lf nt \rf) \ge nr) = \bfJ_{s, t}(r). 
\end{equation}
\item For all $s, t > 0$ and $r \in \bbR$, there exists $\bbJ_{s, t}(r) \in [0, \infty)$ such that 
\begin{equation}
\label{E48.1}
\lim \limits_{n \rightarrow \infty} -\frac{1}{n} \log \bbP(G(\lf ns \rf, \lf nt \rf) \ge nr) = \bbJ_{s, t}(r). 
\end{equation}
\item $\bfJ$ and $\bbJ$ are convex and homogeneous in $(s, t, r)$, nonincreasing in $(s, t)$ and nondecreasing in $r$.
\end{enumerate}
\end{prop}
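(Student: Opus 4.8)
The plan is to run the standard superadditive / subadditive‑ergodic argument. The structural input is superadditivity of $G$ together with conditional independence under $\qmeas$. For $m,n\in\bbN$, since $\lf(m+n)s\rf\ge\lf ms\rf+\lf ns\rf$ and $\lf(m+n)t\rf\ge\lf mt\rf+\lf nt\rf$, the rectangle $[1,\lf(m+n)s\rf]\times[1,\lf(m+n)t\rf]$ contains the disjoint sub‑rectangles $R_1=[1,\lf ms\rf]\times[1,\lf mt\rf]$ and $R_2=[\lf ms\rf+1,\lf ms\rf+\lf ns\rf]\times[\lf mt\rf+1,\lf mt\rf+\lf nt\rf]$, and concatenating an optimal path in $R_1$ with an optimal path across $R_2$ (through nonnegative intermediate weights) gives
\[
G(\lf(m+n)s\rf,\lf(m+n)t\rf)\ \ge\ G(\lf ms\rf,\lf mt\rf)+\widetilde G ,
\]
where $\widetilde G$ is the point‑to‑point last‑passage time across $R_2$. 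Under $\qmeas$ the weights in $R_1$ and $R_2$ are independent given the environment, and $\widetilde G$ has the $\bfP_{\tau_{\lf ms\rf}\bfa,\,\tau_{\lf mt\rf}\bfb}$‑law of $G(\lf ns\rf,\lf nt\rf)$.

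For part (a) I would first fix rational $s,t>0$, choose $Q\in\bbN$ with $Qs,Qt\in\bbN$, and for $r\in\bbR$ and integers $0\le j\le k$ set $Z_{j,k}=-\log\bfP_{\tau_{jQs}\bfa,\,\tau_{jQt}\bfb}(G((k-j)Qs,(k-j)Qt)\ge(k-j)Qr)\in[0,\infty]$. The displayed inequality and conditional independence make $(Z_{j,k})$ a stationary subadditive cocycle, $Z_{0,k}\le Z_{0,j}+Z_{j,k}$ and $Z_{j,k}=Z_{0,k-j}\circ(\tau_{Qs}\times\tau_{Qt})^{j}$; moreover $\tau_{Qs}\times\tau_{Qt}$ is $\mu$‑ergodic by hypothesis and $0\le Z_{0,1}\le Q\,r^{+}(a_1+b_1)$ with $\E[a+b]<\infty$. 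Kingman's subadditive ergodic theorem then yields $m^{-1}Z_{0,m}\to Q\,\bfJ_{s,t}(r):=\inf_m m^{-1}\E[Z_{0,m}]\in[0,\infty)$ $\mu$‑a.s., which is (\ref{E48}) along $n=Qm$. For part (b), in the i.i.d.\ case the two factors $\qmeas(G(\lf ms\rf,\lf mt\rf)\ge mr)$ and $\qmeas(\widetilde G\ge nr)$ depend on disjoint blocks of the independent coordinates of $\bfa$ and of $\bfb$, so applying $\E$ to their product factorizes; since $\widetilde G$ then has the $\ameas$‑law of $G(\lf ns\rf,\lf nt\rf)$, the sequence $a_n=-\log\ameas(G(\lf ns\rf,\lf nt\rf)\ge nr)$ is subadditive and Fekete's lemma gives $n^{-1}a_n\to\inf_n n^{-1}a_n=:\bbJ_{s,t}(r)$. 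In both settings finiteness follows from $\ameas(G\ge nr)\ge\E[e^{-(a+b)nr^{+}}]\ge\mu(a+b\le K)\,e^{-Knr^{+}}$ for $K$ large, which bounds the limit by a multiple of $r^{+}$.

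Part (c) I would read off the same manipulations of $G$: monotonicity of $G$ in its endpoint gives that $\bfJ_{s,t}(r)$ and $\bbJ_{s,t}(r)$ are nonincreasing in $(s,t)$ and nondecreasing in $r$; rescaling $n\mapsto cn$ gives homogeneity in $(s,t,r)$; and cutting a rectangle whose side lengths are a rational convex combination of two prescribed directions into the matching sub‑rectangles, then using superadditivity and conditional independence exactly as above, gives joint (midpoint, hence full) convexity in $(s,t,r)$. Convexity together with finiteness on a dense set forces joint continuity of $\bfJ$ and $\bbJ$ on $\{s,t>0\}\times\bbR$.

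Finally I would remove the rationality and the subsequence: given arbitrary $s,t>0$, $r\in\bbR$ and rationals $s_{-}<s<s_{+}$, $t_{-}<t<t_{+}$, $r_{-}<r<r_{+}$, monotonicity of $G$ sandwiches $\qmeas(G(\lf ns\rf,\lf nt\rf)\ge nr)$ between $\qmeas(G(\lf ns_{+}\rf,\lf nt_{+}\rf)\ge nr_{-})$ and $\qmeas(G(\lf ns_{-}\rf,\lf nt_{-}\rf)\ge nr_{+})$, and for fixed rational data it also sandwiches the value at a general $n$ between the values at the nearest smaller and larger multiples of the relevant $Q$; passing to the limit and invoking the continuity just established identifies the common limit, and the same goes for $\ameas$. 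I expect the main obstacle to be precisely this interleaving: the floor functions block a one‑shot application of Kingman's theorem in an arbitrary direction, so existence must be proved first along a sparse subsequence for rational directions and the regularity of the candidate limit established before the subsequence and the rationality can be stripped away. The only other point requiring care is to observe that $\tau_{Qs}\times\tau_{Qt}$ is one of the transformations under which $\mu$ is assumed ergodic, so that the Kingman limit is genuinely non‑random.
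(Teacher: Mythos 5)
Your proposal is correct and follows essentially the same route as the paper: Kingman's subadditive ergodic theorem applied to $-\log\bfP_{\bfa,\bfb}(G\ge nr)$ along directions where the floors are harmless (integrability from $\E[a+b]<\infty$, deterministic limit from the assumed ergodicity of the shifts), Fekete/independence of the diagonal blocks for the annealed i.i.d.\ case, and then monotone sandwiching plus homogeneity and the subadditivity-induced convexity/continuity to strip away the special subsequence, the rationality of $(s,t)$, and the countable set of $r$'s. The only cosmetic differences are that the paper works first with integer $(s,t)$ and an explicit modulus of continuity in $r$ rather than rational directions with a common denominator $Q$ and continuity deduced from convexity.
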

\begin{proof}
Fix $r \in \bbR$ and $s, t \in \bbN$. For integers $0 \le m < n$, define 
\[X_{m, n} = -\log \bfP_{\tau_{ms}(\bfa), \tau_{mt}(\bfb)}(G((n-m)s, (n-m)t) \ge (n-m)r).\]
We verify that $\{X_{m, n}\}$ satisfy the hypotheses of the subadditive ergodic theorem in \cite{Liggett}. For subadditiviy, note that 
\begin{align*}
X_{0, n} &= -\log \bfP_{\bfa, \bfb}(G(ns, nt) \ge nr) \\
&\le -\log \bfP_{\bfa, \bfb}(G(ms, mt) \ge mr) - \log \bfP_{\bfa, \bfb}(G((n-m)s, (n-m)t) \circ \theta_{ms, mt} \ge (n-m)r)\\
&= X_{0, m} + X_{m, n}. 
\end{align*}
For $k \in \bbN$, by the ergodicity assumptions on $\mu$, the sequence $(X_{k, k+n})_{n \in \bbN}$ has the same distribution as $(X_{0, n})_{n \in \bbN}$ and the sequence 
$(X_{(n-1)k, nk})_{n \in \bbN}$ is ergodic. Moreover, $X_{0, n} \ge 0$ and 
\begin{align}
\E X_{0, n} &\le \E \left[-\log \bfP_{\bfa, \bfb}(W(1, 1) \ge nr)\right] = n \max\{r, 0\} \E[a + b] < \infty. \label{E51}  
\end{align}  
Hence, by the subadditive ergodic theorem, (\ref{E48}) holds $\mu$-a.s. (and in expectation under $\mu$) with 
\begin{align}
\bfJ_{s, t}(r) &= \lim \limits_{n \rightarrow \infty} \frac{1}{n} \E X_{0, n} = \lim \limits_{n \rightarrow \infty}-\frac{1}{n} \E \log \bfP_{\bfa, \bfb}(G(ns, nt) \ge nr). \label{E50}
\end{align}

We record some properties of $\bfJ_{s, t}(r)$ for $s, t \in \bbN$ and $r \in \bbR$. It is clear from (\ref{E50}) that $\bfJ_{s, t}(r)$ is nonincreasing in $(s, t)$ and nondecreasing in $r$. In addition, $\bfJ_{s, t}(r) = 0$ for $r \le 0$ as $G$ is nonnegative, and $\bfJ_{cs, ct}(cr) = c\bfJ_{s, t}(r)$
for $c \in \bbN$. 
By (\ref{E51}), 
$\bfJ_{s, t}(r) \le r\E[a+b] < \infty$ for $r \ge 0$. Also, for $s_1, s_2, t_1, t_2 \in \bbN$ and $r_1, r_2 \in \bbR$, we have 
\begin{align*}\E \log \bfP_{\bfa, \bfb}(G(n(s_1+s_2), n(t_1+t_2)) \ge n(r_1+r_2)) \ge &\E \log \bfP_{\bfa, \bfb}(G(ns_1, nt_1) \ge nr_1)\\
\cdot &\E \log \bfP_{\bfa, \bfb}(G(ns_2, nt_2) \ge nr_2)
\end{align*}
for $n \in \bbN$, which gives 
$\bfJ_{s_1+s_2, t_1+t_2}(r_1+r_2) \ge \bfJ_{s_1, t_1}(r_1) + \bfJ_{s_2, t_2}(r_2)$.
Then, for $0 \le r \le r' \le r+\frac{1}{n}$,  
\begin{align}
\bfJ_{s, t}(r')-\bfJ_{s, t}(r) &\le \bfJ_{s, t}(r+1/n)-\bfJ_{s, t}(r) \nonumber \\
&= \frac{1}{n+1}\bfJ_{(n+1)s, (n+1)t}(nr+r+1+1/n)-\bfJ_{s, t}(r) \nonumber  \\
&\le \frac{\bfJ_{ns, nt}(nr)}{n+1} - \bfJ_{s, t}(r) + \frac{\bfJ_{s, t}(r+2)}{n+1} \nonumber  \\
&= \frac{\bfJ_{s, t}(r+2)-\bfJ_{s, t}(r)}{n+1} \nonumber \\
&\le \frac{2r+2}{n}\E[a+b] \label{E53}, 
\end{align}
which shows continuity of $\bfJ_{s, t}(r)$ in $r$.  

There exists a $\mu$-a.s. event $E$ on which (\ref{E48}) holds for all $s, t \in \bbN$ and $r \in \bbQ$. It follows from the monotonicity of $\log \bfP_{\bfa, \bfb}(G(ns, nt) \ge nr)$ in $r$ and continuity of $\bfJ_{s, t}$ that (\ref{E48}) holds for all $s, t \in \bbN$ and $r \in \bbR$ on $E$. From now on, let us work with $(\bfa, \bfb) \in E$. 

For $c > 0$, $\delta \in (0, 1)$ and large enough $n \in \bbN$, we have 
\begin{equation}
\label{E54}
\begin{aligned}
-\log \bfP_{\bfa, \bfb}(G(\lf ncs \rf, \lf nct \rf) \ge nr) &\le -\log \bfP_{\bfa, \bfb}(G(\lf cn \rf s, \lf cn \rf t) \ge \lf cn \rf r(1+\delta)) \\
-\log \bfP_{\bfa, \bfb}(G(\lf ncs \rf, \lf nct \rf) \ge nr) &\ge -\log \bfP_{\bfa, \bfb}(G(\lc cn \rc s, \lc cn \rc t) \ge \lc cn \rc r(1-\delta)). 
\end{aligned}
\end{equation}
It follows from these inequalities and continuity of $\bfJ_{s, t}$ that (\ref{E48}) holds on $E$ with $\bfJ_{cs, ct}(cr) = c\bfJ_{s, t}(r)$. In particular, $\bfJ_{s, t}(r)$ exists for rational $s, t > 0$. Moreover, by homogeneity, the properties of $\bfJ_{s, t}(r)$ noted in preceding paragraph hold for rational $s, t > 0$ as well. 

For $s, t, \delta > 0$, choose rational $s', t'$ such that $\frac{s'}{1+\delta} < s \le s'$ and $\frac{t'}{1+\delta} < t \le t'$. Then 
\begin{equation}
\label{E55}
\begin{aligned}
-\log \bfP_{\bfa, \bfb}(G(\lf ns \rf, \lf nt \rf) \ge nr) &\ge -\log \bfP_{\bfa, \bfb}(G(\lf ns' \rf, \lf nt' \rf) \ge nr) \\
-\log \bfP_{\bfa, \bfb}(G(\lf ns \rf, \lf nt \rf) \ge nr) &\le -\log \bfP_{\bfa, \bfb}(G(\lf ns'/(1+\delta) \rf, \lf nt'/(1+\delta) \rf ) \ge nr).
\end{aligned}
\end{equation}
It follows that 
\begin{align*}
\liminf \limits_{n \rightarrow \infty} -\frac{1}{n} \log \bfP_{\bfa, \bfb}(G(\lf ns \rf, \lf nt \rf) \ge nr) &\ge \bfJ_{s', t'}(r) \\
\limsup \limits_{n \rightarrow \infty} -\frac{1}{n} \log \bfP_{\bfa, \bfb}(G(\lf ns \rf, \lf nt \rf) \ge nr) &\le \bfJ_{s'/(1+\delta), t'/(1+\delta)}(r) = \bfJ_{s', t'}((1+\delta)r)/(1+\delta). 
\end{align*}
Using (\ref{E53}), we obtain 
\begin{align*}
\frac{\bfJ_{s', t'}((1+\delta)r)}{1+\delta}-\bfJ_{s', t'}(r) &\le \bfJ_{s', t'}((1+\delta)r)-\bfJ_{s', t'}(r) \le \frac{2r+2}{\lc (r\delta)^{-1}\rc} \E[a+b]. 
\end{align*}
As $\delta \downarrow 0$, we have $s' \downarrow s$ and $t' \downarrow t$. Hence, we conclude that $\bfJ_{s, t}(r)$ exists and equals the limit of $\bfJ_{s', t'}(r)$, and also enjoys the properties of mentioned above. Finally, it follows from subadditivity and homogeneity that $\bfJ$ is convex. 
\end{proof}

\begin{prop}
\label{P1}\ 

\begin{enumerate}[(a)]
\item $\mu$-a.s.,  for any $s, t > 0$ and $\lambda \in \bbR$, there exists $\bfL_{s, t}(\lambda) \in [-\infty, \infty]$ such that,   
\begin{equation}\label{E25}
\lim \limits_{n \rightarrow \infty} \frac{1}{n} \log \bfE_{\bfa, \bfb}[e^{\lambda G(\lf ns \rf, \lf nt \rf)}] = \bfL_{s, t}(\lambda)
\end{equation}
\item For any $s, t > 0$ and $\lambda \in \bbR$, 
\begin{equation}\label{E25.1}
\lim \limits_{n \rightarrow \infty} \frac{1}{n} \log \bbE[e^{\lambda G(\lf ns \rf, \lf nt \rf)}] = \bbL_{s, t}(\lambda) 
\end{equation}
\item $\bfL_{s, t}(\lambda)$ and $\bbL_{s, t}(\lambda)$ are nondecreasing and convex in $\lambda$. 
\item $\lambda \bfL_{s, t}(\lambda)$ and $\lambda \bbL_{s, t}(\lambda)$ are nondecreasing, homogeneous and concave in $(s, t)$.  
\end{enumerate}
\end{prop}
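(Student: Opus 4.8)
The plan is to prove parts (a) and (b) by the same subadditivity scheme used for Proposition \ref{P3} — a subadditive ergodic theorem in the quenched case and Fekete's lemma in the annealed case — and then to read off (c) and (d) from superadditivity of the last-passage times together with convexity of log-moment generating functions. Throughout, the sign of $\lambda$ has to be tracked because exponentiation flips the superadditivity of $G$: for $\lambda\ge 0$ one works with supermultiplicative MGFs, for $\lambda<0$ with submultiplicative ones, and $\lambda=0$ is trivial.

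For part (a), fix $\lambda \ge 0$ and $s,t \in \bbN$ and for integers $0 \le m < n$ put
\[
Y_{m,n} \;=\; -\log \bfE_{\tau_{ms}(\bfa),\,\tau_{mt}(\bfb)}\!\left[e^{\lambda G((n-m)s,\,(n-m)t)}\right] \;\in\; [-\infty, 0],
\]
the inclusion because $G\ge 0$. Superadditivity of $G$ under path concatenation, the sign condition $\lambda\ge 0$, and the independence of weights on disjoint blocks under the product measure $\bfP_{\bfa, \bfb}$ give $Y_{0,n}\le Y_{0,m}+Y_{m,n}$; stationarity and separate ergodicity of $\mu$ give that $(Y_{k,k+j})_j$ is distributed as $(Y_{0,j})_j$ and that $(Y_{(n-1)k,nk})_n$ is ergodic, exactly as in Proposition \ref{P3}. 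Since $Y_{0,1}\le 0$ we have $\E[Y_{0,1}^{+}]<\infty$, so the form of Kingman's subadditive ergodic theorem in \cite{Liggett} that tolerates a limiting constant in $[-\infty,0]$ applies and gives $\mu$-a.s.\ convergence of $n^{-1}\log\bfE_{\bfa,\bfb}[e^{\lambda G(ns,nt)}]$ to a deterministic $\bfL_{s,t}(\lambda)\in[0,\infty]$. For $\lambda<0$ I would instead take $Y_{m,n}=\log\bfE_{\tau_{ms}(\bfa),\tau_{mt}(\bfb)}[e^{\lambda G((n-m)s,(n-m)t)}]\le 0$, which is \emph{subadditive}, and here $\E[Y_{0,1}]>-\infty$ since $\bfE_{\bfa,\bfb}G(ns,nt)$ grows at most linearly (using $\ubar{\alpha}+\ubar{\beta}>0$), so the limit is finite. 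To pass from $s,t\in\bbN$, $\lambda\in\bbQ$ to general $s,t>0$, $\lambda\in\bbR$ I would fix a $\mu$-full event carrying the convergence for all $(s,t,\lambda)\in\bbN^2\times\bbQ$, extend to rational $(s,t)$ by homogeneity (reading the limit along the subsequence $\{cn\}_n$), then to real $(s,t)$ by monotonicity of $\bfE_{\bfa,\bfb}[e^{\lambda G(\lf ns\rf,\lf nt\rf)}]$ in $(s,t)$ together with the concavity from (d), and finally to all $\lambda$ by convexity and monotonicity of the prelimit functions in $\lambda$ (and their left-continuity by monotone convergence); the details are identical to the last paragraphs of the proof of Proposition \ref{P3}.

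Part (b) carries no almost-sure statement, so I would apply Fekete's subadditive lemma directly to the deterministic sequence $c_n=\log\bbE[e^{\lambda G(ns,nt)}]$: for $\lambda\ge 0$, superadditivity of $G$, conditional independence of disjoint blocks given $(\bfa,\bfb)$, and independence of the parameters in disjoint index ranges (this is where the i.i.d.\ hypothesis on $\bfa,\bfb$ enters) give $\bbE[e^{\lambda G((m+n)s,(m+n)t)}]\ge\bbE[e^{\lambda G(ms,mt)}]\,\bbE[e^{\lambda G(ns,nt)}]$, i.e.\ $c_{m+n}\ge c_m+c_n$; for $\lambda<0$ the inequality reverses, and Fekete gives $c_n/n\to\bbL_{s,t}(\lambda)$, with the extension in $(s,t)$ and $\lambda$ as before. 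For (c): each prelimit $\lambda\mapsto n^{-1}\log\bfE_{\bfa,\bfb}[e^{\lambda G(\lf ns\rf,\lf nt\rf)}]$ is convex (a rescaled log-MGF) and nondecreasing (since $G\ge 0$), and these properties pass to the pointwise limit $\bfL_{s,t}(\cdot)$, and likewise to $\bbL_{s,t}(\cdot)$. For (d): homogeneity $\bfL_{cs,ct}(\lambda)=c\,\bfL_{s,t}(\lambda)$ comes from reading the limit along $\{cn\}_n$; when $\lambda>0$, concatenating paths across integer blocks and using independence makes $\bfE_{\bfa,\bfb}[e^{\lambda G}]$ supermultiplicative, hence $\bfL$ superadditive in $(s,t)$, and multiplying by $\lambda>0$ and combining with homogeneity yields concavity of $\lambda\bfL_{s,t}(\lambda)$; when $\lambda<0$ the same computation makes $\bfL$ subadditive in $(s,t)$, and multiplying by $\lambda<0$ again gives concavity. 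Monotonicity of $\lambda\bfL_{s,t}(\lambda)$ in $(s,t)$ is analogous: $G$ is nondecreasing in $(m,n)$, so $\bfL$ is nondecreasing in $(s,t)$ for $\lambda>0$ and nonincreasing for $\lambda<0$, and either way $\lambda\bfL_{s,t}(\lambda)$ is nondecreasing. The annealed statements follow the same way, using super/submultiplicativity of $\bbE[e^{\lambda G}]$ over disjoint blocks.

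The main obstacle is bookkeeping rather than conceptual: the limits can be $\pm\infty$, so one must use the version of the subadditive ergodic theorem that allows an infinite constant and check that the convexity/monotonicity extension arguments stay valid at the values of $\lambda$ where $\bfL_{s,t}$ (or $\bbL_{s,t}$) jumps from finite to $+\infty$; and in part (b) the supermultiplicativity genuinely requires the parameter sequences to be independent, so stationarity alone would not let one factor the annealed expectation. Both points are handled exactly as the corresponding steps in Proposition \ref{P3}.
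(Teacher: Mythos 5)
Your proposal is correct and takes essentially the same route as the paper: the subadditive ergodic theorem applied to the (signed) logarithm of the quenched moment generating function for integer $(s,t)$, extension to all $(s,t,\lambda)$ via homogeneity, monotonicity of $G$ and convexity/continuity in $\lambda$, with the annealed case handled by super/submultiplicativity of $\bbE[e^{\lambda G}]$ (using the i.i.d.\ parameter assumption) exactly as one would via Fekete. The only cosmetic difference is that you split into the cases $\lambda \ge 0$ and $\lambda < 0$, whereas the paper treats both signs at once by working with $X_{m,n} = -\lambda \log \bfE_{\tau_{ms}(\bfa),\tau_{mt}(\bfb)}\left[e^{\lambda G((n-m)s,(n-m)t)}\right]$, which is nonpositive and subadditive for every $\lambda$.
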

\begin{proof}
Fix $\lambda \in \bbR$ and $s, t \in \bbN$. Define 
\[X_{m, n} = -\lambda \log \bfE_{\tau_{ms}(\bfa), \tau_{mt}(\bfb)}\left[e^{\lambda G((n-m)s, (n-m)t)}\right]\]
for integers $0 \le m < n$. Then $\{X_{m, n}: 0 \le m < n\}$ are nonpositive and subadditive, and the conditions of the subadditive ergodic theorem are in place to claim the existence of $\bfL_{s, t}(\lambda) \in [-\infty, \infty]$ such that (\ref{E25}) holds $\mu$-a.s. 

For $\lambda \in \bbR, s, t \in \bbN$ and $c > 0$, we have 
\begin{align*}
-\lambda \log \bfE_{\bfa, \bfb}\left[e^{\lambda G(\lc nc \rc s, \lc nc \rc t)}\right] \le -\lambda \log \bfE_{\bfa, \bfb}\left[e^{\lambda G(\lf ncs \rf, \lf nct \rf)}\right] &\le -\lambda \log \bfE_{\bfa, \bfb}\left[e^{\lambda G(\lf nc \rf s, \lf nc \rf t)}\right] 
\end{align*}
Also, for $\lambda \in \bbR$, $s, s', t, t', \delta > 0$ such that $s', t'$ are rational, $\frac{s'}{1+\delta} < s \le s'$ and $\frac{t'}{1+\delta} < t \le t'$, 
\begin{align*}
-\lambda \log \bfE_{\bfa, \bfb}\left[e^{\lambda G(\lf ns' \rf, \lf nt' \rf)}\right] \le -\lambda \log \bfE_{\bfa, \bfb}\left[e^{\lambda G(\lf ns \rf, \lf nt \rf)}\right] &\le -\lambda \log \bfE_{\bfa, \bfb}\left[e^{\lambda G(\lf \frac{ns'}{1+\delta}\rf, \lf \frac{nt'}{1+\delta}\rf)} \right].
\end{align*}
Using these inequalities as in the preceding proof, we obtain (\ref{E25}) for all $s,t > 0$ $\mu$-a.s. and the claimed properties of the function $(s, t) \mapsto \lambda \bfL_{s, t}(\lambda)$. 

Now fix $s, t > 0$. Note that $\bfL_{s, t}(\lambda)$ is nondecreasing in $\lambda$. Let $\lambda_0 = \sup_{\lambda \in \bbR} \{\bfL_{s, t}(\lambda) < \infty\}$. For $\lambda_1, \lambda_2 \in \bbR$ and $c_1, c_2 \in (0, 1)$ with $c_1 + c_2 = 1$, by H\"{o}lder's inequality,  
\[\log \bfE_{\bfa, \bfb}\left[e^{(c_1 \lambda_1 + c_2 \lambda_2)G(\lf ns \rf, \lf nt \rf)}\right] \le c_1 \log \bfE_{\bfa, \bfb}\left[e^{\lambda_1G(\lf ns \rf, \lf nt \rf)}\right] + c_2 \log \bfE_{\bfa, \bfb}\left[e^{\lambda_2G(\lf ns \rf, \lf nt \rf)}\right], \]
which implies that $\bfL_{s, t}(c_1 \lambda_1 + c_2 \lambda_2) \le c_1 \bfL_{s, t}(\lambda_1) + c_2 \bfL_{s, t}(\lambda_2)$. Hence, $\bfL_{s, t}(\lambda)$ is continuous in $\lambda$ on $(-\infty, \lambda_0)$.  Using this and the monotonicity of last-passage times, we deduce that (\ref{E25}) holds for all $s, t > 0$ and $\lambda \in \bbR$ $\mu$-a.s. 
\end{proof}

\bibliographystyle{habbrv}
\bibliography{CGM_LDP}

\end{document}